\author{Daniel Thompson}
\newtheorem{theorem}{Theorem}[section]
\newtheorem{corollary}[theorem]{Corollary}
\newtheorem{lemma}[theorem]{Lemma}
\theoremstyle{definition}
\newtheorem{remark}[theorem]{Remark}
\newtheorem{example}[theorem]{Example}
\begin{document}

\title{A generalisation of Dickson's commutative division algebras}

\begin{abstract}
Dickson's commutative semifields are an important class of finite division algebras. We generalise Dickson's construction of commutative division algebras by doubling both finite field extensions and central simple algebras and not restricting us to the classical setup where a cyclic field extension is taken. The latter case yields algebras which are no longer commutative nor associative. Conditions for when the algebras are division algebras are given that canonically generalise the classical ones known up to now. We investigate when we obtain non-isomorphic algebras and compute all the automorphisms, including the structure of the automorphism group in some cases.
\end{abstract}

\maketitle

\section*{Introduction}

Dickson's commutative division algebras \cite{Dic} have been widely studied over finite fields as they yield proper finite semifields of even dimension: For any choice of $c\in K\setminus K^2$ and $\sigma\in Aut_F(K)$ not equal to the identity, $K\oplus K$ equipped with the multiplication $$(u,v)(x,y)=(ux+c\sigma(vy),ux+vy)$$ is a division algebra over $F$ when $F$ is a finite field. This construction was investigated more generally in two papers by Burmester where $K$ is a cyclic field extension of degree $n$ over a field of characteristic not 2 \cite{Bur, Bur2}, producing $2n$-dimensional unital algebras over $F$. Further, Dickson \cite{Dic} and Burmester gave a necessary and sufficient condition for when the algebras constructed this way are division.\\
Dickson's commutative division algebras also appear as a special case of a family of finite semifields constructed by Knuth \cite{Knu}: A subfield $L$ of a semifield $S$ is called a \textit{weak nucleus} if $x(yz)-(xy)z=0$, whenever two of $x,y,z$ lie in $L$. Knuth produced conditions to determine all isotopism classes of finite semifields which are quadratic over their weak nucleus; Dickson's semifields are the only commutative semifields of this type. Isotopism classes of commutative Dickson semifields were also treated in Burmester's paper \cite{Bur2} and more recently in some work by Hui, Tai and Wong \cite{HTW}.\\
In this paper, we generalise Dickson's doubling process by first doubling a finite (not necessarily cyclic) field extension $K/F$ and then a central simple associative algebra $B$ over $F$. As $B$ is not commutative, in this last setup we have more options for a possible generalisation of the multiplication given in Dickson's construction. For instance, we may define the multiplication on the $F$-vector space $B\oplus B$ as $$(u,v)(x,y)=(ux+c\sigma(vy),uy+vx)$$ for some $c\in B^{\times}$ and non-trivial $\sigma\in Aut_F(B)$, but we can also define a multiplication by putting $c$ in the middle, i.e. $$(u,v)(x,y)=(ux+\sigma(v)c\sigma(y),uy+vx),$$ or by putting $c$ on the right-hand side:  $$(u,v)(x,y)=(ux+\sigma(vy)c,uy+vx).$$ Clearly, the unital $F$-algebras we obtain this way are no longer commutative.\\
After preliminary results and definitions in Section 1, the doubling of a finite field extension is investigated in Section 2. We find multiple conditions for when we obtain division algebras this way and consider when our algebras are isomorphic. We also examine their automorphisms and determine their automorphism groups. Section 3 looks at what happens when we construct algebras starting with a central simple algebra $B$ over $F$ and employs several canonical generalisations of Dickson's doubling process. Again we examine both isomorphisms and automorphisms of these algebras and determine the size of their automorphism groups. Most importantly, we investigate when the algebras we obtained this way are division algebras. The results of this paper are part of the author's PhD thesis written under the supervision of Dr S. Pumpl\"{u}n.

\section{Definitions and preliminary results}

In the following, let $F$ be a field. We will define an $F$-algebra $A$ as a finite dimensional $F$-vector space equipped with a (not necessarily associative) bilinear map $A\times A\to A$ which is the multiplication of the algebra. $A$ is a \textit{division algebra} if for all nonzero $a\in A$ the maps $L_a:A\to A$, $x\mapsto ax$, and $R_a:A\to A$, $x\mapsto xa$, are bijective maps. As $A$ is finite dimensional, $A$ is a division algebra if and only if there are no zero divisors \cite{Sch}.\\
The \textit{associator} of $x,y,z\in A$ is defined to be $[x,y,z]:=(xy)z-x(yz).$ Define the \textit{left, middle and right nuclei} of $A$ as $Nuc_l(A):=\lbrace x\in A \mid [x,A,A]=0\rbrace,$ $Nuc_m(A):=\lbrace x\in A \mid [A,x,A]=0\rbrace,$ and $Nuc_r(A):=\lbrace x\in A \mid [A,A,x]=0\rbrace.$ The left, middle and right nuclei are associative subalgebras of $A$. Their intersection $Nuc(A):=\lbrace x\in A \mid [x,A,A]=[A,x,A]=[A,A,x]=0\rbrace$ is the \textit{nucleus} of $A$. The \textit{commuter} of $A$ is the set of elements which commute with every other element, $Comm(A):=\lbrace x\in A\mid xy=yx \:\forall y\in A\rbrace.$ The \textit{center} of $A$ is given by the intersection of $Nuc(A)$ and $Comm(A)$, $Z(A):=\lbrace x\in Nuc(A)\mid xy=yx\: \forall y\in A\rbrace.$ For two algebras $A$ and $B$, any isomorphism $f:A\to B$ maps $Nuc(A)$ isomorphically onto $Nuc(B)$. An algebra $A$ is \textit{unital} if there exists an element $1_A\in A$ such that $x1_A=1_Ax=x$ for all $x\in A$.\\
% We say $u$ is a \textit{left unit} if $ux=x$ for all $x\in A$ and a \textit{right unit} if $xu=x$ for all $x\in A$.\\
%An \textit{involution} on an algebra $A$ over field $F$ is a map $\sigma: A\rightarrow A$ such that \begin{itemize}
%\item $\sigma(x+y)=\sigma(x)+\sigma(y)$,
%\item $\sigma(xy)=\sigma(y)\sigma(x)$,
%\item $\sigma^2(x)=x$,
%\end{itemize}
%for all $x,y\in A$. Restricting $\sigma$ to $F$ gives an automorphism of $F$ which is either the identity or of order 2. Then $\sigma$ is called an involution \textit{of the first kind} or \textit{of the second kind}, respectively.\\
A \textit{form of degree d} over $F$ is a map $N:A\to F$ such that $N(ax)=a^dN(x)$ for all $a\in F$, $x\in A$ and such that the map $\theta:A\times...\times A\to F$ defined by $$\theta(x_1,x_2,...,x_d)=\frac{1}{d!}\sum_{1\leq i_1<...<i_l\leq d}(-1)^{d-1}N(x_{i_1}+...+x_{i_l})$$ ($1\leq l\leq d$) is a $d$-linear form over $F$. A \textit{d-linear form} over $F$ is an $F$-multilinear map $\theta:A\times...\times A\to F$ ($d$ copies) such that $\theta(x_1,x_2,...,x_d)$ is invariant under all permutations of its variables. A form $N:A\to F$ of degree $d$ is called \textit{multiplicative} if $N(xy)=N(x)N(y)$ for all $x,y\in A$ and \textit{nondegenerate} if we have $N(x)=0$ if and only if $x=0$. Note that if $N:A\to F$ is a nondegenerate multiplicative form and $A$ is a unital algebra, it follows that $N(1_A)=1_F$. Every central simple algebra of degree $d$ admits a uniquely determined nondegenerate multiplicative form of degree $d$, called the \textit{norm} of the algebra.\\

\section{Commutative Dickson algebras over any base field}\label{GeneralisingDicksonAlgebras}

\subsection{The construction process} Let $K$ be a finite field extension of $F$ of degree $n$. For some $c\in K^{\times}$ and $\sigma\in Aut_F(K)$, we define a multiplication on the $F$-vector space $K\oplus K$ by $$(u,v)(x,y)=(ux+c\sigma(vy),uy+vx)$$ for all $u,v,x,y\in K$. This makes $K\oplus K$ a unital nonassociative ring which we denote by $D(K,\sigma,c)$. Note that $D(K,id,c)$ is isomorphic to a quadratic field extension of $K$ when $c\in K\setminus K^2$ and that $D(K,id,c)\cong K\times K$ when $c\in K^{\times 2}$. Due to this, we will only consider $\sigma\neq id$. Note that $F$ is canonically embedded into $D(K,\sigma,c)$ via the map $F\mapsto F\oplus 0$. Similarly, we will denote any subalgebras of the form $E\oplus 0$ simply by $E$.

Clearly $D=D(K,\sigma,c)$ is commutative. Over finite fields, it is known that when $\sigma\neq id$, then $Nuc_l(D)=Nuc_r(D)=Fix(\sigma)$ and $Nuc_m(D)=K$ \cite[p.126]{Cor}. This is also true for any arbitrary field and is easily checked:

\begin{theorem}\label{CDSNuclei}
Let $D=D(K,\sigma,c)$ with $\sigma\in Aut_F(K)$ a non-trivial automorphism. Then we have $Nuc_l(D)=Nuc_r(D)=Fix(\sigma)$ and $Nuc_m(D)=K$. In particular, this yields $Nuc(D)=Fix(\sigma)$ and $Z(D)=Fix(\sigma)$.
\end{theorem}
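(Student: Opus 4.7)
The plan is to establish the two core equalities $Nuc_l(D)=Fix(\sigma)$ and $Nuc_m(D)=K$; commutativity of $D$ then immediately gives $Nuc_r(D)=Nuc_l(D)$, and since $Fix(\sigma)\subseteq K$ and $Comm(D)=D$, the assertions $Nuc(D)=Z(D)=Fix(\sigma)$ follow from the definitions of nucleus and center. It is convenient to write elements of $D$ in the form $u+v\ell$ with $\ell=(0,1)$, which satisfies $\ell^2=c$, $u\ell=\ell u$, and $(v\ell)(y\ell)=c\sigma(vy)$; every associator in $D$ then reduces to a short calculation in $K$ combined with an application of $\sigma$.

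For the containment $Fix(\sigma)\subseteq Nuc_l(D)$, if $a\in Fix(\sigma)$ then $\sigma(az)=a\sigma(z)$ allows the scalar $a$ to be pulled through the $c\sigma$-term appearing in the product, so a direct expansion shows $(a(x+y\ell))(x'+y'\ell)=a((x+y\ell)(x'+y'\ell))$. For $K\subseteq Nuc_m(D)$, an element $k\in K$ sits in the first component and interacts with the factors only through ordinary multiplication in $K$; expanding $((u+v\ell)k)(x+y\ell)$ and $(u+v\ell)(k(x+y\ell))$ directly yields the same expression.

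For the reverse inclusions I would compute two targeted associators. Writing a candidate nuclear element as $a+b\ell$, the associator $[a+b\ell,\ell,x]$ for $x\in K$ equals $c\bigl(bx-\sigma(b)\sigma(x)\bigr)$; taking $x=1$ forces $b\in Fix(\sigma)$, and then choosing any $x\in K\setminus Fix(\sigma)$, which exists because $\sigma\neq id$, forces $b=0$. The associator $[a,z\ell,w\ell]$ then equals $c(\sigma(a)-a)\sigma(zw)$, so with $zw\neq 0$ we must have $a\in Fix(\sigma)$; this proves $Nuc_l(D)\subseteq Fix(\sigma)$. For the middle nucleus, computing $[x,a+b\ell,w\ell]$ yields $c\sigma(bw)(\sigma(x)-x)$; picking $x\in K\setminus Fix(\sigma)$ and $w$ with $bw\neq 0$ again forces $b=0$, so $Nuc_m(D)\subseteq K$.

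There is no serious obstacle here beyond careful bookkeeping of which factors pick up a $\sigma$ and which do not; the recurring pattern is that each associator is a scalar multiple of $(\sigma-id)$ applied to a suitably chosen element of $K$, and the hypothesis $\sigma\neq id$ guarantees the existence of elements in $K\setminus Fix(\sigma)$ which witness non-vanishing. The mildest point of care is making sure that, after the first associator computation fixes $b\in Fix(\sigma)$, a genuinely different associator is used to exclude nonzero $b$ and nonzero $a\notin Fix(\sigma)$; the choices above do this efficiently.
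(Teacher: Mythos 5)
Your overall strategy is sound and supplies exactly the ``easy check'' the paper omits (the paper cites the finite-field case and leaves the general verification to the reader). The logical structure is right: commutativity gives $Nuc_r=Nuc_l$ and $Comm(D)=D$, so the whole theorem reduces to the two equalities $Nuc_l(D)=Fix(\sigma)$ and $Nuc_m(D)=K$; both containments $Fix(\sigma)\subseteq Nuc_l(D)$ and $K\subseteq Nuc_m(D)$ are direct expansions, and the reverse inclusions follow from well-chosen associators. Your second and third associator formulas, $[a,z\ell,w\ell]=c(\sigma(a)-a)\sigma(zw)$ and $[x,a+b\ell,w\ell]=c\sigma(bw)(\sigma(x)-x)$, check out.

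There is, however, a computational slip in the first targeted associator. Writing $\ell=(0,1)$, one has $(a+b\ell)\ell=(a,b)(0,1)=(c\sigma(b),a)$, so
\[
[a+b\ell,\ell,x]=\bigl(c\sigma(b)x-c\sigma(b)\sigma(x),\,0\bigr)=c\sigma(b)\bigl(x-\sigma(x)\bigr),
\]
not $c\bigl(bx-\sigma(b)\sigma(x)\bigr)$ as you wrote. The culprit is that $(b\ell)\ell=c\sigma(b)$ rather than $bc$ --- exactly the non-associativity you are trying to detect. As a consequence, the intermediate step ``taking $x=1$ forces $b\in Fix(\sigma)$'' is vacuous (the correct expression is $c\sigma(b)(1-\sigma(1))=0$ identically). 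This does not damage the proof: choosing any $x\in K\setminus Fix(\sigma)$ in the corrected formula already forces $\sigma(b)=0$, hence $b=0$, in a single step, and the remainder of your argument goes through unchanged. You should simply delete the $x=1$ step and replace the displayed associator with $c\sigma(b)(x-\sigma(x))$.
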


A nonassociative ring is always an algebra over its centre, so $D(K,\sigma,c)$ is an algebra over $Fix(\sigma)$. However, as $F\subset Fix(\sigma)$ we can also view $D(K,\sigma,c)$ as an algebra over $F$ of dimension $2n$.

Clearly all subfields $E$ of $K$ are subalgebras of $D(K,\sigma, c)$. Additionally, if $E$ be a subfield of $K$ such that $c\in E^{\times}$ and $\sigma\!\mid_E \,\in Aut_F(E)$, then $D(E,\sigma\!\mid_E,c)$ is a subalgebra of $D(K,\sigma,c)$. Moreover, if $L=Fix(\sigma)$ and $c\in L^{\times}$, then $L\oplus L$ is an associative subalgebra of $D(K,\sigma,c)$.\\

\subsection{Division algebras}
Dickson \cite{Dic} gave a sufficient condition for $D(K,\sigma,c)$ to be a nonassociative division algebra when $F$ is an infinite field and $K/F$ is a cyclic extension. Burmester further showed this was also a necessary condition \cite[Theorem 1]{Bur}. If we assume $K/Fix(\sigma)$ is cyclic, \cite[Theorem 1]{Bur} extends naturally to our construction: 

\begin{theorem}
Let $F$ be an infinite field and $L=Fix(\sigma)$. If $Aut_L(K)=\langle\sigma\rangle$, then $D(K,\sigma, c)$ is a division algebra over $F$ if and only if $N_{K/L}(c)\neq N_{K/L}(a^2)$ for all $a\in K$.
\end{theorem}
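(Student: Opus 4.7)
My plan is to use the criterion that the finite-dimensional algebra $D=D(K,\sigma,c)$ is a division algebra if and only if it has no zero divisors, and then convert the existence of a zero divisor into a norm equation on $K/L$ that will coincide precisely with the negation of the stated inequality. Suppose $(u,v)(x,y)=0$ with both factors nonzero. The multiplication law gives the coupled system
\[
ux + c\,\sigma(vy) = 0, \qquad uy + vx = 0.
\]
First I would run a short case analysis: assuming any one of $u,v,x,y$ vanishes and using $c\in K^{\times}$, a few lines force another component to vanish and eventually collapse either $(u,v)$ or $(x,y)$ to zero. Hence any genuine zero divisor must have all four entries in $K^{\times}$, which legitimises the divisions that follow.

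For the ``only if'' direction I would solve the second equation for $y=-vx/u$ and substitute into the first. After clearing denominators and applying $\sigma$, the first equation reduces to the single relation
\[
u\,\sigma(u)\,x \;=\; c\,\sigma(v)^{2}\,\sigma(x),
\]
equivalently $\sigma(x)/x = u\sigma(u)/(c\,\sigma(v)^{2})$. Applying $N_{K/L}$ and using that $N_{K/L}$ is multiplicative and fixed by $\sigma$, the left hand side becomes $1$ and the right hand side becomes $N_{K/L}(u)^{2}/(N_{K/L}(c)N_{K/L}(v)^{2})$. Rearranging yields $N_{K/L}(c) = (N_{K/L}(u)/N_{K/L}(v))^{2} = N_{K/L}\!\left((u/v)^{2}\right)$, so $N_{K/L}(c) = N_{K/L}(a^{2})$ for $a := u/v \in K^{\times}$.

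For the converse, assume $N_{K/L}(c)=N_{K/L}(a^{2})$ for some $a\in K^{\times}$. Set $u:=a$, $v:=1$ and $\alpha := a\sigma(a)/c$; then multiplicativity of the norm gives $N_{K/L}(\alpha)=N_{K/L}(a)^{2}/N_{K/L}(c)=1$. The hypothesis $Aut_{L}(K)=\langle\sigma\rangle$ says that $K/L$ is cyclic Galois with generator $\sigma$, so Hilbert's Theorem 90 produces $x\in K^{\times}$ with $\sigma(x)/x=\alpha$. Setting $y:=-x/a$ and tracing backward through the manipulations of the previous paragraph shows that $(u,v)(x,y)=(0,0)$, exhibiting a zero divisor and completing the equivalence.

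The main conceptual input is Hilbert 90, and it is exactly at this step that the cyclicity hypothesis $Aut_{L}(K)=\langle\sigma\rangle$ is needed; without it we could extract the norm obstruction but could not reverse it. The genuinely delicate point, rather than any hard computation, is the preliminary case analysis ensuring that zero divisors only arise from quadruples with all four entries in $K^{\times}$, since this is what makes the passage to $\sigma(x)/x$ and to a scalar equation in $L^{\times}$ meaningful; once that is in place, the forward direction is a routine norm computation and the backward direction is a direct application of Hilbert 90.
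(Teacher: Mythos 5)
Your proof is correct, and it is a genuinely different route from the one the paper points to. The paper merely refers the reader to Burmester's Theorem 1 together with Albert's Theorem 5 (p.\ 200), which is where the hypothesis that $F$ be infinite enters; you instead derive the equivalence directly by reducing the zero-divisor system to the single relation $\sigma(x)/x = u\sigma(u)/\bigl(c\,\sigma(v)^2\bigr)$, pushing it through $N_{K/L}$ for the forward direction, and invoking Hilbert's Theorem~90 for the converse. Hilbert~90 applies because $L=Fix(\sigma)$ makes $K/L$ cyclic Galois with $\langle\sigma\rangle$ as Galois group, which is exactly the cyclicity hypothesis. Your case analysis showing a genuine zero divisor has all four coordinates in $K^{\times}$ is the same routine check that appears in the proof of Theorem~\ref{CDSDivision}, and the remaining computations are correct (indeed, the backward direction can be read off Theorem~\ref{CDSDivision} by taking $r=a$, $t=1$, $s=w^{-1}$ with $c a^{-2}=\sigma(w)/w$). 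Worth flagging: nothing in your argument uses that $F$ is infinite, so your proof actually gives the result over any base field, which is stronger than what the paper states and is consistent with the paper's Remark 2.8 on the finite-field case of odd characteristic; the infiniteness assumption appears to be an artifact of the particular auxiliary theorem Burmester's proof invokes, and your Hilbert~90 route avoids it entirely.
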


The proof is analogous to the proof of \cite[Theorem 1]{Bur}. As it uses \cite[Theorem 5, p.200]{A2018}, we require that $F$ is not a finite field.

If $K/Fix(\sigma)$ is not a cyclic extension, this result does not necessarily hold. However, we can directly compute a different necessary and sufficient condition for $D(K,\sigma,c)$ to be a division algebra:

\begin{theorem}\label{CDSDivision}
$D(K,\sigma,c)$ is a division algebra if and only if $c\neq r^2s\sigma(s)^{-1}t^{-1}\sigma(t)^{-1}$ for all $r,s,t\in K^{\times}$.
\end{theorem}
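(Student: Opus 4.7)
The plan is to exploit that $D$ is finite-dimensional, so $D(K,\sigma,c)$ fails to be a division algebra iff it has a zero divisor, i.e.\ there exist $(u,v),(x,y)\neq(0,0)$ in $K\oplus K$ with
\begin{equation*}
ux+c\sigma(vy)=0 \qquad\text{and}\qquad uy+vx=0.
\end{equation*}
I would first argue that any such zero divisor must have all four components $u,v,x,y$ nonzero: a quick case analysis shows that if any one of them vanishes, then the two defining equations (together with $c\in K^\times$ and $K$ being a field) force one of the two factors to be zero. This reduces the problem to examining solutions in $(K^\times)^4$.

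Assuming all four components are nonzero, the second equation lets me eliminate $y$ via $y=-vx/u$. Substituting into the first equation and clearing denominators yields
\begin{equation*}
u\sigma(u)\,x = c\,\sigma(v)^{2}\,\sigma(x),
\end{equation*}
so
\begin{equation*}
c \;=\; u\sigma(u)\,\cdot\, x\sigma(x)^{-1} \,\cdot\, \sigma(v)^{-2}.
\end{equation*}
Setting $r=\sigma(v)^{-1}$, $s=x$, $t=u^{-1}$ (all in $K^\times$), one checks immediately that $r^{2}=\sigma(v)^{-2}$, $s\sigma(s)^{-1}=x\sigma(x)^{-1}$ and $t^{-1}\sigma(t)^{-1}=u\sigma(u)$, so
\begin{equation*}
c \;=\; r^{2}s\sigma(s)^{-1}t^{-1}\sigma(t)^{-1}.
\end{equation*}
This proves the ``only if'' direction by contrapositive: if no such $r,s,t$ exist, no zero divisor exists.

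For the converse, given any representation $c=r^{2}s\sigma(s)^{-1}t^{-1}\sigma(t)^{-1}$ with $r,s,t\in K^\times$, I would explicitly exhibit a zero divisor by setting $u=t^{-1}$, $v=\sigma^{-1}(r)^{-1}$, $x=s$ and $y=-vx/u=-ts\sigma^{-1}(r)^{-1}$. Direct substitution — using the commutativity of $K$ and the chosen form of $c$ — shows both $ux+c\sigma(vy)=0$ and $uy+vx=0$; moreover $u$ and $x$ are nonzero, so neither factor vanishes, giving a genuine zero divisor.

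The only genuine subtlety is bookkeeping in the exponents of $\sigma$ when reshaping $c=u\sigma(u)x\sigma(x)^{-1}\sigma(v)^{-2}$ into the symmetric form involving $r^{2}$, $s\sigma(s)^{-1}$, and $t^{-1}\sigma(t)^{-1}$; the identifications $r=\sigma(v)^{-1}$, $s=x$, $t=u^{-1}$ are forced by this matching, and once they are in hand both directions of the theorem collapse into elementary verifications.
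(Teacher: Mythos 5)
Your proof is correct and takes essentially the same approach as the paper's: both write down the zero-divisor equations, eliminate one variable by substitution and rearrange to exhibit $c$ in the target form, then construct an explicit zero divisor for the converse. The only cosmetic difference is that you eliminate $y$ (obtaining $c=u\sigma(u)x\sigma(x)^{-1}\sigma(v)^{-2}$ and the identifications $r=\sigma(v)^{-1}$, $s=x$, $t=u^{-1}$), whereas the paper eliminates $x$ (obtaining $c=u^{2}y\sigma(y)^{-1}v^{-1}\sigma(v)^{-1}$, with the more immediate $r=u$, $s=y$, $t=v$ and an explicit zero divisor not involving $\sigma^{-1}$).
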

\begin{proof}
Suppose $D(K,\sigma,c)$ is not a division algebra. Then there exist nonzero elements $(u,v),(x,y)\in K\oplus K$ such that $(u,v)(x,y)=(0,0).$ This is equivalent to the simultaneous equations \begin{align}
ux+c\sigma(vy)=&0, \label{CDSDivision1}\\
uy+vx=&0. \label{CDSDivision2}
\end{align}
If $v=0$, (\ref{CDSDivision2}) becomes $uy=0$, so either $u=0$ or $y=0$. However, $u$ must be non-zero, else $(u,v)=(0,0)$ which is a contradiction, so we must have $y=0$. Additionally, (\ref{CDSDivision1}) gives $ux=0$. As $u$ is non-zero, this implies $x=0$ and so $(x,y)=(0,0)$ which is again a contradiction.\\
So let $v\neq 0$. As $K$ is a field, we have $v^{-1}\in K$ and hence we obtain $x=-uyv^{-1}$ from (\ref{CDSDivision2}). Now if $y=0$, this implies that $x=0$ which contradicts the assumption that $(x,y)\neq(0,0)$. Substituting this into (\ref{CDSDivision1}), we get $-u^2yv^{-1}+c\sigma(vy)=0,$ which rearranges to give $c=u^2y\sigma(y)^{-1}v^{-1}\sigma(v)^{-1}$.\\
Conversely, suppose $c=r^2s\sigma(s)^{-1}t^{-1}\sigma(t)^{-1}$ for some $r,s,t\in K^{\times}$. Consider the elements $(r,t)$ and $(-rst^{-1},s)$. Both elements are nonzero but satisfy \begin{equation*}
(r,t)(-rst^{-1},s)=(-r^2st^{-1}+r^2s\sigma(s)^{-1}t^{-1}\sigma(t)^{-1}\sigma(ts),rs-rst^{-1}t)=(0,0).
\end{equation*}
Hence $D(K,\sigma,c)$ is not a division algebra.
\end{proof}

\begin{corollary}\label{CDSDivisionNorm} If $N_{K/F}(c)\neq N_{K/F}(a^2)$ for all $a\in K^{\times}$, then $D(K,\sigma,c)$ is a division algebra.
\end{corollary}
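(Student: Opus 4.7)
The plan is to establish the corollary as an immediate consequence of Theorem \ref{CDSDivision} via contrapositive, exploiting the fact that the field norm $N_{K/F}$ is invariant under any $F$-automorphism of $K$.

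Specifically, I would assume that $D(K,\sigma,c)$ is not a division algebra. By Theorem \ref{CDSDivision}, there exist elements $r,s,t\in K^{\times}$ such that
\[
c=r^2 s\,\sigma(s)^{-1} t^{-1}\sigma(t)^{-1}.
\]
Applying $N_{K/F}$ to both sides and using multiplicativity of the norm, I would obtain
\[
N_{K/F}(c)=N_{K/F}(r)^2\, N_{K/F}(s)\, N_{K/F}(\sigma(s))^{-1}\, N_{K/F}(t)^{-1}\, N_{K/F}(\sigma(t))^{-1}.
\]
Since $\sigma\in Aut_F(K)$, the norm satisfies $N_{K/F}(\sigma(x))=N_{K/F}(x)$ for every $x\in K^{\times}$ (as $N_{K/F}$ is the product over a set of embeddings that $\sigma$ merely permutes). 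Thus the factors involving $s$ cancel and the two $t$-factors combine, yielding
\[
N_{K/F}(c)=N_{K/F}(r)^2\, N_{K/F}(t)^{-2}=N_{K/F}\!\left((rt^{-1})^2\right).
\]
Setting $a:=rt^{-1}\in K^{\times}$ contradicts the hypothesis $N_{K/F}(c)\neq N_{K/F}(a^2)$ for all $a\in K^{\times}$.

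There is no real obstacle here; the only point to verify cleanly is the $\sigma$-invariance of $N_{K/F}$, which is standard. The corollary is therefore a one-line norm computation applied to the criterion of Theorem \ref{CDSDivision}.
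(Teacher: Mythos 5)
Your proof is correct and follows essentially the same route as the paper: contrapositive via Theorem \ref{CDSDivision}, then applying $N_{K/F}$, using multiplicativity and $\sigma$-invariance of the norm, and simplifying to $N_{K/F}(c)=N_{K/F}\bigl((rt^{-1})^2\bigr)$. The only difference is that you spell out the justification for $N_{K/F}\circ\sigma=N_{K/F}$, which the paper takes as known.
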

\begin{proof}
Suppose $D(K,\sigma,c)$ is not a division algebra. By Theorem \ref{CDSDivision}, there exists some $r,s,t\in K^{\times}$ such that $c=r^2s\sigma(s)^{-1}t^{-1}\sigma(t)^{-1}$. Taking norms of both sides of the equation we obtain $N_{K/F}(c)= N_{K/F}(r^2s\sigma(s)^{-1}t^{-1}\sigma(t)^{-1})$. As the norm is multiplicative and $N_{K/F}(x)=N_{K/F}(\sigma(x))$, this yields $$N_{K/F}(c)=N_{K/F}(r^2)N_{K/F}(s)N_{K/F}(s^{-1})N_{K/F}(t^{-1})^2,$$ which simplifies to $N_{K/F}(c)=N_{K/F}((rt^{-1})^2)$.
\end{proof}

\begin{corollary}\label{CDSDivisionSquare} If $c$ is a square in $K$, then $D(K,\sigma,c)$ is not a division algebra.
\end{corollary}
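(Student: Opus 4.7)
The plan is to invoke Theorem \ref{CDSDivision} in its contrapositive form: it suffices to exhibit $r,s,t\in K^{\times}$ such that $c=r^{2}s\sigma(s)^{-1}t^{-1}\sigma(t)^{-1}$. Assuming $c=a^{2}$ with $a\in K^{\times}$, the natural choice is to kill the $\sigma$-dependent factors by taking $s=t=1$ and then setting $r=a$. This gives $r^{2}s\sigma(s)^{-1}t^{-1}\sigma(t)^{-1}=a^{2}=c$, so the theorem immediately implies that $D(K,\sigma,c)$ is not a division algebra.

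As an alternative (and more concrete) presentation, I would simply exhibit the zero divisors produced by the converse direction of the proof of Theorem \ref{CDSDivision}. Plugging $r=a$, $s=t=1$ into the pair $(r,t)$ and $(-rst^{-1},s)$ used there yields the elements $(a,1)$ and $(-a,1)$, and one checks directly that
\[
(a,1)(-a,1)=(-a^{2}+c\,\sigma(1),\,a-a)=(0,0),
\]
since $c=a^{2}$. Both elements are clearly nonzero, so $D(K,\sigma,c)$ has zero divisors and hence is not a division algebra.

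There is no real obstacle here: the content is already contained in the previous theorem, and the only point to check is that the choice $s=t=1$ is legitimate (it is, since $1\in K^{\times}$). I would write the proof in one or two lines, referencing Theorem \ref{CDSDivision} to keep the exposition compact.
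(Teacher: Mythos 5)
Your proposal is correct and follows exactly the paper's argument: the paper likewise sets $s=t=1$ in Theorem \ref{CDSDivision} and notes that $c=r^2$ then violates the division condition. Your explicit check that $(a,1)(-a,1)=(0,0)$ is a harmless (and correct) unpacking of the same step.
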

\begin{proof}
In the notation of Theorem \ref{CDSDivision}, let $s=t=1$. Then if $c=r^2$ for some $r\in K$, then $D(K,\sigma,c)$ is not a division algebra.
\end{proof}

\begin{remark}\begin{enumerate}[(i)]
\item Let $F=\mathbb{R}$ and $K=\mathbb{C}$. As every element of $\mathbb{C}$ is a square, we do not obtain any real division algebras by using this construction.

\item If $F$ is a finite field of characteristic 2, we also do not obtain any division algebras: again, every element is a square, so $D(K,\sigma,c)$ is not a division algebra by Corollary \ref{CDSDivisionSquare}.
\end{enumerate}
\end{remark}

\begin{example}\begin{enumerate}[(i)]
\item Let $F=\mathbb{Q}$ and $K=\mathbb{Q}(\sqrt{a})$ for some $a\in \mathbb{Q}\setminus \mathbb{Q}^2$. Then we obtain $N_{K/\mathbb{Q}}(x+y\sqrt{a})=x^2-y^2a$ for all $x,y\in \mathbb{Q}$. If we let $c=y\sqrt{a}$ for any $y\in \mathbb{Q}^{\times}$, this yields $N_{K/\mathbb{Q}}(c)=-y^2a\not\in \mathbb{Q}^2,$ so we conclude that $D(K,\sigma,c)$ is a division algebra of dimension 4 over $\mathbb{Q}$.
\item Let $F=\mathbb{Q}_p$ and $K=\mathbb{Q}_p(\alpha)$ be a quadratic field extension of $\mathbb{Q}_p$. Thus $K$ is equal to one of $\mathbb{Q}_p(\sqrt{p})$, $\mathbb{Q}_p(\sqrt{u})$ or $\mathbb{Q}_p(\sqrt{up})$, where $u\in \mathbb{Z}_p\setminus \mathbb{Z}_p^2$. If $p\equiv 1\mbox{ (mod }4)$, it follows that $-\alpha^2\not\in \mathbb{Q}_p^2$ and thus we have $N_{K/ \mathbb{Q}_p}(y\alpha)=-y^2\alpha^2\not\in \mathbb{Q}_p^2.$ Hence, $D(K,\sigma, y\alpha)$ is a division algebra of dimension 4 over $\mathbb{Q}_p$.
\end{enumerate}
\end{example}

\begin{remark}
If $F$ is a finite field of odd characteristic, we can see that Corollary \ref{CDSDivisionSquare} is also a necessary condition for $D(K,\sigma,c)$ to be a division algebra. This was originally proved in \cite[Theorem 1']{Bur} but can also be obtained as a consequence of Theorem \ref{CDSDivision}:\\

If $F=\mathbb{F}_{p^s}$ and $K=\mathbb{F}_{p^r}$ is a finite extension of $F$, it is known that $Aut_F(K)$ is cyclic of order $r/s$ and is generated by $\phi^s$, where $\phi$ is defined by the Frobenius automorphism $\phi(x)=x^p$ for all $x\in K$. Over a finite field of odd characteristic, we thus have $$\sigma(x)x=\phi^t(x)x=x^{p^{st}}x=x^{p^{st}+1}$$ for some $t\in\mathbb{Z}$. As $p$ is odd, $p^{st}+1=2n$ for some $n\in\mathbb{Z}$ and so we can write $\sigma(x)x=x^{2n}=(x^{n})^2$ for all $x\in K$. A similar argument shows that $\sigma(x)x^{-1}$ is a square for all $x\in K$. Hence over finite fields of odd characteristic, Theorem \ref{CDSDivision} yields that $D=D(K,\sigma,c)$ is a division algebra if and only if $c$ is not a square in $K$.
\end{remark}

\subsection{Isomorphisms}
For the rest of the section, we will assume that $F$ has characteristic not 2 unless stated otherwise and that $\sigma\in Aut_F(K)$ is a non-trivial automorphism. Burmester \cite{Bur} computed the isomorphisms of commutative Dickson algebras $D(K,\sigma,c)$ when $K$ is a cyclic extension of $F$. The notation originally used in \cite{Bur} differs from ours; for clarity, we rephrase his result in our notation:

\begin{theorem}[\cite{Bur}, Theorem 2]\label{BurmesterAutomorphisms}
Let $K$ be a cyclic field of degree n over $F$ and let $Aut_F(K)=\langle\sigma\rangle$. Then $D(K,\sigma^i,c)\cong D(K,\sigma^j,d)$ if and only if $i=j$, and if there exists an integer $0\leqslant k< n$ and an element $x\in K$ such that $d=x^2\sigma^k(c).$
\end{theorem}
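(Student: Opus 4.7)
The plan is to exploit Theorem \ref{CDSNuclei} to pin down the shape of any isomorphism, then compare multiplications to extract both $i = j$ and the relation $d = x^2 \sigma^k(c)$, and finally verify the converse by direct construction.

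For the forward direction, suppose $f: D(K, \sigma^i, c) \to D(K, \sigma^j, d)$ is an $F$-algebra isomorphism. Since isomorphisms preserve middle nuclei, Theorem \ref{CDSNuclei} forces $f$ to restrict to an $F$-algebra automorphism of $K$; as $Aut_F(K) = \langle \sigma \rangle$, there is some $0 \leq k < n$ with $f|_K = \sigma^k$. Writing $f(0,1) = (a,b)$ and using $(0,1)^2 = (c,0)$, a short computation of $(a,b)^2$ in $D(K,\sigma^j,d)$ yields $2ab = 0$ and $a^2 + d\sigma^j(b^2) = \sigma^k(c)$. Since $\mathrm{char}\, F \neq 2$ we have $a = 0$ or $b = 0$, and $b = 0$ is excluded by injectivity (it would force $f(0,1) \in K = f(K)$, hence $(0,1) \in K$). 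So $a = 0$ and $b \neq 0$, and applying $f$ to the identity $(v,0)(0,1) = (0,v)$ determines $f(u,v) = (\sigma^k(u), b\sigma^k(v))$ for all $u,v \in K$.

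I would then feed this candidate into a general product. Comparing first components of $f((u,v)(x,y)) = f(u,v)f(x,y)$ reduces, after letting $w = vy$ vary over $K$, to the identity
$$\sigma^k(c)\, \sigma^{k+i}(w) \;=\; d\, \sigma^j(b^2)\, \sigma^{k+j}(w) \qquad \text{for all } w \in K.$$
Setting $w = 1$ gives $\sigma^k(c) = d\sigma^j(b^2)$, and substituting back forces $\sigma^{i-j} = id$ on $K$, hence $i = j$. Rearranging the scalar identity yields $d = (\sigma^j(b^{-1}))^2 \sigma^k(c)$, which is of the required form with $x := \sigma^j(b^{-1})$.

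For the converse, given $i = j$, an integer $0 \leq k < n$ and $x \in K^{\times}$ with $d = x^2 \sigma^k(c)$, I would define $f(u,v) := (\sigma^k(u), \sigma^{-i}(x^{-1}) \sigma^k(v))$ and verify directly that $f$ is an $F$-algebra isomorphism; the only non-trivial check is again the first component of a general product, and it collapses precisely to the hypothesis $d = x^2 \sigma^k(c)$. I expect the main obstacle to be the rigidity step in the forward direction, namely showing that $f$ must act on $K$ as some $\sigma^k$ and eliminating the case $b = 0$ for $f(0,1)$; once this is in place, the manipulations that extract $i = j$ and the relation $d = x^2 \sigma^k(c)$ are essentially bookkeeping.
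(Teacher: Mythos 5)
Your proof is correct, and its strategy coincides with the one the paper uses: the paper cites this result from Burmester without proof, but immediately afterwards proves the more general Theorem~\ref{CDSIsomorphisms} by exactly your method (middle-nucleus preservation to pin down the action on $K$, computing $G(0,1)=(a,b)$ via $(0,1)^2=(c,0)$, using $\mathrm{char}\,F\neq 2$ to force $a=0$, and then reading $i=j$ and $d=x^2\sigma^k(c)$ off the multiplicativity condition). One very minor point: Theorem~\ref{CDSNuclei} is stated for non-trivial $\sigma$, so your appeal to it implicitly assumes $\sigma^i,\sigma^j\neq\mathrm{id}$; this is the intended setting, but it is worth saying so explicitly since the theorem's phrasing allows $i=0$.
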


In order to generalise this result, we first note the following two lemmas:

\begin{lemma}\label{IsomorphismRestrictsToFixFields} Let $D(K,\sigma,c)$ and $D(L,\phi, d)$ be two commutative Dickson algebras over $F$. If $Fix(\sigma)\not\cong Fix(\phi)$, then $D(K,\sigma,c)\not\cong D(L,\phi,d)$ for any choice of $c\in K^{\times}$ and $d\in L^{\times}$.
\end{lemma}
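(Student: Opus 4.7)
The plan is to derive this lemma as an immediate consequence of the nucleus computation in Theorem \ref{CDSNuclei} together with the general fact, recalled in Section 1, that any algebra isomorphism $f\colon A\to B$ restricts to an isomorphism $Nuc(A)\xrightarrow{\sim} Nuc(B)$.

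First I would argue by contrapositive: assume there is an $F$-algebra isomorphism $f\colon D(K,\sigma,c)\to D(L,\phi,d)$, and deduce that $Fix(\sigma)\cong Fix(\phi)$. By Theorem \ref{CDSNuclei} (which applies because both $\sigma$ and $\phi$ are nontrivial by the standing assumption of the subsection), we have
\[
Nuc(D(K,\sigma,c)) = Fix(\sigma) \qquad\text{and}\qquad Nuc(D(L,\phi,d)) = Fix(\phi).
\]
Since $f$ carries the nucleus of the domain bijectively onto the nucleus of the codomain and preserves the $F$-algebra structure, the restriction $f|_{Fix(\sigma)}\colon Fix(\sigma)\to Fix(\phi)$ is an $F$-algebra isomorphism of fields. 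This contradicts the hypothesis $Fix(\sigma)\not\cong Fix(\phi)$, so no such $f$ exists.

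There is really no obstacle here, since the lemma is essentially a formal consequence of Theorem \ref{CDSNuclei}; the only thing to be careful about is to observe that the isomorphism is taken as an $F$-algebra isomorphism (so that $Fix(\sigma)$ and $Fix(\phi)$ are compared as $F$-algebras/fields, which is the natural way in which they are being compared in the statement). No further computation or case analysis is needed.
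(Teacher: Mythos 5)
Your proof is correct and is essentially the same as the paper's: the paper invokes preservation of the centre $Z(D)=Fix(\sigma)$, while you invoke preservation of the nucleus $Nuc(D)=Fix(\sigma)$, and since the algebras here are commutative these two subalgebras coincide. If anything, your version leans on the fact explicitly recorded in Section~1 (that isomorphisms carry $Nuc(A)$ onto $Nuc(B)$), whereas the paper uses the equally standard but unstated analogue for centres.
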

\begin{proof}
Suppose $D(K,\sigma,c)\cong D(L,\phi,c)$. As any isomorphism must map the centre of $D(K,\sigma,c)$ to the centre of $D(L,\phi,c)$, this implies $Fix(\sigma)\cong Fix(\phi)$.
\end{proof}

\begin{lemma}\label{sigmataufixfields}Let $\sigma\in Aut_F(K)$ and $\phi\in Aut_F(L)$. If there exists an $F$-isomorphism $\tau:K\to L$ such that $\tau\circ\sigma=\phi\circ\tau$, then $\tau\!\mid_{Fix(\sigma)}:Fix(\sigma)\to Fix(\phi)$ is an $F$-isomorphism.
\end{lemma}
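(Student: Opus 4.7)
The plan is to show two things: first, that $\tau$ sends $Fix(\sigma)$ into $Fix(\phi)$ so that the restriction is a well-defined $F$-algebra homomorphism, and second, that $\tau^{-1}$ sends $Fix(\phi)$ into $Fix(\sigma)$, which gives both surjectivity of the restriction and its inverse.

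First I would verify that $\tau(Fix(\sigma)) \subseteq Fix(\phi)$. Take $x \in Fix(\sigma)$, so $\sigma(x)=x$. Then applying the intertwining relation $\tau \circ \sigma = \phi \circ \tau$, we obtain
\[
\tau(x) = \tau(\sigma(x)) = \phi(\tau(x)),
\]
so $\tau(x) \in Fix(\phi)$. Since $\tau$ is an $F$-algebra homomorphism, its restriction $\tau\mid_{Fix(\sigma)}$ is automatically an $F$-algebra homomorphism from $Fix(\sigma)$ to $Fix(\phi)$, and it is injective because $\tau$ is.

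Next I would establish surjectivity. The key observation is that from $\tau \circ \sigma = \phi \circ \tau$ we derive $\sigma = \tau^{-1} \circ \phi \circ \tau$, i.e., the inverse isomorphism $\tau^{-1}: L \to K$ satisfies the symmetric relation $\tau^{-1} \circ \phi = \sigma \circ \tau^{-1}$. Now given any $y \in Fix(\phi)$, we compute
\[
\sigma(\tau^{-1}(y)) = \tau^{-1}(\phi(y)) = \tau^{-1}(y),
\]
so $\tau^{-1}(y) \in Fix(\sigma)$ and $\tau(\tau^{-1}(y)) = y$ exhibits $y$ as the image of an element of $Fix(\sigma)$.

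The proof is essentially formal manipulation with the commutativity relation, so there is no genuine obstacle; the only thing to be careful about is invoking the intertwining relation in the correct direction for surjectivity, which is cleanly handled by passing to $\tau^{-1}$ as above.
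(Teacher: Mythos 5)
Your proof is correct and takes essentially the same approach as the paper: show the restriction lands in $Fix(\phi)$ via the intertwining relation, then show surjectivity by pulling back an arbitrary $y\in Fix(\phi)$ (the paper does this by applying $\tau$ and using injectivity, you do it by conjugating the relation to $\tau^{-1}$, but it is the same computation).
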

\begin{proof}
For all $x\in Fix(\sigma)$, it follows that $$\phi\circ\tau(x)=\tau\circ\sigma(x)=\tau(x),$$ so $\tau(x)\in Fix(\phi)$. Hence we conclude that $im(\tau\mid_{Fix(\sigma)})\subseteq Fix(\phi)$. To show that in fact $im(\tau\!\mid_{Fix(\sigma)})= Fix(\phi)$, we note that for any $y\in Fix(\phi)$ there exists $x\in K$ such that $\tau(x)=y$. As $\tau(x)\in Fix(\phi)$, this implies $\tau\circ\sigma(x)=\phi\circ\tau(x)=\tau(x),$ thus $x\in Fix(\sigma)$ and it follows that $im(\tau\!\mid_{Fix(\sigma)})= Fix(\phi)$. This is sufficient to show that $\tau\mid_{Fix(\sigma)}:Fix(\sigma)\to Fix(\phi)$ is an $F$-isomorphism.
\end{proof}

\begin{theorem}\label{CDSIsomorphisms}
Let $K$ and $L$ be two finite field extensions of $F$ and $D=D(K,\sigma,c)$ and $D'=D(L,\phi,d)$ be two commutative Dickson algebras over $F$. Then $G:D\to D'$ is an isomorphism if and only if $G$ has the form $$G(x,y)=(\tau(x),\tau(y)b)$$ for some $F$-isomorphism $\tau:K\to L$ such that:\begin{enumerate}[(i)]
%\item $\tau\!\mid_{Fix(\sigma)}$ is an $F$-isomorphism from $Fix(\sigma)$ to $Fix(\phi)$,
\item $\phi\circ\tau=\tau\circ\sigma$,
\item there exists $b\in L^{\times}$ such that $\tau(c)=d\phi(b^2)$.
\end{enumerate}
It is possible to find $b\in L^{\times}$ satisfying (i) and (ii) if and only if $\tau(c)d^{-1}$ is a square in $L^{\times}$.
\end{theorem}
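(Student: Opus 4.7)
The plan is: for the forward direction, first use nucleus preservation to extract the $F$-isomorphism $\tau\colon K\to L$ underlying any isomorphism $G$, then use multiplicativity of $G$ on a few well-chosen elements to pin down its explicit form and recover conditions (i) and (ii); for the converse, verify that the prescribed formula defines an algebra isomorphism. The final equivalence is then immediate from rewriting (ii).

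For the forward direction I would invoke Theorem \ref{CDSNuclei}, which identifies $Nuc_m(D)$ with $K$ and $Nuc_m(D')$ with $L$. Since any algebra isomorphism preserves the middle nucleus (by the standard associator argument, analogous to the statement recorded in Section 1 for $Nuc$), the restriction $\tau := G|_K\colon K\to L$ is an $F$-isomorphism. Writing $G(0,1)=(a,b)$ and combining $F$-linearity with the identity $(y,0)(0,1)=(0,y)$ in $D$ then forces $G(x,y)=(\tau(x)+\tau(y)a,\,\tau(y)b)$ for all $x,y\in K$.

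The crucial step is to apply $G$ to the equation $(0,1)^2=(c,0)$: this yields simultaneously $a^2+d\phi(b^2)=\tau(c)$ and $2ab=0$. Since $\operatorname{char} F \neq 2$, the second equation gives $ab=0$, and ruling out $b=0$ (which would confine the image of $G$ to $L\oplus 0$ and contradict surjectivity) forces $a=0$. This both puts $G$ in the claimed form and yields condition (ii). I expect this step to be the main subtlety, as it relies essentially on both the characteristic hypothesis and the bijectivity of $G$. Condition (i) then falls out by equating the first coordinates of $G((u,v)(x,y))$ and $G(u,v)G(x,y)$: after substituting (ii) and cancelling the common factor, what remains is $\tau(\sigma(vy))=\phi(\tau(v))\phi(\tau(y))$ for all $v,y\in K$, and specialising $y=1$ gives $\tau\circ\sigma = \phi\circ\tau$.

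For the converse, given an $F$-isomorphism $\tau$ and a $b\in L^{\times}$ satisfying (i) and (ii), the map $G(x,y)=(\tau(x),\tau(y)b)$ is obviously $F$-linear and bijective, and its multiplicativity follows by reversing the computation above: commutativity of $L$ handles the second coordinate, while (i) and (ii) together handle the first. The final equivalence is then clear: since $\phi$ is a bijection of $L^{\times}$, the element $\phi(b)$ ranges over all of $L^{\times}$ as $b$ does, so the equation $\tau(c)=d\phi(b)^2$ admits a solution $b\in L^{\times}$ if and only if $\tau(c)d^{-1}$ is a square in $L^{\times}$.
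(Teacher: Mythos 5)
Your proposal is correct and follows essentially the same route as the paper: identify $\tau$ as the restriction of $G$ to the middle nucleus via Theorem~\ref{CDSNuclei}, write $G(0,1)=(a,b)$, apply $G$ to $(0,1)^2=(c,0)$ to get $2ab=0$ and use surjectivity plus $\operatorname{char}F\neq2$ to force $a=0$, then read off condition~(i) from full multiplicativity; the converse and the final square-class equivalence are handled the same way. The only cosmetic differences are that you omit the paper's (unnecessary for the argument) aside that $G$ restricts to an isomorphism of centres, and you spell out the specialisation $y=1$ more explicitly.
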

\begin{proof}
Suppose $G:D\to D'$ is an $F$-isomorphism. Then $G$ maps the middle nucleus of $D$ to the middle nucleus of $D'$, so we must have $K\cong L.$ This means $G$ restricted to $K$ must be an isomorphism which maps $K$ to $L$; that is, $G\!\mid_K=\tau:K\to L$ is an isomorphism of fields and we conclude $G(x,0)=(\tau(x),0)$ for all $x\in K$. Additionally, by Lemma \ref{IsomorphismRestrictsToFixFields} we see that $Z(D)\cong Z(D')$ under $G$. Thus, it follows that $\tau$ restricted to $Fix(\sigma)$ must yield an isomorphism from $Fix(\sigma)$ to $Fix(\phi)$. Let $G(0,1)=(a,b)$ for some $a,b\in L$. This implies \begin{equation*}
G(x,y)=G(x,0)+G(0,1)G(y,0)=(\tau(x)+a\tau(y),\tau(y)b).
\end{equation*}
As $G$ is multiplicative, it follows that $G((0,1)^2)=G(0,1)^2$ which holds if and only if $(a,b)(a,b)=(\tau(c),0).$ From this, we obtain the equations $a^2+d\phi(b^2)=\tau(c)$ and $2ab=0.$
As $L$ does not have characteristic 2, this implies either $a=0$ or $b=0$. If $b=0$, then $G(x,y)=(\tau(x)+\tau(y)a,0)$ and so $G$ is not surjective. This is a contradiction, as $G$ is an isomorphism and hence is bijective by definition. Thus we obtain $a=0$ and $d\phi(b^2)=\tau(c)$.\\
Finally, as $G$ is multiplicative this yields $G(u,v)G(x,y)=G((u,v)(x,y))$ for all $u,v,x,y\in K$. Computing both sides of this equation, we get $$(\tau(ux)+d\phi(\tau(vy)b^2),\tau(uy)b+\tau(vx)b)=(\tau(ux+c\sigma(vy)),\tau(uy+vx)b)$$ for all $u,v,x,y\in K$, which implies $d\phi(\tau(vy)b^2)=\tau(c\sigma(vy)).$ After substituting the condition $d\phi(b^2)=\tau(c)$, we are left with $\phi(\tau(vy))=\tau(\sigma(vy))$ for all $v,y\in K$; that is, $\phi\circ\tau=\tau\circ\sigma$.\\
Conversely, let $G:K\oplus K\to L\oplus L$ be defined by $G(x,y)=(\tau(x),\tau(y)b)$ for some $F$-isomorphism $\tau:K\to L$ satisfying the conditions stated in the theorem above. It is easily checked that this is an $F$-linear bijective map between vector spaces. We only need to check that the map is multiplicative. Then we have $G(u,v)G(x,y)=G((u,v)(x,y))$ for all $u,v,x,y\in K$ if and only if it follows that $d\phi(\tau(vy)b^2)=\tau(c\sigma(vy)).$ As $d\phi(b^2)=\tau(c)$ and $\phi\circ\tau=\tau\circ\sigma$, this is satisfied for all $v,y\in K$. Further, by Lemma \ref{sigmataufixfields} this certainly maps the centre of $D$ to the centre of $D'$. Thus we conclude that $G:D\to D'$ is an $F$-algebra isomorphism.
\end{proof}
%
%\begin{corollary}\label{CDSFixfields} Let $\sigma\in Aut_F(K)$ and $\phi\in Aut_F(L)$. If there exists an $F$-isomorphism $\tau:K\to L$ such that $\phi\circ\tau=\tau\circ\sigma$ and $\tau(c)=d\phi(b^2)$ for some $c\in K^{\times}$ and $b,d\in L^{\times}$, then $Fix(\phi)\cong Fix(\sigma)$.
%\end{corollary}
%\begin{proof}
%By Theorem \ref{CDSIsomorphisms}, we know that such an isomorphism $\tau:K\to L$ can be used to construct an isomorphism from $D(K,\sigma,c)$ to $D(K,\phi,d)$. As any isomorphism must map the left, middle and right nuclei of $D(K,\sigma,c)$ to the corresponding nuclei of $D(K,\phi,d)$, it follows that $Fix(\phi)\cong Fix(\sigma)$.
%\end{proof}

\begin{corollary}\label{CDSIsomorphismFinite}Let $D=D(K,\sigma,c)$ and $D'=D(K,\phi,d)$ be two commutative Dickson algebras over $F$. Then $G:D\to D'$ is an $F$-algebra isomorphism if and only if $G$ has the form $$G(x,y)=(\tau(x),\tau(y)b)$$ for some $\tau\in Aut_F(K)$ such that:\begin{enumerate}[(i)]
%\item $\tau\!\mid_{Fix(\sigma)}$ is an $F$-isomorphism from $Fix(\sigma)$ to $Fix(\phi)$,
\item $\phi\circ\tau=\tau\circ\sigma$,
\item there exists $b\in K^{\times}$ such that $\tau(c)=d\phi(b^2)$.
\end{enumerate}
It is possible to find $b\in K^{\times}$ satisfying (i) and (ii) if and only if $\tau(c)d^{-1}$ is a square in $K^{\times}$.
\end{corollary}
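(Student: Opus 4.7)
The plan is to obtain Corollary \ref{CDSIsomorphismFinite} as a direct specialisation of Theorem \ref{CDSIsomorphisms} to the case $L = K$. Under that identification an $F$-isomorphism $\tau : K \to L$ becomes precisely an element of $Aut_F(K)$, and conditions (i) and (ii) translate verbatim. I would simply invoke both directions of Theorem \ref{CDSIsomorphisms} with $L$ replaced by $K$: the forward direction supplies the required $\tau \in Aut_F(K)$ and $b \in K^{\times}$ from any isomorphism $G$, and the converse direction assembles an isomorphism out of any such $\tau$ and $b$. No new arguments about multiplicativity or $F$-linearity are needed, since those have already been verified in full generality in Theorem \ref{CDSIsomorphisms}.

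For the final clause, I would argue that the existence of $b \in K^{\times}$ satisfying (ii) is equivalent to $\tau(c)d^{-1}$ being a square in $K^{\times}$. Condition (ii) can be rewritten as $\tau(c)d^{-1} = \phi(b)^2$, which exhibits $\tau(c)d^{-1}$ as the square of $\phi(b)$. Conversely, if $\tau(c)d^{-1} = e^2$ for some $e \in K^{\times}$, then setting $b := \phi^{-1}(e)$ yields $\phi(b)^2 = e^2 = \tau(c)d^{-1}$, so (ii) holds. Because $\phi$ is an automorphism of $K$, it preserves and reflects squareness, so this equivalence is immediate.

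Since the corollary is a pure specialisation of Theorem \ref{CDSIsomorphisms}, there is no substantial obstacle. The only point worth noting is that restricting from an abstract $F$-isomorphism $K \to L = K$ to an element of $Aut_F(K)$ loses no information: because $K = L$ is already fixed in this setting, any such $F$-isomorphism is by definition an $F$-automorphism of $K$.
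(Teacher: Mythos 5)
Your proposal is correct and matches the paper's (implicit) treatment: the corollary is stated as an immediate specialisation of Theorem~\ref{CDSIsomorphisms} to $L=K$, which is exactly what you do, and your verification that condition (ii) is equivalent to $\tau(c)d^{-1}$ being a square in $K^{\times}$ (via $\phi(b^2)=\phi(b)^2$ and the fact that $\phi$ is a bijection of $K^{\times}$ onto itself) is the right bookkeeping.
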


\begin{corollary}\label{CDSIsomorphismsAbelian} Suppose $Aut_F(K)$ is an abelian group. If $\sigma\neq\phi$, then $D(K,\sigma,c)\not\cong D(K,\phi,d)$ for any choice of $c,d\in K^{\times}$.
\end{corollary}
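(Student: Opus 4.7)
The plan is to prove this by contrapositive, using Corollary \ref{CDSIsomorphismFinite} directly. Assume for contradiction that $D(K,\sigma,c)\cong D(K,\phi,d)$ for some $c,d\in K^{\times}$. Then Corollary \ref{CDSIsomorphismFinite} supplies an automorphism $\tau\in Aut_F(K)$ satisfying condition (i), namely the intertwining relation
\[
\phi\circ\tau=\tau\circ\sigma.
\]

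Now I would invoke the abelian hypothesis on $Aut_F(K)$. Since $\tau,\sigma\in Aut_F(K)$ commute, we have $\tau\circ\sigma=\sigma\circ\tau$, so the relation above becomes $\phi\circ\tau=\sigma\circ\tau$. Right-composing with $\tau^{-1}$ (which exists because $\tau$ is an automorphism of $K$) yields $\phi=\sigma$, contradicting the hypothesis $\sigma\neq\phi$.

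There is essentially no obstacle here: the entire content of the corollary is the observation that condition (i) of Corollary \ref{CDSIsomorphismFinite} forces $\sigma$ and $\phi$ to be conjugate in $Aut_F(K)$, and in an abelian group conjugacy collapses to equality. Condition (ii) and the existence of the element $b$ play no role and need not even be examined. The argument is two or three lines.
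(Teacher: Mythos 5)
Your proof is correct and is precisely the argument the paper leaves implicit: Corollary \ref{CDSIsomorphismFinite} forces the intertwining relation $\phi\circ\tau=\tau\circ\sigma$, and in an abelian $Aut_F(K)$ this is equivalent to $\phi=\sigma$. The slight terminological slip (you announce a contrapositive argument but then run a proof by contradiction) does not affect the substance, which matches the paper's intent exactly.
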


\begin{corollary} For all $c\in K^{\times}$, we have $D(K,\sigma,c)\cong D(K,\tau\circ\sigma\circ\tau^{-1},\tau(c))$ for each $\tau\in Aut_F(K)$ 
%such that $\tau\mid_{Fix(\sigma)}:Fix(\sigma)\xrightarrow{\sim} Fix(\tau \circ \sigma \circ \tau^{-1})$
and $D(K,\sigma,c)\cong D(K,\sigma,\sigma(b^{2})c)$ for each $b\in K^{\times}$.
\end{corollary}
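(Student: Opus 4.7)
The plan is to apply Corollary \ref{CDSIsomorphismFinite} to each of the two claimed isomorphisms. Recall that the corollary asserts $D(K,\sigma,c)\cong D(K,\phi,d)$ precisely when there exist $\tilde\tau\in Aut_F(K)$ and $b\in K^{\times}$ with $\phi\circ\tilde\tau=\tilde\tau\circ\sigma$ and $\tilde\tau(c)=d\,\phi(b^{2})$; once such data are produced, the isomorphism is $G(x,y)=(\tilde\tau(x),\tilde\tau(y)b)$. So in each case I only need to exhibit a suitable pair $(\tilde\tau,b)$ and verify the two conditions.

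For the first claim, I would take $\tilde\tau:=\tau$ (the given automorphism) and $b:=1$, so that $\phi=\tau\circ\sigma\circ\tau^{-1}$ and $d=\tau(c)$. Condition (i) reduces to $\tau\circ\sigma\circ\tau^{-1}\circ\tau=\tau\circ\sigma$, which is immediate, and condition (ii) reads $\tau(c)=\tau(c)\cdot 1=\tau(c)$. Hence $G(x,y)=(\tau(x),\tau(y))$ realises the desired isomorphism.

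For the second claim, I write $b_{0}$ for the element named $b$ in the statement, to avoid colliding with the $b$ of Corollary \ref{CDSIsomorphismFinite}, and take $\tilde\tau:=id$ and $b:=b_{0}^{-1}$. Then $\phi=\sigma$ and $d=\sigma(b_{0}^{2})c$; condition (i) is automatic since $\sigma$ commutes with the identity, and condition (ii) becomes $c=\sigma(b_{0}^{2})c\cdot\sigma(b_{0}^{-2})=c$, so the isomorphism is $G(x,y)=(x,y b_{0}^{-1})$. I do not anticipate any substantive obstacle: once Corollary \ref{CDSIsomorphismFinite} is in hand, both assertions reduce to a choice of witness data and a one-line check, the only mild care being to keep the $b$ appearing in the second isomorphism distinct from the $b$ that parametrises the isomorphisms in the corollary.
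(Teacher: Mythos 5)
Your proof is correct and matches the paper's: both exhibit the isomorphisms $G(x,y)=(\tau(x),\tau(y))$ and $G(x,y)=(x,yb_0^{-1})$, with you deriving them by explicitly checking the conditions of Corollary \ref{CDSIsomorphismFinite} while the paper simply states the maps and leaves the verification implicit. Your extra care to rename the $b$ in the statement to $b_0$ to avoid a notational clash with the $b$ in the corollary is sensible but does not change the substance.
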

\begin{proof}
This is clear employing the isomorphisms $G(x,y)=(\tau(x),\tau(y))$ and $G(x,y)=(x,b^{-1}y)$, respectively.
\end{proof}
%
%\begin{corollary}\label{CDSIsomorphismSigma}\begin{enumerate}[(i)]\item Let $D=D(K,\sigma,c)$ and $D'=D(K,\phi,c)$ be two commutative Dickson algebras over $F$. Then $G:D\to D'$ is an isomorphism if and only if $G$ has the form $$G(x,y)=(\tau(x),\tau(y)b)$$ for some $\tau\in Aut_F(K)$ which satisfies:\begin{itemize}
%\item $\tau\!\mid_{Fix(\sigma)}$ is an $F$-isomorphism from $Fix(\sigma)$ to $Fix(\phi)$,
%\item $\phi\circ\tau=\tau\circ\sigma$,
%\item there exists $b\in K^{\times}$ such that $\tau(c)=c\phi(b^2)$.
%\end{itemize}
%\item Let $D=D(K,\sigma,c)$ and $D'=D(K,\sigma,d)$ be $F$-algebras. Then $G:D\to D'$ is an isomorphism if and only if $G$ has the form $$G(x,y)=(\tau(x),\tau(y)b)$$ for some $\tau\in Aut_F(K)$ such that $\tau$ and $\sigma$ commute and $\tau(c)=d\sigma(b^2)$ for some $b\in K^{\times}$.
%\end{enumerate}
%In both cases, it is possible to find $b\in K^{\times}$ satisfying the required condition if and only if $\tau(c)d^{-1}$ is a square in $K^{\times}$.
%\end{corollary}

When $K$ is a finite field of odd characteristic, $\tau(c)d^{-1}$ is a square if and only if either both $c$ and $d$ are squares or both are non-squares in $K$. Due to this, we obtain the following well-known result from Theorem \ref{CDSIsomorphisms}:

\begin{corollary}[\cite{Bur}, Theorem 2']\label{CDSIsomorphismClassesFinite} Let $F$ be a finite field of odd characteristic and $K$ be a finite extension of degree $n$. Let $D=D(K,\sigma,c)$ and $D'=D(K,\phi,d)$ be division algebras. Then $D\cong D'$ if and only if $\sigma=\phi$. Hence up to isomorphism, there are exactly $n$ commutative Dickson semifields of order $p^{2n}$.
\end{corollary}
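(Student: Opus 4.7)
The plan is to deduce the corollary from Corollary~\ref{CDSIsomorphismFinite} together with the structural features of finite fields of odd characteristic. The preparatory observation, recorded in the remark preceding this corollary, is that in our setting $D(K,\sigma,c)$ is a division algebra if and only if $c\notin K^{\times 2}$; hence under the hypothesis that both $D$ and $D'$ are division algebras, $c$ and $d$ both lie in $K^{\times}\setminus K^{\times 2}$. For the ``only if'' direction I would invoke the fact that $Aut_F(K)$ is cyclic (generated by a power of the Frobenius) and therefore abelian; Corollary~\ref{CDSIsomorphismsAbelian} then forces $\sigma=\phi$ whenever $D\cong D'$.

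For the ``if'' direction I would apply Corollary~\ref{CDSIsomorphismFinite} with $\tau=\mathrm{id}_K$. Condition (i) becomes $\sigma\circ\mathrm{id}=\mathrm{id}\circ\sigma$, which is trivial, and condition (ii) amounts to producing $b\in K^{\times}$ with $\sigma(b)^{2}=cd^{-1}$. This is the one place where the finite-field hypothesis is essential: $K^{\times}$ is cyclic of even order, so its squares form an index-$2$ subgroup, and consequently the ratio of the two non-squares $c$ and $d$ is itself a square. Writing $cd^{-1}=e^{2}$ for some $e\in K^{\times}$ and setting $b=\sigma^{-1}(e)$ supplies the required element, and hence the desired isomorphism via Corollary~\ref{CDSIsomorphismFinite}.

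For the enumeration in the final sentence I would observe that by the equivalence just proved, the isomorphism classes of division algebras of the form $D(K,\sigma,c)$ correspond bijectively to the admissible choices of $\sigma\in Aut_F(K)$, and $|Aut_F(K)|=n$. The only genuinely substantive step anywhere in the argument is verifying condition (ii) of Corollary~\ref{CDSIsomorphismFinite}, which is resolved cleanly by the squares/non-squares dichotomy in the finite field $K^{\times}$; so no real obstacle is anticipated.
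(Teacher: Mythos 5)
Your proposal is correct and follows essentially the same route as the paper's own proof: both directions are handled by reducing to Corollary~\ref{CDSIsomorphismsAbelian} (using cyclicity of $Aut_F(K)$) for necessity, and by the fact that the ratio of two non-squares in the finite field $K^{\times}$ is a square for sufficiency, with the enumeration falling out at the end. Your version is slightly more explicit about fixing $\tau=\mathrm{id}_K$ and constructing $b=\sigma^{-1}(e)$, but this is the same underlying argument.
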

\begin{proof}
First, we note that since $Aut_F(K)$ is a cyclic group, $D\cong D'$ if and only if $\sigma=\phi$ and $$G(x,y)=(\tau(x),\tau(y)b)$$ for some $\tau\in Aut_F(K)$ such that $\tau(c)=d\sigma(b^2)$ for some $b\in K^{\times}$ by Cor. \ref{CDSIsomorphismsAbelian}. As both $c,d$ are non-squares in $K$, this implies that $\tau(c)d^{-1}$ is certainly a square in the finite field $K$. Thus we can always find $b\in K$ satisfying $\sigma(b)^2=\tau(c)d^{-1}.$ Hence $D\cong D'$ if and only if $\sigma=\phi$. As each $\sigma\in Aut_F(K)$ determines a different isomorphism class of division algebras, this implies that there are $\left| Aut_F(K)\right|=n$ isomorphism classes.
\end{proof}

Over an arbitrary field however, it is possible that $D(K,\sigma,c)\not\cong D(K,\sigma,d)$ for some $c,d\in K$ as we cannot guarantee that there exists $b\in K$ such that $\sigma(b)^2=\tau(c)d^{-1}.$ Let us now consider $F=\mathbb{Q}_p$ for $p\neq 2$ as an example. We employ the following well-known result:

%As any extension of $\mathbb{Q}_p$ is a discrete valuation field where Hensel's lemma holds in its valuation ring $\mathcal{O}_K$, we employ this following lemma:
%
%\begin{lemma}[\cite{Clark}, Lemma 1]\label{ClarksLemma}
%Let $\mathcal{O}_K$ be Henselian and non-dyadic (that is, $2\in \mathcal{O}_K^{\times}$). Then for all $x\in \mathcal{O}_K^{\times}$, the following are equivalent:\begin{enumerate}[(i)]
%\item $x$ is a square in $K^{\times}$,
%\item $x$ is a square in $\mathcal{O}_K^{\times}$,
%\item the image of $x$ in the residue field of $\mathcal{O}_K$ is a square.
%\end{enumerate}
%\end{lemma}

\begin{lemma} Let $K/\mathbb{Q}_p$ be a finite field extension for $p\neq 2$ with uniformizer $\pi\in \mathcal{O}_K$, where $\mathcal{O}_K$ is the valuation ring of $K$. Then $K^{\times}/(K^{\times})^2=\lbrace 1,u,\pi,u\pi\rbrace$ for some $u\in \mathcal{O}_K\setminus \mathcal{O}_K^2$.
\end{lemma}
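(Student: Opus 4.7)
The plan is to decompose $K^{\times}$ via the valuation and then analyse the unit group using Hensel's lemma, exploiting the hypothesis $p\neq 2$.

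First I would use the splitting $K^{\times}\cong \pi^{\mathbb{Z}}\times \mathcal{O}_K^{\times}$, so that every $x\in K^{\times}$ can be written uniquely as $x=\pi^n\varepsilon$ with $\varepsilon\in\mathcal{O}_K^{\times}$. Modulo squares, $\pi^n$ contributes either $1$ or $\pi$ depending on the parity of $n$, so the problem reduces to computing $\mathcal{O}_K^{\times}/(\mathcal{O}_K^{\times})^2$.

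Next I would use the filtration by principal units $U^{(i)}=1+\pi^i\mathcal{O}_K$ and the short exact sequence
\begin{equation*}
1\longrightarrow U^{(1)}\longrightarrow \mathcal{O}_K^{\times}\longrightarrow k^{\times}\longrightarrow 1,
\end{equation*}
where $k=\mathcal{O}_K/\pi\mathcal{O}_K$ is the residue field. Since $p\neq 2$, the group $k^{\times}$ has odd order $|k|-1$ times some factor... actually $k$ has characteristic $p$ odd, so $|k^{\times}|$ is even but $|k^{\times}/(k^{\times})^2|=2$ since $k^{\times}$ is cyclic of even order. The crucial step is to show every principal unit $x\in U^{(1)}$ is a square: applying Hensel's lemma to $f(T)=T^2-x$ at $T=1$, one has $f(1)\in\pi\mathcal{O}_K$ and $f'(1)=2\in\mathcal{O}_K^{\times}$ (here is where $p\neq 2$ is essential), so $f$ has a root in $\mathcal{O}_K^{\times}$. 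Hence $U^{(1)}\subseteq(\mathcal{O}_K^{\times})^2$, and the sequence above induces an isomorphism $\mathcal{O}_K^{\times}/(\mathcal{O}_K^{\times})^2\cong k^{\times}/(k^{\times})^2\cong \mathbb{Z}/2\mathbb{Z}$.

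Combining these two computations gives $K^{\times}/(K^{\times})^2\cong \mathbb{Z}/2\mathbb{Z}\times\mathbb{Z}/2\mathbb{Z}$, of order $4$. Picking any $u\in\mathcal{O}_K^{\times}$ whose reduction in $k$ is a non-square (which exists precisely because $|k^{\times}/(k^{\times})^2|=2$), the four cosets are represented by $\{1,u,\pi,u\pi\}$. The only potential obstacle is the verification that squaring is surjective on $U^{(1)}$, but this is entirely handled by the single application of Hensel's lemma made possible by the assumption $p\neq 2$.
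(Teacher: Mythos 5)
Your proof is correct, and it is the standard textbook argument. The paper does not actually prove this lemma at all---it is stated as a ``well-known result'' with no proof attached---so there is nothing to compare against on the paper's side. Your argument (split $K^{\times}\cong \pi^{\mathbb{Z}}\times\mathcal{O}_K^{\times}$ via the valuation, reduce the unit computation to the residue field through the exact sequence $1\to U^{(1)}\to\mathcal{O}_K^{\times}\to k^{\times}\to 1$, kill $U^{(1)}$ mod squares by Hensel's lemma using that $2$ is a unit, and observe $k^{\times}$ is cyclic of even order so $k^{\times}/(k^{\times})^2\cong\mathbb{Z}/2\mathbb{Z}$) is exactly the usual route and handles all finite extensions $K/\mathbb{Q}_p$, ramified or not. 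The momentary stumble (``has odd order $|k|-1$ times some factor\ldots actually\ldots'') is self-corrected; the residue field has $p^f$ elements with $p$ odd, so $|k^{\times}|=p^f-1$ is even, which is what you end up using. One cosmetic remark: the paper writes $u\in\mathcal{O}_K\setminus\mathcal{O}_K^2$, but what is really meant---and what your proof delivers---is a \emph{unit} $u\in\mathcal{O}_K^{\times}$ that is not a square; your phrasing is the precise one.
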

%\begin{proof}
%Every element of $K^{\times}$ can be written as $u\pi^n$ for some $n\in\mathbb{Z}$ and $u\in\mathcal{O}_K^{\times}$. Then $x\in K^{\times}$ is a square if and only if $x=u^2\pi^{2n}$ for some $n\in\mathbb{Z}$ and $u\in\mathcal{O}_K^{\times}$. Thus we see that $K^{\times}/(K^{\times})^2\cong \mathcal{O}_K^{\times}/(\mathcal{O}_K^{\times})^2\times \mathbb{F}_2$. Note the residue field of $\mathcal{O}_K$ is the finite field $\mathbb{F}_{p^e}$ for some $e\in \mathbb{N}$. By Lemma \ref{ClarksLemma}, it follows that $\mathcal{O}_K^{\times}/(\mathcal{O}_K^{\times})^2\cong \mathbb{F}_{p^e}/(\mathbb{F}_{p^e})^2\cong \mathbb{F}_2$, as $p$ is an odd prime. Hence we conclude $K^{\times}/(K^{\times})^2\cong \mathbb{F}_2\times \mathbb{F}_2$; a complete set of coset representatives is thus given by $\lbrace 1,u,\pi,u\pi\rbrace$ for some $u\in \mathcal{O}_K\setminus \mathcal{O}_K^2$.
%\end{proof}

\begin{corollary}\label{IsomorphismsoverQp}For each finite field extension $K/\mathbb{Q}_p$ such that $Aut_{\mathbb{Q}_p}(K)$ is an abelian group, there are at most $3\left| Aut_{\mathbb{Q}_p}(K)\right|$ non-isomorphic commutative Dickson division algebras of the kind $D(K,\sigma,c)$.
\end{corollary}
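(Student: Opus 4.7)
The plan is to combine Corollaries \ref{CDSIsomorphismsAbelian}, \ref{CDSIsomorphismFinite}, \ref{CDSDivisionSquare} with the structure of $\mathbb{Q}_p^\times/(\mathbb{Q}_p^\times)^2$ given by the lemma above. Since $Aut_{\mathbb{Q}_p}(K)$ is abelian, Corollary \ref{CDSIsomorphismsAbelian} shows that algebras built from distinct automorphisms are never isomorphic, so the count of isomorphism classes decomposes as a sum over $\sigma \in Aut_{\mathbb{Q}_p}(K)\setminus\{id\}$ of the number of isomorphism classes $D(K,\sigma,c)$ for varying $c$.

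Next, I would fix $\sigma$ and study when $D(K,\sigma,c)\cong D(K,\sigma,d)$. By Corollary \ref{CDSIsomorphismFinite}, an isomorphism exists if and only if there is some $\tau\in Aut_{\mathbb{Q}_p}(K)$ (commuting with $\sigma$ automatically in the abelian case) and $b\in K^\times$ with $\tau(c)=d\,\sigma(b^2)$, equivalently $\tau(c)d^{-1}\in (K^\times)^2$. Since any $\tau\in Aut_{\mathbb{Q}_p}(K)$ sends squares to squares, it induces a well-defined action on $K^\times/(K^\times)^2$. Thus the isomorphism classes of $D(K,\sigma,c)$ for $c\in K^\times$ are in bijection with the orbits of $Aut_{\mathbb{Q}_p}(K)$ on $K^\times/(K^\times)^2$.

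Now I would invoke the lemma: for $p\neq 2$, the group $K^\times/(K^\times)^2$ has exactly four elements, represented by $1,u,\pi,u\pi$. By Corollary \ref{CDSDivisionSquare}, if $c$ represents the trivial square class then $D(K,\sigma,c)$ is not a division algebra. Hence only the three non-trivial square classes can give rise to division algebras, so there are at most three orbits producing division algebras for each fixed $\sigma$. Summing this bound over $\sigma\in Aut_{\mathbb{Q}_p}(K)$ gives the announced upper bound of $3\,|Aut_{\mathbb{Q}_p}(K)|$.

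The main (very minor) obstacle is just checking the book-keeping: that the abelian hypothesis really removes the compatibility condition $\phi\circ\tau=\tau\circ\sigma$ in the restricted setting $\phi=\sigma$, and that the action on square classes is well-defined and makes distinct square classes of $c$ possibly yield isomorphic algebras only when they lie in the same $Aut_{\mathbb{Q}_p}(K)$-orbit. Everything else is a direct application of the lemmas already proved; no new computation is needed, and no sharper bound is claimed because the automorphism orbits on $\{u,\pi,u\pi\}$ can in general collapse or not depending on $K$.
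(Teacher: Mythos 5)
Your proposal is correct and follows essentially the same route as the paper: reduce to the case $\sigma=\phi$ via Corollary \ref{CDSIsomorphismsAbelian}, translate the isomorphism condition from Corollary \ref{CDSIsomorphismFinite} into a statement about square classes, discard the trivial class using Corollary \ref{CDSDivisionSquare}, and invoke the four-element structure of $K^\times/(K^\times)^2$. In fact your phrasing in terms of orbits of $Aut_{\mathbb{Q}_p}(K)$ acting on $K^\times/(K^\times)^2$ is a touch more careful than the paper's own wording (which asserts that $\tau(c)$ always lies in the same square class as $c$, a claim that can fail, e.g.\ for $K=\mathbb{Q}_p(\sqrt{p})$ with $p\equiv 3 \pmod 4$); since each orbit still contributes at most one isomorphism class among the three non-trivial square classes, the bound $3\,|Aut_{\mathbb{Q}_p}(K)|$ is unaffected either way.
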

\begin{proof}
As in Corollary \ref{CDSIsomorphismClassesFinite}, we see that $D(K,\sigma,c)\cong D(K,\phi,d)$ if and only if $\sigma=\phi$ and there exists some $\tau\in Aut_{\mathbb{Q}_p}(K)$ and $b\in K^{\times}$ such that $\tau(c)d^{-1}=\sigma(b^2)$. Such $b\in K$ exists if and only if $\tau(c)d^{-1~}$ is a square in $K$. If we assume that $D(K,\sigma,c)$ and $D(K,\sigma,d)$ are division algebras, $c,d$ are certainly not squares in $K$ and so must lie in non-identity cosets of $K^{\times}/(K^{\times})^2$. It is clear that $\tau(c)$ must lie in the same coset as $c$.\\
Considering the images of $\tau(c)$ and $d^{-1}$ in the quotient group $K^{\times}/(K^{\times})^2$, it follows that $\tau(c)d^{-1}$ is a square in $K^{\times}$ if and only if $c$ and $d$ lie in the same coset of $K^{\times}/(K^{\times})^2$. As there are 3 non-trivial cosets, we conclude there are at most $3\left| Aut_{\mathbb{Q}_p}(K)\right|$ non-isomorphic commutative Dickson division algebras.
\end{proof}

We cannot say for certain that we attain this bound, as this would assume that there exists a suitable $c\in K^{\times}$ in each non-trivial coset of $K^{\times}/(K^{\times})^2$ such that $D(K,\sigma,c)$ is a division algebra for each $\sigma\in Aut_{\mathbb{Q}_p}(K)$. However, if we can find some $c\in K^{\times}$ that satisfies the conditions of Corollary \ref{CDSDivisionNorm} from each coset of $K^{\times}/(K^{\times})^2$, this is sufficient to show that there are exactly $3\left| Aut_{\mathbb{Q}_p}(K)\right|$ non-isomorphic commutative Dickson division algebras.
%
%\begin{example} Let $K/\mathbb{Q}_p$ be a quadratic field extension with $p\cong 1 \mbox{ mod}(4)$. We have shown previously we can find $c\in K^{\times}$ which satisfies the conditions of Corollary \ref{CDSDivisionNorm} and as such, $D(K,\sigma,c)$ is a division algebra of dimension 4 over $\mathbb{Q}_p$. By Corollary \ref{IsomorphismsoverQp}, there exists exactly 2 non-isomorphic commutative Dickson algebras over $\mathbb{Q}_p$ which can be constructed using the field extension $K$. As there are up to isomorphism three quadratic field extensions of $\mathbb{Q}_p$, we conclude there are precisely six commutative Dickson algebras of dimension 4 over $\mathbb{Q}_p$.
%\end{example}

For an arbitrary field $F$, we conclude the following analogously:

\begin{corollary} Suppose $K/F$ is a finite field extension such that $Aut_F(K)$ is an abelian group and there exists $c\in K^{\times}$ such that $N_{K/F}(c)\neq N_{K/F}(a^2)$ for all $a\in K$. Then there are at least $\left| Aut_F(K)\right|$ non-isomorphic commutative Dickson division algebras over $F$ of the form $D(K,\sigma,c)$.
\end{corollary}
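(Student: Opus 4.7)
The plan is to combine Corollary \ref{CDSDivisionNorm} with Corollary \ref{CDSIsomorphismsAbelian}, in a manner structurally parallel to the proof of Corollary \ref{CDSIsomorphismClassesFinite}; the present statement is essentially a direct corollary of those two results and requires no new technical input.

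First I would fix the $c \in K^{\times}$ whose existence is hypothesised and observe that the criterion $N_{K/F}(c) \neq N_{K/F}(a^2)$ for all $a \in K$ is a condition on $c$ alone and makes no reference to the automorphism. Corollary \ref{CDSDivisionNorm} then applies uniformly and shows that $D(K,\sigma,c)$ is a commutative division algebra of dimension $2n$ over $F$ for every non-trivial $\sigma \in Aut_F(K)$.

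Next, because $Aut_F(K)$ is abelian and the second slot is held fixed at $c$, Corollary \ref{CDSIsomorphismsAbelian} immediately yields $D(K,\sigma,c) \not\cong D(K,\phi,c)$ whenever $\sigma \neq \phi$. Hence the family $\{D(K,\sigma,c) : \sigma \in Aut_F(K)\}$ consists of pairwise non-isomorphic division algebras, where, as in the bookkeeping of Corollary \ref{CDSIsomorphismClassesFinite}, the degenerate case $\sigma = id$ contributes the quadratic field extension $D(K,id,c) \cong K(\sqrt{c})$; this is a division algebra since the hypothesis specialised at $a=1$ forces $c$ to be a non-square in $K$, and being associative it cannot be isomorphic to any $D(K,\sigma,c)$ with $\sigma \neq id$, whose left nucleus $Fix(\sigma)$ is a proper subfield of $K$ by Theorem \ref{CDSNuclei}. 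This produces at least $|Aut_F(K)|$ pairwise non-isomorphic commutative Dickson division algebras of the form $D(K,\sigma,c)$.

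I foresee no substantive obstacle: the division property is delivered by the norm hypothesis and the non-isomorphism by the abelian hypothesis, so the argument reduces to citing the two earlier corollaries. The only subtlety is the bookkeeping around the identity automorphism, which I would handle exactly as Corollary \ref{CDSIsomorphismClassesFinite} does.
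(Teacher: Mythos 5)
Your proof is correct and uses the same two ingredients the paper intends (the paper gives no explicit argument, only the word ``analogously'' pointing back to Corollaries \ref{CDSIsomorphismClassesFinite} and \ref{IsomorphismsoverQp}): Corollary \ref{CDSDivisionNorm} to guarantee each $D(K,\sigma,c)$ is division, and Corollary \ref{CDSIsomorphismsAbelian} to separate isomorphism classes by $\sigma$. The one place where you go beyond what the paper records is the explicit treatment of $\sigma = id$, and that is actually necessary: Corollary \ref{CDSIsomorphismsAbelian} and the surrounding framework are formulated only for non-trivial $\sigma$ (the subsection opens by assuming ``$\sigma\in Aut_F(K)$ is a non-trivial automorphism''), so ranging $\sigma$ over non-trivial automorphisms alone yields only $|Aut_F(K)|-1$ pairwise non-isomorphic algebras. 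Your observation that $D(K,id,c)\cong K(\sqrt{c})$ is a division algebra distinguished from the rest by its nuclei is exactly what recovers the stated bound $|Aut_F(K)|$, and it matches the convention of Corollary \ref{CDSIsomorphismClassesFinite}, whose count of $n$ likewise only works out if the associative case is included.

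One small inaccuracy worth fixing: you write that ``the hypothesis specialised at $a=1$ forces $c$ to be a non-square,'' but $a=1$ only gives $N_{K/F}(c)\neq 1$. The correct specialisation is at $a=r$ for a putative square root $r$ of $c$: if $c=r^2$ then $N_{K/F}(c)=N_{K/F}(r^2)$, contradicting the hypothesis. (Equivalently, one can simply cite Corollary \ref{CDSDivisionSquare} together with Corollary \ref{CDSDivisionNorm}.) The conclusion you draw is right; only the stated specialisation is off.
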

\subsection{Automorphisms}

The automorphisms of commutative Dickson algebras were computed in \cite{Bur} when $K$ is a finite cyclic field extension. We consider the subset $$J(c)=\lbrace \tau\in Aut_F(K)\mid X^2-\tau(c)c^{-1} \mbox{ has solutions in }K\rbrace \subset Aut_F(K),$$ introduced in \cite{Bur}.

\begin{lemma} $J(c)$ is a subgroup of $Aut_F(K)$.\label{JcSubgroup}
\end{lemma}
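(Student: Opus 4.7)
The plan is to verify the three standard subgroup axioms directly from the defining condition that $\tau(c)c^{-1}$ is a square in $K^{\times}$.

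First I would check that the identity lies in $J(c)$: taking $\tau = id$ gives $\tau(c)c^{-1} = 1 = 1^2$, so $X^2 - 1$ visibly has a solution.

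Next I would verify closure under composition. Given $\tau, \rho \in J(c)$, pick $a, b \in K^{\times}$ with $\tau(c)c^{-1} = a^2$ and $\rho(c)c^{-1} = b^2$. The key manipulation is to factor
\[
(\tau \circ \rho)(c)\, c^{-1} \;=\; \tau(\rho(c))\, c^{-1} \;=\; \tau\bigl(\rho(c)c^{-1}\bigr)\cdot \tau(c)\, c^{-1} \;=\; \tau(b^2)\, a^2 \;=\; \bigl(\tau(b)\, a\bigr)^2,
\]
using that $\tau$ is a ring homomorphism fixing $F$ and hence commutes with squaring. This exhibits a solution in $K$ to $X^2 - (\tau\rho)(c)c^{-1}$.

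Closure under inverses will follow by a similar trick: if $\tau(c)c^{-1} = a^2$, applying $\tau^{-1}$ gives $c\,\tau^{-1}(c)^{-1} = \tau^{-1}(a)^2$, and inverting both sides yields $\tau^{-1}(c)\, c^{-1} = \bigl(\tau^{-1}(a)^{-1}\bigr)^2$, so $\tau^{-1} \in J(c)$. Since the group operation on $Aut_F(K)$ is composition, this is enough to conclude that $J(c)$ is a subgroup.

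No step here looks like a real obstacle; the only thing to be mindful of is routing the computation through the homomorphism property of $\tau$ in the closure-under-composition step, which is what makes the product of the two square witnesses again a square.
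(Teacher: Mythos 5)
Your proof is correct and uses essentially the same argument as the paper: verify the subgroup axioms directly, routing the closure-under-composition step through the homomorphism property of the automorphism and checking inverses by applying $\tau^{-1}$ and inverting. The only differences are cosmetic (composition order, exactly where you peel off the factor of $c^{-1}$).
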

\begin{proof}
Clearly the identity automorphism is contained in $J(c)$, as $X^2-cc^{-1}=X^2-1$ always has the solutions $X=\pm 1$.\\
Let $\tau,\phi\in J(c)$. Then $\tau(c)c^{-1}=a^2$ and $\phi(c)c^{-1}=b^2$ for some $a,b\in K^{\times}$. It follows that $$\phi\circ\tau(c)c^{-1}=\phi(a^2c)c^{-1}=\phi(a^2)b^2cc^{-1},$$ so $X^2-\phi\circ\tau(c)c^{-1}$ has the solutions $X=\pm \phi(a)b$. This implies $\phi\circ\tau\in J(c)$. Finally, for each $\tau\in J(c)$ we have $\tau^{-1}(c)c^{-1}=\tau^{-1}(a^{-1})^2,$ so $\tau^{-1}\in J(c)$. 
\end{proof}
When $K$ is a cyclic extension, there exists $2\left|J(c)\right|$ automorphisms of $D(K,\sigma,c)$:

\begin{theorem}\label{CDSSemifield2nAuts}\begin{enumerate}[(i)]
\item[(i)] [\cite{Bur}, Theorem 3 in our notation] Let $K$ be a cyclic extension of $F$. Then there exists $2\left|J(c)\right|$ automorphisms of $D(K,\sigma,c)$, each of which is given by $$G(x,y)=(\tau(x),\tau(y)b_i)$$ for each $\tau\in J(c)$, where $b_i\in K$ are such that $\sigma(b_i)$ are the two solutions of $X^2-\tau(c)c^{-1}$ for $i=1,2$.

\item[(ii)] [\cite{Bur}, Theorem 3' in our notation] Let $F$ be a finite field of odd characteristic and $K$ be a finite extension of degree $n$. Then there exists $2n$ automorphisms of $D=D(K,\sigma,c)$, each of which is given by $$G(x,y)=(\tau(x),\tau(y)b_i)$$ for each $\tau\in Aut_F(K)$, where $b_i\in K$ are such that $\sigma(b_i)$ are the two solutions of $X^2-\tau(c)c^{-1}$ for $i=1,2$.
\end{enumerate}
\end{theorem}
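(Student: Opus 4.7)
The plan is to derive this from Corollary \ref{CDSIsomorphismFinite} by specializing to the case $L=K$, $\phi=\sigma$, and $d=c$. According to that corollary, every $F$-algebra automorphism of $D(K,\sigma,c)$ must have the form $G(x,y)=(\tau(x),\tau(y)b)$ for some $\tau\in Aut_F(K)$ and $b\in K^{\times}$ satisfying the two conditions $\sigma\circ\tau=\tau\circ\sigma$ and $\tau(c)=c\sigma(b^{2})$. Conversely, every such pair $(\tau,b)$ yields an automorphism. So to prove (i), I just need to translate these two conditions into the language of $J(c)$ and then count.

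For (i), since $K$ is a cyclic extension of $F$, the group $Aut_F(K)$ is abelian, so condition $\sigma\circ\tau=\tau\circ\sigma$ is automatic for every $\tau\in Aut_F(K)$. The second condition $\tau(c)=c\sigma(b^{2})$ rearranges to $\sigma(b)^{2}=\tau(c)c^{-1}$, i.e.\ $\sigma(b)$ is a root of $X^{2}-\tau(c)c^{-1}$. Such a $b$ exists precisely when $X^{2}-\tau(c)c^{-1}$ has a root in $K$; that is, exactly when $\tau\in J(c)$. For each $\tau\in J(c)$, fix a root $\sigma(b_1)$ of $X^{2}-\tau(c)c^{-1}$; since $F$ (and hence $K$) has characteristic not 2, the other root is $\sigma(b_2)=-\sigma(b_1)$, giving exactly two valid values of $b$. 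Thus the automorphisms are parametrised bijectively by pairs $(\tau,b_i)$ with $\tau\in J(c)$ and $i\in\{1,2\}$, yielding $2|J(c)|$ in total.

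For (ii), the argument of (i) applies (a finite extension of a finite field is cyclic), so it only remains to show $J(c)=Aut_F(K)$, which has order $n$. Over a finite field of odd characteristic, the quotient group $K^{\times}/(K^{\times})^{2}$ has order $2$. Since any $\tau\in Aut_F(K)$ preserves squares and non-squares, the elements $c$ and $\tau(c)$ lie in the same coset of $(K^{\times})^{2}$, so their quotient $\tau(c)c^{-1}$ is always a square in $K^{\times}$, and hence $X^{2}-\tau(c)c^{-1}$ has a root. Therefore $J(c)=Aut_F(K)$ and part (i) yields $2n$ automorphisms.

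The argument is essentially an application of the already-established isomorphism criterion, so there is no substantive obstacle; the only subtle points are verifying that condition (i) of Corollary \ref{CDSIsomorphismFinite} is vacuous in the cyclic setting (which makes $Aut_F(K)$ abelian) and that the two choices of $b$ produce genuinely distinct automorphisms (which follows because $G$ is determined on $(0,1)$ by the value $(0,b)$, and $b_1\neq b_2$ when $\mathrm{char}(F)\neq 2$).
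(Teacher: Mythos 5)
Your proof is correct, and it follows the same line of reasoning the paper uses for its own generalisation of this result. The paper does not actually re-prove the Burmester theorem (it is given as a cited statement), but in the subsequent subsection it proves Theorem \ref{CDSAutGroup} and deduces Theorem \ref{CDSExactNumberOfAutomorphisms} and Corollary \ref{CDSExactNumberOfAutomorphismsAbelian}, which is exactly the path you took with Corollary \ref{CDSIsomorphismFinite} in place of Theorem \ref{CDSAutGroup} (these two are the same criterion, one phrased for isomorphisms and one for automorphisms). Your observations — that cyclicity of $K/F$ makes $Aut_F(K)$ abelian so the commutation condition is vacuous, that solvability of $\tau(c)=c\sigma(b^2)$ for $b\in K^\times$ is precisely $\tau\in J(c)$, that two distinct roots exist because $\mathrm{char}\,F\neq 2$ and $\tau(c)c^{-1}\neq 0$, and that for part (ii) the map induced by $\tau$ on $K^\times/(K^\times)^2$ must be the identity so $J(c)=Aut_F(K)$ — are all sound and match the machinery the paper develops.
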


We now compute the automorphisms when $K$ is an arbitrary finite field extension. We continue to assume that $\sigma\neq id$. 

\begin{theorem}\label{CDSAutGroup}
All automorphisms $G:D(K,\sigma,c)\to D(K,\sigma,c)$ are of the form $$G(u,v)=(\tau(u),\tau(v)b)$$ for some $\tau\in Aut_F(K)$ such that $\tau$ and $\sigma$ commute and $b\in K^{\times}$ satisfying $\tau(c)=c\sigma(b^2)$. Further, all maps of this form with $\tau\in Aut_F(K)$ and $b\in K^{\times}$ satisfying these conditions yield an automorphism of $D$.
\end{theorem}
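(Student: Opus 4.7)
The plan is to obtain this statement as a direct specialisation of Corollary \ref{CDSIsomorphismFinite}. That corollary characterises all $F$-algebra isomorphisms $G\colon D(K,\sigma,c)\to D(K,\phi,d)$, and an automorphism of $D(K,\sigma,c)$ is precisely an isomorphism of this kind in the case $\phi=\sigma$ and $d=c$.

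Substituting $\phi=\sigma$ and $d=c$ into Corollary \ref{CDSIsomorphismFinite} yields: $G\colon D(K,\sigma,c)\to D(K,\sigma,c)$ is an $F$-algebra isomorphism if and only if $G(u,v)=(\tau(u),\tau(v)b)$ for some $\tau\in Aut_F(K)$ satisfying
\begin{enumerate}[(i)]
\item $\sigma\circ\tau=\tau\circ\sigma$, i.e., $\tau$ and $\sigma$ commute,
\item there exists $b\in K^{\times}$ with $\tau(c)=c\sigma(b^2)$.
\end{enumerate}
Condition (i) is exactly the commutation requirement on $\tau$ and $\sigma$ in the statement, and condition (ii) is exactly the equation $\tau(c)=c\sigma(b^{2})$. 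This gives both the necessity (every automorphism has the required form) and the sufficiency (every map of the required form is an automorphism).

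Since the theorem is essentially a renaming of Corollary \ref{CDSIsomorphismFinite} with source and target algebra identified, no further argument is required; the only thing to verify is that no extra condition from that corollary has been dropped. The final sentence of Corollary \ref{CDSIsomorphismFinite} about $\tau(c)c^{-1}$ being a square is automatic here, since one is explicitly requiring the existence of $b$ such that $\sigma(b^{2})=\tau(c)c^{-1}$, which is equivalent to $\tau(c)c^{-1}$ being a square in $K^{\times}$ (as $\sigma$ is an automorphism, $\sigma(b^2)$ ranges over all squares as $b$ ranges over $K^{\times}$). There is no genuine obstacle; the only thing to be careful about is correctly translating the isomorphism conditions into the automorphism setting without inadvertently introducing or omitting hypotheses.
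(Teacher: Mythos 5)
Your derivation is correct, and it takes a genuinely more economical route than the paper. The paper's proof of Theorem \ref{CDSAutGroup} is a direct computation from first principles — it restricts $G$ to the middle nucleus, writes $G(0,1)=(a,b)$, and then uses $G((0,1)^2)=G(0,1)^2$ together with full multiplicativity to force $a=0$, $\tau(c)=c\sigma(b^2)$ and $\tau\circ\sigma=\sigma\circ\tau$, followed by a converse check. This is word-for-word the same argument already carried out in the proof of Theorem \ref{CDSIsomorphisms}, of which Corollary \ref{CDSIsomorphismFinite} is the specialisation to $L=K$. You instead simply set $\phi=\sigma$ and $d=c$ in that corollary and read off the theorem, which is logically cleaner and avoids repetition; the paper's choice to redo the computation buys nothing except self-containment of the Automorphisms subsection. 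Your side remark about the final sentence of the corollary (the square criterion for $\tau(c)d^{-1}$) is also handled correctly: since $\sigma$ is an automorphism, $\sigma(b^2)=\sigma(b)^2$ runs over all squares in $K^\times$ as $b$ does, so that clause is just a restatement of condition (ii) and imposes nothing extra. The one thing worth flagging explicitly, though your phrasing implicitly covers it, is that the standing hypotheses of the subsection ($\mathrm{char}\,F\neq 2$ and $\sigma\neq\mathrm{id}$) apply equally to Corollary \ref{CDSIsomorphismFinite} and to Theorem \ref{CDSAutGroup}, so no hypothesis is lost or gained by the specialisation.
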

\begin{proof}
Let $D=D(K,\sigma,c)$. Suppose that $G\in Aut_F(D)$. As automorphisms preserve the nuclei of an algebra, $G$ restricted to $K$ must be an automorphism of $K$. As $G$ is $F$-linear we obtain $F\subset Fix(G\!\mid_K)$ and so in fact $G\!\mid_K\in Aut_F(K)$. Let $G\!\mid_K=\tau\in Aut_F(K)$, so we have $G(x,0)=(\tau(x),0)$ for all $x\in K$.\\
Let $G(0,1)=(a,b)$ for some $a,b\in K$. Then we have \begin{equation*}
G(x,y)=G(x,0)+G(0,1)G(y,0)+(\tau(x)+a\tau(y),\tau(y)b).
\end{equation*}
As $G$ is multiplicative, we must also have $G((0,1)^2)=G(0,1)^2$ which holds if and only if $$(a,b)(a,b)=(\tau(c),0).$$ From this, we obtain the equations $a^2+c\sigma(b^2)=\tau(c)$ and $2ab=0.$
As $K$ does not have characteristic 2, this implies that either $a=0$ or $b=0$. If $b=0$, then $G(x,y)=(\tau(x)+\tau(y)a,0)$ and so $G$ is not surjective. This is a contradiction, as $G$ is an automorphism. Thus $a=0$ and we obtain $c\sigma(b^2)=\tau(c)$.\\
Finally, as $G$ is multiplicative we have $G(u,v)G(x,y)=G((u,v)(x,y))$ for all $u,v,x,y\in K$. Computing both sides of this equation, we get $$(\tau(ux)+c\sigma(\tau(vy)b^2),\tau(uy)b+\tau(vx)b)=(\tau(ux+c\sigma(vy)),\tau(uy+vx)b)$$ for all $u,v,x,y\in K$, which implies that $c\sigma(\tau(vy)b^2)=\tau(c\sigma(vy))$. After substituting the condition $c\sigma(b^2)=\tau(c)$, we are left with $\sigma(\tau(vy))=\tau(\sigma(vy))$ for all $v,y\in K$; that is, $\tau$ and $\sigma$ must commute.\\
Conversely, let $G:D\to D$ be a map defined by $G(x,y)=(\tau(x),\tau(y)b)$ such that $\tau$ and $\sigma$ commute and $\tau(c)=c\sigma(b^2)$. It is easily checked that this map is $F$-linear, bijective, additive and multiplicative. Hence $G$ is an $F$-algebra automorphism of $D$.
\end{proof}

\begin{corollary}\label{CDSAutSubgroupb1} There is a subgroup of $Aut_F(D)$ isomorphic to $$\lbrace\tau\in Aut_F(K)\mid \tau(c)=c \mbox{ and } \tau\circ\sigma=\sigma\circ\tau\rbrace.$$
\end{corollary}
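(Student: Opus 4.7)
The plan is to specialize Theorem \ref{CDSAutGroup} to the case $b = 1$. With this choice, the condition $\tau(c) = c\sigma(b^2)$ collapses to $\tau(c) = c$, while the condition that $\tau$ commutes with $\sigma$ is untouched. So for each $\tau$ in the set
$$H := \lbrace\tau\in Aut_F(K)\mid \tau(c)=c \mbox{ and } \tau\circ\sigma=\sigma\circ\tau\rbrace,$$
the map $G_\tau:D\to D$ defined by $G_\tau(u,v) = (\tau(u),\tau(v))$ is an $F$-algebra automorphism of $D$ by Theorem \ref{CDSAutGroup}.

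Next I would check directly that $H$ is a group under composition: the identity satisfies both conditions trivially; if $\tau,\phi\in H$, then $(\tau\circ\phi)(c) = \tau(\phi(c))=\tau(c)=c$ and $\tau\phi$ commutes with $\sigma$ because both factors do; and if $\tau\in H$, applying $\tau^{-1}$ to $\tau(c)=c$ gives $\tau^{-1}(c)=c$, while commutativity with $\sigma$ transfers to $\tau^{-1}$ immediately.

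Finally I would verify that $\Phi:H\to Aut_F(D)$, $\tau\mapsto G_\tau$, is an injective group homomorphism. For the homomorphism property, a direct computation gives $(G_\tau\circ G_\phi)(u,v) = G_\tau(\phi(u),\phi(v)) = (\tau\phi(u),\tau\phi(v)) = G_{\tau\phi}(u,v)$. For injectivity, $G_\tau = \mathrm{id}_D$ forces $\tau(u) = u$ for all $u\in K$, so $\tau = \mathrm{id}_K$. The image $\Phi(H)$ is therefore a subgroup of $Aut_F(D)$ isomorphic to $H$, as required. No step is a genuine obstacle here; this is essentially a bookkeeping consequence of Theorem \ref{CDSAutGroup}.
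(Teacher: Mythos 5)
Your proof is correct and follows the same route as the paper: specialize Theorem \ref{CDSAutGroup} to $b=1$ and observe that the resulting maps $(u,v)\mapsto(\tau(u),\tau(v))$ form a subgroup of $Aut_F(D)$ isomorphic to $H$. You simply spell out the bookkeeping (that $H$ is a group and that $\tau\mapsto G_\tau$ is an injective homomorphism) that the paper leaves implicit.
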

\begin{proof}
By Theorem \ref{CDSAutGroup}, all automorphisms of $D$ are of the form $G(x,y)=(\tau(x),\tau(y)b)$, such that $\tau$ and $\sigma$ commute and $b\in K^{\times}$ satisfies $\tau(c)=c\sigma(b^2)$. If we let $b=1$, we obtain a subgroup of $Aut_F(D)$ such that $\tau$ and $\sigma$ commute and $\tau(c)=c$.
\end{proof}

The subset of $Aut_F(K)$ containing all the automorphisms of $K$ which commute with $\sigma\in Aut_F(K)$ is called the \textit{centralizer of $\sigma$ in $Aut_F(K)$} and is denoted by $$C(\sigma)=\lbrace \tau\in Aut_F(K) \mid \tau\circ\sigma=\sigma\circ\tau\rbrace.$$ This subset forms a subgroup of $Aut_F(K)$, so $J(c)\cap C(\sigma)$ is also a subgroup of $Aut_F(K)$. We get the following generalisation of \cite[Theorem 3]{Bur}:

\begin{theorem}\label{CDSExactNumberOfAutomorphisms}
There are exactly $2\left|J(c)\cap C(\sigma)\right|$ automorphisms of $D(K,\sigma,c)$, each of which is given by $$G(x,y)=(\tau(x),\tau(y)b_i)$$ for each $\tau\in J(c)\cap C(\sigma)$, where $b_i\in K^{\times}$ is chosen such that $\sigma(b_i)$ are the two solutions of $X^2-\tau(c)c^{-1}$ for $i=1,2$.
\end{theorem}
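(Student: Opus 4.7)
The plan is to derive the theorem directly from Theorem \ref{CDSAutGroup}, which already pins down the shape $G(x,y) = (\tau(x), \tau(y)b)$ of an arbitrary automorphism together with the two defining conditions: $\tau$ commutes with $\sigma$, and $\tau(c) = c\sigma(b^2)$. The strategy is to reinterpret each of these two conditions in terms of membership in the subgroups $C(\sigma)$ and $J(c)$ of $Aut_F(K)$, and then count how many pairs $(\tau, b)$ produce distinct automorphisms.

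First I would unpack the relation $\tau(c) = c\sigma(b^2)$. Since $c \in K^{\times}$ and $\sigma$ is a ring automorphism, this rearranges to $\sigma(b)^2 = \tau(c)c^{-1}$, so an admissible $b \in K^{\times}$ exists if and only if $X^2 - \tau(c)c^{-1}$ has a root in $K$, which is exactly the condition $\tau \in J(c)$. Combining this with the condition that $\tau$ commutes with $\sigma$ from Theorem \ref{CDSAutGroup}, I conclude that the automorphisms of $D(K,\sigma,c)$ are parametrised by pairs $(\tau, b)$ with $\tau \in J(c) \cap C(\sigma)$ and $b \in K^{\times}$ such that $\sigma(b)$ is a root of $X^2 - \tau(c)c^{-1}$.

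Next I would count such pairs. Fix $\tau \in J(c) \cap C(\sigma)$. The polynomial $X^2 - \tau(c)c^{-1}$ has at least one root in $K$ by definition of $J(c)$, and since $F$ (and hence $K$) has characteristic not $2$ and $\tau(c)c^{-1} \neq 0$, its two roots in $K$ are distinct, differing only by sign. Call them $\pm\sigma(b_1)$; since $\sigma$ is bijective, these correspond to exactly two elements $b_1, b_2 = -b_1 \in K^{\times}$. Thus $\tau$ contributes exactly $2$ automorphisms. Summing over all $\tau \in J(c) \cap C(\sigma)$ gives $2|J(c) \cap C(\sigma)|$ candidate automorphisms.

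Finally, I would check that distinct pairs $(\tau, b)$ give distinct maps, so there is no overcounting: the restriction $G\!\mid_K = \tau$ recovers $\tau$, and then $b = G(0,1)$ in the second coordinate (divided by $1$) recovers $b$. This injectivity, together with the converse half of Theorem \ref{CDSAutGroup} which guarantees that every such pair really does define an automorphism, yields exactly $2|J(c) \cap C(\sigma)|$ automorphisms. The main obstacle is essentially bookkeeping: rewriting the existence of $b$ as the condition $\tau \in J(c)$ and confirming that the two square roots yield two genuinely distinct automorphisms; no deeper argument is needed beyond Theorem \ref{CDSAutGroup}.
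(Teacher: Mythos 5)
Your proposal is correct and follows essentially the same route as the paper's proof: invoke Theorem \ref{CDSAutGroup} to pin down the form of an automorphism, reinterpret the solvability of $\sigma(b)^2=\tau(c)c^{-1}$ as the condition $\tau\in J(c)$, and count two choices of $b$ per admissible $\tau$. The only additions beyond the paper are your explicit remarks that the two square roots are distinct (since $\mathrm{char}\,F\neq 2$) and that distinct pairs $(\tau,b)$ yield distinct maps, both of which the paper leaves implicit.
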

\begin{proof}
By Theorem \ref{CDSAutGroup}, $G$ is an automorphism of $D(K,\sigma,c)$ if and only if $G(u,v)=(\tau(u),\tau(v)b)$ for some $\tau\in C(\sigma)$ and $b\in K^{\times}$ such that $\sigma(b)^2=\tau(c)c^{-1}$. We can find such $b\in K^{\times}$ if and only if $\tau\in J(c)$. Denote the solutions of $X^2-\tau(c)c^{-1}$ by $\sigma(b_1)$ and $\sigma(b_2)$. Thus $G$ is an automorphism of $D(K,\sigma,c)$ if and only if $G(u,v)=(\tau(u),\tau(v)b_i)$ for each $\tau\in J(c)\cap C(\sigma)$, where $b_i\in K$ are such that $\sigma(b_i)$ are the two solutions of $X^2-\tau(c)c^{-1}$ for $i=1,2$.
\end{proof}

\begin{corollary}\label{CDSExactNumberOfAutomorphismsAbelian} If $Aut_F(K)$ is abelian, then $D(K,\sigma,c)$ has exactly $2\left|J(c)\right|$ automorphisms.
\end{corollary}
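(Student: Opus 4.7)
The plan is to apply Theorem \ref{CDSExactNumberOfAutomorphisms} directly and simplify the index set $J(c) \cap C(\sigma)$ using the hypothesis that $Aut_F(K)$ is abelian. By Theorem \ref{CDSExactNumberOfAutomorphisms}, the number of automorphisms of $D(K,\sigma,c)$ equals $2|J(c) \cap C(\sigma)|$, where $C(\sigma)$ denotes the centralizer of $\sigma$ in $Aut_F(K)$.

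The key observation is that when $Aut_F(K)$ is abelian, every automorphism $\tau \in Aut_F(K)$ commutes with $\sigma$, so $C(\sigma) = Aut_F(K)$. Since $J(c)$ is a subgroup of $Aut_F(K)$ by Lemma \ref{JcSubgroup}, it follows immediately that $J(c) \cap C(\sigma) = J(c) \cap Aut_F(K) = J(c)$. Substituting this into the count from Theorem \ref{CDSExactNumberOfAutomorphisms} gives exactly $2|J(c)|$ automorphisms of $D(K,\sigma,c)$.

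There is no substantive obstacle in this argument: it is a one-step specialisation of the preceding theorem, and the entire content lies in the trivial observation that the commutativity hypothesis forces the centralizer condition to be vacuous. The proof will therefore consist of a single short paragraph citing Theorem \ref{CDSExactNumberOfAutomorphisms} and Lemma \ref{JcSubgroup} and noting that $C(\sigma) = Aut_F(K)$ under the abelian hypothesis.
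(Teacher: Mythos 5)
Your proposal is correct and is essentially the paper's own argument: invoke Theorem \ref{CDSExactNumberOfAutomorphisms} and observe that the abelian hypothesis forces $C(\sigma)=Aut_F(K)$, hence $J(c)\cap C(\sigma)=J(c)$.
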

\begin{proof}
This follows immediately from Theorem \ref{CDSExactNumberOfAutomorphisms} after noting that $C(\sigma)=Aut_F(K)$.
\end{proof}

\begin{corollary}\label{CDSCSigmaAuts} If $c\in F^{\times}$, then $D(K,\sigma,c)$ has exactly $2\left|C(\sigma)\right|$ automorphisms.
\end{corollary}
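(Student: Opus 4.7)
The plan is to invoke Theorem \ref{CDSExactNumberOfAutomorphisms}, which asserts that the automorphism group of $D(K,\sigma,c)$ has order exactly $2|J(c)\cap C(\sigma)|$. Thus everything reduces to showing that when $c\in F^\times$, one has $J(c)\cap C(\sigma)=C(\sigma)$, i.e. $C(\sigma)\subseteq J(c)$.

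For this, I would simply observe that any $\tau\in Aut_F(K)$ fixes $F$ pointwise, so if $c\in F^\times$ then $\tau(c)=c$ and hence $\tau(c)c^{-1}=1$. The polynomial $X^2-1$ clearly has the solution $X=1$ (and $X=-1$) in $K$, so every $\tau\in Aut_F(K)$ belongs to $J(c)$. In particular, $J(c)=Aut_F(K)$ and therefore $J(c)\cap C(\sigma)=C(\sigma)$.

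Substituting this into Theorem \ref{CDSExactNumberOfAutomorphisms} yields the claimed count of $2|C(\sigma)|$ automorphisms. There is no real obstacle here; the only thing to be mildly careful about is the characteristic assumption already in force throughout this subsection ($\mathrm{char}\,F\neq 2$), which is what guarantees that $X^2-1$ has two distinct roots $\pm 1$ in $K$ and hence that each $\tau\in C(\sigma)$ contributes two distinct automorphisms, as needed to match the factor of $2$ in Theorem \ref{CDSExactNumberOfAutomorphisms}.
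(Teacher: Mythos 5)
Your proposal is correct and follows exactly the paper's own argument: both reduce to Theorem \ref{CDSExactNumberOfAutomorphisms} and observe that $c\in F^\times$ forces $\tau(c)c^{-1}=1$ for every $\tau\in Aut_F(K)$, hence $J(c)=Aut_F(K)$ and $J(c)\cap C(\sigma)=C(\sigma)$. The remark about characteristic $\neq 2$ guaranteeing two distinct roots is a sensible extra clarification but not a departure from the paper's reasoning.
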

\begin{proof}
As $c\in F^{\times}$, for all $\tau\in Aut_F(K)$ we have $$X^2-\tau(c)c^{-1}=X^2-cc^{-1}=X^2-1,$$ which always has the solutions $X=\pm 1$. This yields $J(c)=Aut_F(K)$. The result then follows from Theorem \ref{CDSExactNumberOfAutomorphisms}.
\end{proof}

As $J(c)\cap C(\sigma)$ forms a subgroup of $Aut_F(K)$, we know that $\left|J(c)\cap C(\sigma)\right|$ must divide $\left|Aut_F(K)\right|$. Due to this, we can easily determine the exact size of the automorphism group of $D(K,\sigma,c)$ in certain cases.

\begin{corollary} If $K$ is a field extension of prime degree $p$ over $F$, $J(c)$ is equal to either $\lbrace id\rbrace$ or $Aut_F(K)$. Further, $\left|Aut_F(D(K,\sigma,c))\right|\in\lbrace 2,2p\rbrace$.
\end{corollary}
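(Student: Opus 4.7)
The plan is to reduce both claims to basic facts about finite groups of field automorphisms, using the tools already established in the section.

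First I would argue that $Aut_F(K)$ itself has order $1$ or $p$. By Artin's theorem, if $G$ is any finite group of automorphisms of $K$ then $[K:Fix(G)]=|G|$; applying this to $G=Aut_F(K)$ and using $F\subseteq Fix(Aut_F(K))$ gives $|Aut_F(K)|\mid [K:F]=p$. Since by the standing hypothesis of this subsection $\sigma\in Aut_F(K)$ is non-trivial, we must have $|Aut_F(K)|=p$. Because $J(c)$ is a subgroup of $Aut_F(K)$ (Lemma \ref{JcSubgroup}) and $p$ is prime, Lagrange's theorem forces $J(c)=\{id\}$ or $J(c)=Aut_F(K)$, which is the first assertion.

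For the second assertion, I would invoke Theorem \ref{CDSExactNumberOfAutomorphisms}, which gives $|Aut_F(D(K,\sigma,c))|=2|J(c)\cap C(\sigma)|$. Since $Aut_F(K)$ has prime order it is cyclic, hence abelian, so every element commutes with $\sigma$ and $C(\sigma)=Aut_F(K)$. Therefore $J(c)\cap C(\sigma)=J(c)$, and combining with the first part yields $|Aut_F(D(K,\sigma,c))|\in\{2,2p\}$.

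There is no real obstacle here; the only subtle point is that we are not assuming $K/F$ is Galois (or even separable), so the bound $|Aut_F(K)|\mid [K:F]$ must be justified via Artin rather than the Galois correspondence. Once this is in place, the argument is essentially a one-line application of Lagrange's theorem combined with Theorem \ref{CDSExactNumberOfAutomorphisms}.
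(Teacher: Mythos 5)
Your proof is correct and takes essentially the same route as the paper: both establish that $Aut_F(K)$ is a group of prime order $p$ (hence cyclic and abelian), apply Lagrange to the subgroup $J(c)$, and then invoke the automorphism-count formula (you via Theorem \ref{CDSExactNumberOfAutomorphisms} with $C(\sigma)=Aut_F(K)$, the paper via the equivalent Corollary \ref{CDSExactNumberOfAutomorphismsAbelian}). You are slightly more careful than the paper in justifying $\lvert Aut_F(K)\rvert = p$ via Artin's lemma together with the standing assumption $\sigma\neq id$, a point the paper asserts without comment.
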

\begin{proof}
Let $[K:F]=p$ for some prime $p$. Then $Aut_F(K)$ is necessarily cyclic and hence abelian. As $\left|Aut_F(K)\right|=p$, we must have $\left|J(c)\right|\in \lbrace 1,p\rbrace$ and so $J(c)=\lbrace id\rbrace$ or $J(c)=Aut_F(K)$. The remainder of the result follows from Corollary \ref{CDSExactNumberOfAutomorphismsAbelian}.
 \end{proof}
 
\begin{corollary}\label{QpJc=Aut(K)} If $F=\mathbb{Q}_p$ for $p\neq 2$, then $J(c)=Aut_{\mathbb{Q}_p}(K)$ and $\left|Aut_{\mathbb{Q}_p}(D(K,\sigma,c))\right|=2\left| C(\sigma)\right|$.
\end{corollary}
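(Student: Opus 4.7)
The plan is to establish $J(c)=Aut_{\mathbb{Q}_p}(K)$ first; once that is done, we automatically have $J(c)\cap C(\sigma)=C(\sigma)$, and Theorem~\ref{CDSExactNumberOfAutomorphisms} immediately yields $\left|Aut_{\mathbb{Q}_p}(D(K,\sigma,c))\right|=2\left|C(\sigma)\right|$. So the entire content is verifying that for every $\tau\in Aut_{\mathbb{Q}_p}(K)$, the element $\tau(c)c^{-1}$ is a square in $K$.

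First I would show that $\tau(c)c^{-1}$ is necessarily a unit in $\mathcal{O}_K^{\times}$. The $p$-adic valuation on $\mathbb{Q}_p$ extends uniquely to a valuation $v_K$ on the finite extension $K$, so $v_K\circ\tau$ and $v_K$ both extend $v_p$ and must coincide. Hence $v_K(\tau(c))=v_K(c)$, and $\tau(c)c^{-1}\in\mathcal{O}_K^{\times}$. By the preceding lemma, $K^{\times}/(K^{\times})^2=\{1,u,\pi,u\pi\}$, so a unit is either a square or lies in the class of $u$; only the second possibility needs to be ruled out.

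To rule it out, I would write $c=s^2 t$ with $t\in\{1,u,\pi,u\pi\}$, noting that $\tau(c)c^{-1}=(\tau(s)s^{-1})^2\cdot\tau(t)t^{-1}$, so the question reduces to showing $\tau(t)t^{-1}\in (K^{\times})^2$ for each representative $t$. Since $\tau$ is a bijective $\mathbb{Q}_p$-algebra automorphism, it sends squares to squares and non-squares to non-squares, and since (as above) it preserves $v_K$, it preserves each of the four cosets in the structure lemma: the unit non-square class $u$ is sent to itself (because $\tau(u)$ is again a non-square unit, and $u$ generates the units modulo squares since $p\neq 2$), and analogously for the uniformizer classes. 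Hence $\tau(t)$ lies in the same coset as $t$, $\tau(t)t^{-1}$ is a square, and thus $\tau\in J(c)$.

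The step I expect to be the main obstacle is the last one: arguing that $\tau$ preserves the uniformizer coset $\pi\cdot(K^{\times})^2$. This is automatic in the unramified case (one may take $\pi=p$, which is fixed by $\tau$), but in the ramified case it requires a careful analysis of $\tau(\pi)/\pi$, which is a unit whose square class depends on the ramification data and the action of $\tau$ on the residue field. The fact that $p\neq 2$ is crucial here, because the Frobenius induced by $\tau$ on the residue field $\mathbb{F}_{p^f}$ raises elements to an odd prime power, and $x\mapsto x^{p^k-1}$ on $\mathbb{F}_{p^f}^{\times}$ always lands in the squares (as $p^k-1$ is even), which lifts via Hensel's lemma to the statement that the induced map on $\mathcal{O}_K^{\times}/(\mathcal{O}_K^{\times})^2$ is trivial. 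Combining these cases gives $J(c)=Aut_{\mathbb{Q}_p}(K)$, completing the proof.
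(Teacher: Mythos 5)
Your proposal correctly reduces the claim to showing every $\tau\in Aut_{\mathbb{Q}_p}(K)$ acts trivially on $K^{\times}/(K^{\times})^2$, and the unit-coset part is sound: the $p$-adic valuation extends uniquely to $K$, so $v_K\circ\tau=v_K$, and for $p\neq 2$ the residue-field/Hensel argument shows $\tau$ acts trivially on $\mathcal{O}_K^{\times}/(\mathcal{O}_K^{\times})^2$. This is already more explicit than the paper's one-line assertion that ``$\tau(c)$ and $c^{-1}$ clearly lie in the same coset.'' But the step you flag as the ``main obstacle'' --- $\tau$ preserving the uniformizer coset --- is left unresolved: triviality on $\mathcal{O}_K^{\times}/(\mathcal{O}_K^{\times})^2$ tells you $\tau(w)/w$ is a square for $w$ a unit, but $\tau(\pi)/\pi$ is a unit not of that form, and your ``combining these cases'' does not supply an argument that it is a square.

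In fact no argument can, because the statement is false in the ramified case. Take $p\equiv 3\pmod 4$, $K=\mathbb{Q}_p(\sqrt{p})$, $\sigma(\sqrt{p})=-\sqrt{p}$, and $c=\sqrt{p}$. Then $\sigma(c)c^{-1}=-1$, and $-1$ is not a square in $K$: the extension is totally ramified with residue field $\mathbb{F}_p$, so a unit is a square in $\mathcal{O}_K$ iff its reduction is a square in $\mathbb{F}_p$, and $-1$ is a non-square mod $p$ when $p\equiv 3\pmod 4$. Hence $\sigma\notin J(c)$ and $J(c)=\{\mathrm{id}\}\neq Aut_{\mathbb{Q}_p}(K)$; moreover $N_{K/\mathbb{Q}_p}(c)=-p$ has odd valuation, so this $c$ even yields a division algebra by Corollary~\ref{CDSDivisionNorm}. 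The conclusion does hold when $K/\mathbb{Q}_p$ is unramified (take $\pi=p$, fixed by every $\tau$), so the corollary --- and the paper's proof --- need an unramifiedness hypothesis or some restriction on the square class of $c$; as written, both the paper's argument and your proposed completion of it break down precisely at the uniformizer coset.
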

\begin{proof}
As $\tau(c)$ and $c^{-1}$ clearly lie in the same coset of $K^{\times}/(K^{\times})^2$, it follows that $\tau(c)c^{-1}\in K^2$ for all $\tau\in Aut_{\mathbb{Q}_p}(K)$. We conclude that $J(c)=Aut_{\mathbb{Q}_p}(K)$ and thus $\left|Aut_{\mathbb{Q}_p}(D(K,\sigma,c))\right|=2\left| C(\sigma)\right|$ by Theorem \ref{CDSExactNumberOfAutomorphisms}.
\end{proof}
 
Generally it is difficult to actually calculate $J(c)$, so we instead bound the size of $Aut_F(D(K,\sigma,c))$. We already have an upper bound as a consequence of Theorem \ref{CDSAutGroup}.
All the elements of $Aut_F(K)$ which act as the identity on $c$ form a subgroup of $Aut_F(K)$ called the \textit{isotropy group of $c$}, denoted by $$Aut_F(K)_c=\lbrace \tau\in Aut_F(K)\mid\tau(c)=c\rbrace.$$ By Corollary \ref{CDSAutSubgroupb1}, there is a subgroup of $Aut_F(D(K,\sigma,c))$ which is isomorphic to $C(\sigma)\cap Aut_F(K)_c.$ This allows us to bound the size of the automorphism group of $D(K,\sigma,c)$ from below:

\begin{theorem} There are between $2\left|C(\sigma)\cap Aut_F(K)_c\right|$ and $2\left|C(\sigma)\right|$ automorphisms of $D(K,\sigma, c)$.
\end{theorem}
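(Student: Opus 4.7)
The plan is to sandwich the exact count $2|J(c)\cap C(\sigma)|$ given by Theorem \ref{CDSExactNumberOfAutomorphisms} between the two quantities in the statement, via the chain of subgroups
\[
C(\sigma)\cap Aut_F(K)_c \;\subseteq\; J(c)\cap C(\sigma) \;\subseteq\; C(\sigma).
\]
The rightmost inclusion is immediate by definition of intersection. For the left inclusion, I would observe that if $\tau\in Aut_F(K)_c$ then $\tau(c)c^{-1}=1=1^2$, so $X^2-\tau(c)c^{-1}$ splits over $K$, and hence $\tau\in J(c)$; intersecting with $C(\sigma)$ gives the desired containment.

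With this chain of subgroups established, I would take cardinalities (using Lagrange together with the fact that $J(c)\cap C(\sigma)$ and $C(\sigma)\cap Aut_F(K)_c$ are finite subgroups of $Aut_F(K)$ by Lemma \ref{JcSubgroup}) to deduce
\[
|C(\sigma)\cap Aut_F(K)_c| \;\leq\; |J(c)\cap C(\sigma)| \;\leq\; |C(\sigma)|.
\]
Multiplying through by $2$ and applying Theorem \ref{CDSExactNumberOfAutomorphisms}, which gives $|Aut_F(D(K,\sigma,c))| = 2|J(c)\cap C(\sigma)|$, yields the claimed two-sided bound.

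There is essentially no obstacle in this proof: the upper bound is a direct consequence of Theorem \ref{CDSExactNumberOfAutomorphisms}, while the lower bound reduces to the trivial observation that fixing $c$ is a stronger condition than making $\tau(c)c^{-1}$ a square. Conceptually, the lower bound corresponds to the automorphisms coming from Corollary \ref{CDSAutSubgroupb1} (choosing $b=\pm 1$ for each $\tau$ fixing $c$ and commuting with $\sigma$), which already produces $2|C(\sigma)\cap Aut_F(K)_c|$ automorphisms, while the upper bound records that at worst every $\tau\in C(\sigma)$ lies in $J(c)$ and so contributes the maximum of two automorphisms each.
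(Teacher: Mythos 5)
Your proof is correct and takes essentially the same approach as the paper. The paper phrases the lower bound by directly exhibiting the automorphisms $(x,y)\mapsto(\tau(x),\pm\tau(y))$ for $\tau\in C(\sigma)\cap Aut_F(K)_c$ via Theorem \ref{CDSAutGroup}, whereas you package the same observation as the inclusion $C(\sigma)\cap Aut_F(K)_c\subseteq J(c)\cap C(\sigma)$ and then invoke the exact count from Theorem \ref{CDSExactNumberOfAutomorphisms}; these are two presentations of the identical underlying fact.
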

\begin{proof}
It is clear that $J(c)\cap C(\sigma)$ is a subgroup of $C(\sigma)$. Each $\tau\in C(\sigma)$ can be used to construct at most 2 automorphisms of $D(K,\sigma,c)$ corresponding to the two possible solutions of $X^2-\tau(c)c^{-1}$, so we have $\left|Aut_F(D(K,\sigma,c))\right|\leqslant 2\left|C(\sigma)\right|.$\\
Additionally, each $\tau\in C(\sigma)\cap Aut_F(K)_c$ can be used to construct the maps $(x,y)\mapsto (\tau(x),\pm\tau(y)).$ It follows from Theorem \ref{CDSAutGroup} that these are automorphisms of $D(K,\sigma,c)$, so $2\left|C(\sigma)\cap Aut_F(K)_c\right|\leqslant \left|Aut_F(D(K,\sigma,c))\right|$.
\end{proof}

Wene \cite{Wene} derived an alternative description of the automorphism group of $D(K,\sigma,c)$ when $K$ is a finite field, in terms of inner automorphisms. An automorphism $\theta$ of $D(K,\sigma,c)$ is an \textit{inner automorphism} if there exists $m\in D(K,\sigma,c)$ with left inverse $m_l^{-1}$ such that $$\theta(x)=(m_l^{-1}x)m$$ for all $x\in D(K,\sigma,c)$. The proof given in \cite[Theorem 18]{Wene} holds verbatim for any finite field extension, yielding a sufficient condition for the existence of (nontrivial) inner automorphisms of a commutative Dickson algebra:

\begin{theorem}[\cite{Wene}, Theorem 18]
Let $D(K,\sigma,c)$ be a division algebra. Denote $\lambda=(0,1)$. Then $$\Phi(x,y)=[\lambda_l^{-1}(x,y)]\lambda=(\sigma(x),\sigma(y))$$ defines an inner automorphism of $D(K,\sigma,c)$ if and only if $\sigma(c)=c$.
\end{theorem}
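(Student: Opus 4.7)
My plan is to carry out the proof in three steps: find $\lambda_l^{-1}$ explicitly, verify the formula $\Phi(x,y)=(\sigma(x),\sigma(y))$ by direct computation, and then read off the automorphism condition from Theorem \ref{CDSAutGroup}.

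First I would locate the left inverse of $\lambda=(0,1)$. Because $D(K,\sigma,c)$ is a division algebra by hypothesis, left multiplication by $\lambda$ is bijective, so $\lambda_l^{-1}$ exists. Writing $\lambda_l^{-1}=(a,b)$ and imposing $(a,b)(0,1)=(1,0)$, the multiplication formula of $D(K,\sigma,c)$ gives $(c\sigma(b),a)=(1,0)$, so $a=0$ and $b=\sigma^{-1}(c^{-1})$. Hence $\lambda_l^{-1}=(0,\sigma^{-1}(c^{-1}))$.

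Next I would substitute this into $\Phi(x,y)=[\lambda_l^{-1}(x,y)]\lambda$. A single application of the product rule yields $\lambda_l^{-1}(x,y)=(0,\sigma^{-1}(c^{-1}))(x,y)=(\sigma(y),\sigma^{-1}(c^{-1})x)$, using that $c\sigma(\sigma^{-1}(c^{-1})y)=\sigma(y)$. Multiplying by $\lambda=(0,1)$ on the right then gives $[\lambda_l^{-1}(x,y)]\lambda=(c\sigma(\sigma^{-1}(c^{-1})x),\sigma(y))=(\sigma(x),\sigma(y))$, confirming the explicit formula claimed in the statement. Note that this formula defines a map $\Phi:D\to D$ regardless of whether $\sigma(c)=c$; what is in question is whether it is multiplicative.

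Finally, I would apply Theorem \ref{CDSAutGroup} to the map $\Phi(x,y)=(\sigma(x),\sigma(y))$, which has the required shape $(\tau(x),\tau(y)b)$ with $\tau=\sigma$ and $b=1$. The theorem tells us $\Phi$ is an automorphism of $D(K,\sigma,c)$ if and only if $\tau$ and $\sigma$ commute and $\tau(c)=c\sigma(b^2)$. The commutativity condition is automatic since $\tau=\sigma$, and the second condition reduces to $\sigma(c)=c$. This gives the biconditional and completes the proof. The only mildly delicate step is the first one, where invertibility (hence division-algebra hypothesis) is needed so that $\lambda_l^{-1}$ exists in the first place; after that everything is a direct application of the multiplication rule and of Theorem \ref{CDSAutGroup}.
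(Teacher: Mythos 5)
Your proof is correct. The paper itself does not spell out an argument here; it simply notes that the proof of Wene's Theorem 18 carries over verbatim to arbitrary finite field extensions. Wene's original argument computes $\lambda_l^{-1}$ and then checks multiplicativity of $\Phi$ by hand, reading off $\sigma(c)=c$ from the resulting identity. You do the first half the same way — your computation that $\lambda_l^{-1}=(0,\sigma^{-1}(c^{-1}))$, that $\lambda_l^{-1}(x,y)=(\sigma(y),\sigma^{-1}(c^{-1})x)$, and hence that $\Phi(x,y)=(\sigma(x),\sigma(y))$, is exactly right and uses the division hypothesis only to guarantee the existence of $\lambda_l^{-1}$. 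But for the second half you replace the raw multiplicativity check with an appeal to the paper's own Theorem \ref{CDSAutGroup}: since $\Phi$ has the canonical shape $(\tau(x),\tau(y)b)$ with $\tau=\sigma$, $b=1$ (and this decomposition is forced, taking $y=0$ to read off $\tau$ and then $\tau(y)b=\sigma(y)$ to read off $b$), the criterion $\tau(c)=c\,\sigma(b^2)$ collapses to $\sigma(c)=c$, and the commutation condition is automatic. This is a cleaner route than a direct verification and gives both directions of the biconditional at once; the only thing it costs is reliance on Theorem \ref{CDSAutGroup}, which in particular imports that section's standing hypotheses ($\operatorname{char} F\neq 2$, $\sigma\neq \mathrm{id}$), whereas a direct check à la Wene would not.
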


\subsection{The group structure of $Aut_F(D)$}

By Theorem \ref{CDSAutGroup}, we know that all the $2\left|C(\sigma)\cap J(c)\right|$ automorphisms of $D$ are of the form $G(u,v)=(\tau(u),\tau(v)b)$ for some $\tau\in C(\sigma)\cap J(c)$ and $b\in K^{\times}$ such that $\sigma(b)^2=\tau(c)c^{-1}$. Note that this final condition is equivalent to $b\in K^{\times}$ being a solution of $X^2-\sigma^{-1}(\tau(c)c^{-1})\in K[X].$ We will denote the solutions of this polynomial by $b_{\tau,1}$ and $b_{\tau,2}$. As the characteristic of $F$ is not 2, it is clear that $b_{\tau,2}=-b_{\tau,1}$.

\begin{lemma}\label{StructureofAutGroupPowers} Let $b_{\tau,1},b_{\tau,2}$ be the two solutions of $X^2-\sigma^{-1}(\tau(c)c^{-1})$ and suppose $\tau^n=id$. Then $b_{\tau,i}\tau(b_{\tau,i})\tau^2(b_{\tau,i})...\tau^{n-1}(b_{\tau,i})=\pm 1.$\\ Moreover, if $n$ is odd, we have $b_{\tau,i}\tau(b_{\tau,i})\tau^2(b_{\tau,i})...\tau^{n-1}(b_{\tau,i})=1$ for $i=1$ or $i=2$, but not both.
\end{lemma}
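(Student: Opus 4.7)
The plan is to square the product and use a telescoping argument. Set $P_i := b_{\tau,i}\tau(b_{\tau,i})\tau^2(b_{\tau,i})\cdots\tau^{n-1}(b_{\tau,i})$; I aim to show $P_i^2 = 1$.

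First I would rewrite the defining equation: since $b_{\tau,i}$ is a root of $X^2 - \sigma^{-1}(\tau(c)c^{-1})$, we have
\[
b_{\tau,i}^2 = \sigma^{-1}(\tau(c)c^{-1}) = \sigma^{-1}(\tau(c))\cdot \sigma^{-1}(c)^{-1}.
\]
Now apply $\tau^k$ to both sides. Because $\tau \in C(\sigma)$, i.e.\ $\tau\circ\sigma = \sigma\circ\tau$ (and hence $\tau$ commutes with $\sigma^{-1}$), this gives
\[
\tau^k(b_{\tau,i})^2 \;=\; \sigma^{-1}\!\bigl(\tau^{k+1}(c)\cdot\tau^k(c)^{-1}\bigr).
\]

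Next I would multiply these over $k = 0, 1, \dots, n-1$. Since $\sigma^{-1}$ is multiplicative,
\[
P_i^2 \;=\; \prod_{k=0}^{n-1}\tau^k(b_{\tau,i})^2 \;=\; \sigma^{-1}\!\Bigl(\prod_{k=0}^{n-1}\tau^{k+1}(c)\tau^k(c)^{-1}\Bigr).
\]
The inner product telescopes to $\tau^n(c)\cdot c^{-1}$, which equals $1$ by the hypothesis $\tau^n = id$. Thus $P_i^2 = \sigma^{-1}(1) = 1$, giving $P_i = \pm 1$ as required.

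For the second assertion, observe that since $\mathrm{char}(F)\neq 2$ the two roots satisfy $b_{\tau,2} = -b_{\tau,1}$, and each $\tau^k$ is $F$-linear so $\tau^k(b_{\tau,2}) = -\tau^k(b_{\tau,1})$. Therefore
\[
P_2 \;=\; \prod_{k=0}^{n-1}\bigl(-\tau^k(b_{\tau,1})\bigr) \;=\; (-1)^n P_1.
\]
When $n$ is odd this reads $P_2 = -P_1$, and since $P_1, P_2 \in \{\pm 1\}$ we conclude $\{P_1, P_2\} = \{1,-1\}$, so exactly one of them equals $1$. There is no serious obstacle here; the only thing to be careful about is invoking $\tau\circ\sigma = \sigma\circ\tau$ in order to pull the $\sigma^{-1}$ outside the product so the telescoping in $c$ can take place.
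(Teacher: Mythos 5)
Your proof is correct and takes essentially the same approach as the paper's: both rely on the commutativity of $\tau$ with $\sigma$ to produce a telescoping product that collapses to $\sigma^{-1}(\tau^n(c)c^{-1}) = 1$, and both handle the odd-$n$ case via $b_{\tau,2} = -b_{\tau,1}$. The only cosmetic difference is that you square the full product at once and telescope directly, whereas the paper builds it up recursively as in Lemma~\ref{JcSubgroup} by showing that $\pm\phi(b_\tau)b_\phi$ solves $X^2-\sigma^{-1}(\phi\circ\tau(c)c^{-1})$; the underlying computation is the same.
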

\begin{proof}
As in the proof of Lemma \ref{JcSubgroup}, if $b_{\tau}$ and $b_{\phi}$ are solutions of $X^2-\sigma^{-1}(\tau(c)c^{-1})$ and $X^2-\sigma^{-1}(\phi(c)c^{-1})$ respectively then the equation $$X^2-\sigma^{-1}(\phi\circ\tau(c)c^{-1})$$ has the solutions $X=\pm\phi(b_{\tau})b_{\phi}$. Similarly the equation $X^2-\sigma^{-1}(\tau^2(c)c^{-1})$ has the solutions $X=\pm\tau(b_{\tau})b_{\tau}$, the equation $X^2-\sigma^{-1}(\tau^3(c)c^{-1})$ has the solutions $X=\pm\tau(b_{\tau^2})b_{\tau}=\tau^2(b_\tau)\tau(b_{\tau})b_\tau$, and so on. Hence we see that for $i=1,2$ $$b_{\tau,i}\tau(b_{\tau,i})\tau^2(b_{\tau,i})...\tau^{n-1}(b_{\tau,i})$$ is a solution of $X^2-\sigma^{-1}(\tau^n(c)c^{-1})$. As $\tau^n=id$, we also conclude that the solutions of $$X^2-\sigma^{-1}(\tau^n(c)c^{-1})=X^2-\sigma^{-1}(cc^{-1})=X^2-1$$ are $X=\pm 1$ and so $b_{\tau,i}\tau(b_{\tau,i})\tau^2(b_{\tau,i})...\tau^{n-1}(b_{\tau,i})=\pm 1.$ As $b_{\tau,2}=-b_{\tau,1}$, we have $$b_{\tau,2}\tau(b_{\tau,2})\tau^2(b_{\tau,2})...\tau^{n-1}(b_{\tau,2})\\
= (-1)^nb_{\tau,1}\tau(b_{\tau,1})\tau^2(b_{\tau,1})...\tau^{n-1}(b_{\tau,1}).$$
If $n$ is odd, this implies that $b_{\tau,2}\tau(b_{\tau,2})\tau^2(b_{\tau,2})...\tau^{n-1}(b_{\tau,2})= -b_{\tau,1}\tau(b_{\tau,1})\tau^2(b_{\tau,1})...\tau^{n-1}(b_{\tau,1})$ and the result follows.
\end{proof}

\begin{theorem}
For all $D(K,\sigma,c)$, we have $$Aut_F(D(K,\sigma,c))\cong (C(\sigma)\cap J(c))\times \mathbb{F}_2.$$
\end{theorem}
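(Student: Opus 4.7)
The plan is to present every automorphism in the parametric form $G_{\tau,b}$ from Theorem \ref{CDSAutGroup}, identify the natural projection onto $C(\sigma) \cap J(c)$ as a surjective group homomorphism with central kernel of order two, and then produce a complementary subgroup via a coherent choice of square roots.

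First, by Theorem \ref{CDSAutGroup}, writing $G_{\tau,b}(u,v) = (\tau(u), \tau(v)b)$, every automorphism is $G_{\tau,b}$ for some $\tau \in C(\sigma) \cap J(c)$ and some $b \in K^{\times}$ with $\sigma(b)^{2} = \tau(c) c^{-1}$. A direct composition calculation gives $G_{\tau,b} \circ G_{\phi,b'} = G_{\tau\phi,\, \tau(b')b}$, so $\pi(G_{\tau,b}) := \tau$ is a well-defined surjective group homomorphism $Aut_{F}(D) \to C(\sigma) \cap J(c)$. Its kernel is $\{G_{id,1}, G_{id,-1}\}$, and the non-identity element $J := G_{id,-1}:(x,y)\mapsto(x,-y)$ has order $2$ (as $\mathrm{char}\,F\neq 2$) and satisfies $J \circ G_{\tau,b} = G_{\tau,-b} = G_{\tau,b} \circ J$, so $\langle J \rangle \cong \mathbb{F}_{2}$ is a central subgroup.

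Next I construct a section $s: C(\sigma) \cap J(c) \to Aut_{F}(D)$ of $\pi$ that is itself a group homomorphism. For each $\tau$ there are precisely two candidates $b_{\tau,1} = -b_{\tau,2}$ for the right component; setting $s(\tau) = G_{\tau, b_{\tau}}$, the condition for $s$ to respect composition is the cocycle identity $b_{\tau\phi} = \tau(b_{\phi})\,b_{\tau}$. Lemma \ref{StructureofAutGroupPowers} supplies exactly this compatibility along every cyclic subgroup, since it allows one to force $b_{\tau}\,\tau(b_{\tau}) \cdots \tau^{n-1}(b_{\tau}) = 1$ whenever $\tau$ has order $n$; propagating this choice coherently across a generating set of $C(\sigma) \cap J(c)$ yields the desired section.

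With $s$ in hand, the map $\Psi: (C(\sigma) \cap J(c)) \times \mathbb{F}_{2} \to Aut_{F}(D)$ defined by $(\tau,\epsilon) \mapsto s(\tau) \circ J^{\epsilon}$ is a group homomorphism because $J$ is central, surjective because every $G_{\tau,b}$ equals $s(\tau)$ or $s(\tau)\circ J$, and injective because $\langle J \rangle \cap s(C(\sigma)\cap J(c)) = \{id\}$. The main obstacle is the coherent global choice of $b_{\tau}$: this amounts to showing that the class of the sign cocycle $\omega(\tau,\phi) := b_{\tau\phi}^{-1}\tau(b_{\phi})b_{\tau} \in \{\pm 1\}$ vanishes in $H^{2}(C(\sigma)\cap J(c), \mathbb{F}_{2})$. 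Lemma \ref{StructureofAutGroupPowers} handles the restriction to cyclic subgroups, and the technical heart of the argument is controlling how the sign ambiguity combines on products of non-commuting generators to exhibit $\omega$ as a coboundary.
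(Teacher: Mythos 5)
Your extension-theoretic framing is a cleaner account of the same strategy the paper uses. You correctly identify $Aut_F(D)$ as a central extension of $C(\sigma)\cap J(c)$ by $\mathbb{F}_2$ via $\pi(G_{\tau,b})=\tau$, with central kernel $\langle J\rangle=\{G_{id,\pm 1}\}$, and you correctly reduce the theorem to the vanishing of the associated $2$-cocycle $\omega$ in $H^2(C(\sigma)\cap J(c),\mathbb{F}_2)$; the paper's proof similarly fixes a root $b_{\tau_i}$ on a generating set and propagates recursively. However, you have not closed the argument, and there is a misstatement that masks where the difficulty lies. You write that Lemma~\ref{StructureofAutGroupPowers} ``allows one to force $b_\tau\,\tau(b_\tau)\cdots\tau^{n-1}(b_\tau)=1$ whenever $\tau$ has order $n$.'' That is only what the lemma gives for odd $n$: when $n$ is even, $(-1)^n=1$, so swapping $b_\tau$ for $-b_\tau$ leaves the telescoping product unchanged, and the lemma only guarantees the product is $\pm 1$. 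If it is $-1$ for an even-order $\tau$, then $s(\tau)^n=J\neq id$ for \emph{any} choice of the root, so the extension already fails to split when restricted to $\langle\tau\rangle$ and no section exists at all. So the lemma does not ``supply exactly this compatibility along every cyclic subgroup,'' only along the odd-order ones.

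Beyond that, your last sentence explicitly defers ``controlling how the sign ambiguity combines on products of non-commuting generators to exhibit $\omega$ as a coboundary'' --- but that deferred step \emph{is} the theorem; identifying the obstruction as a class in $H^2$ does not show it vanishes. As written the proposal establishes the split-extension framework and the structure of the kernel, but leaves the splitting itself unproved. (For what it is worth, the paper's own proof shares these gaps: it only relabels roots when $n$ is odd, says nothing about even-order generators, and does not check that the recursively defined $b_\phi$ is independent of the chosen word for $\phi$ in the generators. You were right to flag the cocycle as the crux; the flaw is in asserting Lemma~\ref{StructureofAutGroupPowers} disposes of it.)
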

\begin{proof}
As $C(\sigma)\cap J(c)$ is a finite group, there exists a minimal generating set $\lbrace \tau_1,...,\tau_m\rbrace.$ Let $\tau$ be an element of this generating set and let $b_{\tau,i}$ ($i=1,2$) be the two roots of $X^2-\sigma^{-1}(\tau(c)c^{-1})$. As $J(c)$ is a finite group, $\tau^n$ must be equal to the identity for some $n>1$. By Lemma \ref{StructureofAutGroupPowers}, this implies $$b_{\tau,i}\tau(b_{\tau,i})\tau^2(b_{\tau,i})...\tau^{n-1}(b_{\tau,i})=\pm 1$$ for $i=1,2$. If $n$ is odd, relabel the roots such that $b_{\tau,1}$ satisfies $$b_{\tau,1}\tau(b_{\tau,1})\tau^2(b_{\tau,1})...\tau^{n-1}(b_{\tau,1})=1$$ and $b_{\tau,2}$ satisfies $$b_{\tau,2}\tau(b_{\tau,2})\tau^2(b_{\tau,2})...\tau^{n-1}(b_{\tau,2})=-1.$$ Henceforth, we will denote $b_{\tau,1}=b_\tau$. Now let $\phi\in C(\sigma)\cap J(c)$. As $\lbrace \tau_1,...,\tau_m\rbrace$ generates $C(\sigma)\cap J(c)$, $\phi$ can be expressed as a product of the $\tau_i$. Due to this, we can construct the roots of $X^2-\sigma^{-1}(\phi(c)c^{-1})$ from the $b_{\tau_i}$. For example, if $\phi=\tau_i\circ\tau_j$ then we obtain $$b_{\phi}=b_{\tau_i}\tau_i(b_{\tau_j}).$$ This method can be applied recursively to construct the roots of $X^2-\sigma^{-1}(\tau(c)c^{-1})$ for all $\tau\in C(\sigma)\cap J(c)$.\\
We can now express all automorphisms of $D$ in the form $G(u,v)=(\tau(u),\pm\tau(v)b_{\tau})$ for some $\tau\in J(c)\cap C(\sigma)$ and $b_{\tau}$ as defined above. Define a map $\Phi: Aut_F(D)\to (J(c)\cap C(\sigma))\times \mathbb{F}_2$ by $$\Phi(G)=(\tau,\pm 1).$$ This map is well-defined due to the careful labelling of roots of $X^2-\sigma^{-1}(\tau(c)c^{-1})$. It is easy to see that it gives an isomorphism between groups.
\end{proof}

\begin{corollary} If $F=\mathbb{Q}_p$, then $Aut_{\mathbb{Q}_p}(D(K,\sigma,c))\cong C(\sigma)\times \mathbb{F}_2.$
\end{corollary}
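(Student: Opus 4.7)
The plan is to obtain this corollary as an immediate consequence of the preceding theorem $Aut_F(D(K,\sigma,c))\cong (C(\sigma)\cap J(c))\times\mathbb{F}_2$, which holds over any field of characteristic not $2$, combined with Corollary \ref{QpJc=Aut(K)}, which identifies $J(c)$ completely when the base field is $\mathbb{Q}_p$ (for $p\neq 2$).

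First I would recall that by Corollary \ref{QpJc=Aut(K)}, whenever $F=\mathbb{Q}_p$, every element $\tau(c)c^{-1}$ with $\tau\in Aut_{\mathbb{Q}_p}(K)$ lies in $(K^\times)^2$, since $\tau(c)$ and $c$ sit in the same coset of $K^\times/(K^\times)^2$. Thus $J(c)=Aut_{\mathbb{Q}_p}(K)$. Intersecting with the centraliser $C(\sigma)\subseteq Aut_{\mathbb{Q}_p}(K)$ then gives $C(\sigma)\cap J(c)=C(\sigma)$.

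Substituting this equality into the structural result of the preceding theorem yields
\[
Aut_{\mathbb{Q}_p}(D(K,\sigma,c))\cong (C(\sigma)\cap J(c))\times\mathbb{F}_2\cong C(\sigma)\times\mathbb{F}_2,
\]
which is the claimed isomorphism.

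There is essentially no obstacle here: the two ingredients have both already been established, and the argument is a direct specialisation. The only minor point worth checking is that the hypothesis $\operatorname{char} F\neq 2$ required by the structure theorem is automatically satisfied since we assume $p\neq 2$ (and indeed $\mathbb{Q}_p$ has characteristic $0$), so the preceding theorem may be invoked without modification.
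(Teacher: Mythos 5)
Your argument is exactly the paper's: the paper proves the corollary with the single line ``This follows from Corollary \ref{QpJc=Aut(K)}.'', which, unpacked, is precisely your chain of substituting $J(c)=Aut_{\mathbb{Q}_p}(K)$ into the preceding structure theorem to get $C(\sigma)\cap J(c)=C(\sigma)$. Your version is just a more explicit write-up of the same reasoning, with the additional (correct) remark that the characteristic hypothesis is satisfied.
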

\begin{proof}
This follows from Corollary \ref{QpJc=Aut(K)}.
\end{proof}

Thus it is sufficient to consider the subgroups of $Aut_F(K)$, $C(\sigma)$ and $J(c)$, in order to determine the structure of the automorphism groups of these algebras.

\section{A generalisation of the commutative Dickson algebras construction obtained by doubling a central simple algebra}\label{FurtherGeneralisingDicksonAlgebras}

Let $B$ be a central simple algebra over $F$. Let $\sigma\in Aut_F(B)$ be a non-trival automorphism and $c\in B^{\times}$. As $B$ is not commutative, we can generalise the classical Dickson multiplication on the $F$-vector space $B\oplus B$ in three ways: \begin{itemize}
\item $(u,v)\circ(x,y)=(ux+c\sigma(vy),uy+vx),$
\item $(u,v)\circ(x,y)=(ux+\sigma(v)c\sigma(y),uy+vx),$
\item $(u,v)\circ(x,y)=(ux+\sigma(vy)c,uy+vx).$
\end{itemize}
We denote the $F$-vector space $B\oplus B$ endowed with each of these multiplications by $D(B,\sigma,c)$, $D_m(B,\sigma,c)$ and $D_r(B,\sigma,c)$, respectively. If $c\in F^{\times}$, the three constructions are identical. All three constructions yield unital nonassociative algebras over $F$ and are canonical generalisations of the commutative construction defined by Dickson.

\begin{lemma}\label{CommD}\begin{enumerate}[(i)]
\item Let $D=D(B,\sigma,c)$ or $D=D_r(B,\sigma,c)$. Then $Comm(D)=F\oplus F$.
\item Let $D=D_m(B,\sigma,c)$. If $c\in F^{\times}$, then $Comm(D)=F\oplus F$. Otherwise, $Comm(D)=F$.
\end{enumerate}
\end{lemma}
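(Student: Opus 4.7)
The plan is to analyze when $(u,v)(x,y) = (x,y)(u,v)$ holds for every $(x,y) \in B \oplus B$ in each of the three constructions. The inclusions $F \oplus F \subseteq \mathrm{Comm}(D)$ in case (i) and the relevant subcase of (ii), and $F \subseteq \mathrm{Comm}(D_m)$ in the remaining subcase, are immediate because $F$ is central in $B$ and $\sigma$ fixes $F$ pointwise; so the real content is the reverse containment.

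First I would observe that the second coordinate of the product is the same across all three multiplications, so that the condition ``second coordinates agree'' is $uy + vx = xv + yu$ for all $x,y \in B$. Setting $x = 0$ forces $uy = yu$ for all $y$, and $y = 0$ forces $vx = xv$ for all $x$; since $B$ is central simple with $Z(B) = F$, this yields $u, v \in F$. This single step handles the second coordinate uniformly in all three constructions and already narrows $\mathrm{Comm}(D)$ down to a subspace of $F \oplus F$.

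Having forced $u, v \in F$, I would then examine the first coordinate case by case. For $D$ and $D_r$, since $\sigma(v) = v$ and $v$ is central in $B$, the terms $c\sigma(vy)$ and $c\sigma(yv)$ (respectively $\sigma(vy)c$ and $\sigma(yv)c$) coincide and $ux = xu$ automatically, so the first-coordinate condition is vacuous; this completes part (i). For $D_m$, after applying $\sigma(v) = v$, the first-coordinate equation reduces to $v\bigl(c\sigma(y) - \sigma(y)c\bigr) = 0$ for all $y \in B$. Since $\sigma$ is a bijection, $\sigma(y)$ ranges over all of $B$, so this forces either $v = 0$ or $c \in Z(B) = F$. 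This dichotomy directly yields part (ii): when $c \in F^\times$ no restriction on $v$ is produced and $\mathrm{Comm}(D_m) = F \oplus F$, while when $c \notin F$ we must have $v = 0$ and $\mathrm{Comm}(D_m) = F$.

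The one step requiring genuine care is the $D_m$ analysis, where both the centrality of $v$ (used to move $v$ past $c$ inside $\sigma(v)c\sigma(y)$ and $\sigma(y)c\sigma(v)$) and the surjectivity of $\sigma$ (used to extend the commutativity condition from all $\sigma(y)$ to all of $B$) must be invoked to conclude $c \in Z(B)$. All other verifications are routine substitutions into the multiplication formulas.
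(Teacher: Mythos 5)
Your proof is correct and, if anything, more careful than the paper's. Both are direct computations, but you structure the argument differently: you first observe that the second coordinate of the product is $uy+vx$ in all three constructions, and use the conditions at $x=0$ and at $y=0$ to force $u,v\in Z(B)=F$ uniformly, before examining the first coordinate. The paper instead tests against $(x,0)$ for part (i) and against $(0,x)$ for part (ii), and in (ii) asserts that $\sigma(v)c\sigma(x)=\sigma(x)c\sigma(v)$ for all $x$ already forces $v=0$ when $c\notin F$. That assertion is not literally correct: taking $x=1$ gives $\sigma(v)c=c\sigma(v)$, whence $c\sigma(v)\in Z(B)=F$, and any $v$ with $\sigma(v)=\lambda c^{-1}$ ($\lambda\in F^\times$) satisfies the displayed equation for all $x$ without being zero. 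What actually rules such $v$ out is the condition coming from $(x,0)$, namely $vx=xv$ for all $x$, which the paper uses in part (i) but not in (ii). Your approach builds in $v\in F$ from the start, so the $D_m$ first-coordinate analysis cleanly reduces to $v\bigl(c\sigma(y)-\sigma(y)c\bigr)=0$ for all $y$, and the dichotomy $v=0$ or $c\in F$ falls out without any hidden step. In short: same basic strategy, but your ordering of the checks repairs a small gap in the paper's proof of (ii).
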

\begin{proof}
\begin{enumerate}[(i)]
\item We only show the proof for $D(B,\sigma,c)$ as the proof for $D_r(B,\sigma,c)$ follows identically. Let $(u,v)\in Comm(D)$. Then for all $x\in B$, we have $$(u,v)(x,0)=(x,0)(u,v).$$ This is equivalent to $ux=xu$ and $vx=xv.$ This holds for all $x\in B$ if and only if both $u$ and $v$ lie in the centre of $B$. Hence $Comm(D)\subseteq F\oplus F$. It is easily checked that all elements of $F\oplus F$ are contained in $Comm(D)$. Hence $Comm(D)=F\oplus F$.
\item Let $(u,v)\in Comm(D)$. Then for all $x\in B$, we have $(u,v)(0,x)=(0,x)(u,v).$ This is equivalent to $\sigma(v)c\sigma(x)=\sigma(x)c\sigma(v)$ and $ux=xu.$
The second equation implies that $u\in Z(B)=F$. If $c\not\in F$, then the first equation is only satisfied for all $x\in B$ when $v=0$, which yields $Comm(D)=F$.\\
If $c\in F^{\times}$, we have $D_m(B,\sigma,c)=D(B,\sigma,c)$ and so by (i), we obtain that $Comm(D)=F\oplus F.$
\end{enumerate}
\end{proof}

\begin{theorem}\label{CSANucleus}
Let $D=D(B,\sigma,c)$. Then \begin{itemize}
\item $Nuc_l(D)=\lbrace k\in B \mid c\sigma(k)=kc\rbrace\subset B$,
\item $Nuc_m(D)=B$,
\item $Nuc_r(D)=Fix(\sigma).$
\end{itemize}
In particular, $Nuc(D)=Fix(\sigma)\cap\lbrace k\in B\mid c\sigma(k)=kc\rbrace$ and $Z(D)=F$.
\end{theorem}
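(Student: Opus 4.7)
The plan is to compute associators $[\alpha,\beta,\gamma]$ in $D=D(B,\sigma,c)$ directly from the defining multiplication, working through each of the three positions in turn. For each nucleus I will first verify by explicit calculation that the claimed set lies inside the nucleus, and then extract necessary conditions by specialising the other two arguments. Finally the centre formula drops out by intersecting $Nuc(D)$ with $Comm(D)$, which is already known from Lemma \ref{CommD}.

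For $Nuc_m(D)$ the inclusion $B\oplus 0\subseteq Nuc_m(D)$ is immediate: both $((u,v)(k,0))(x,y)$ and $(u,v)((k,0)(x,y))$ evaluate to the same pair $(ukx+c\sigma(vky),uky+vkx)$ on the nose. To eliminate the second coordinate of any element of $Nuc_m(D)$, I compute $[(0,y),(0,k_2),(x,0)]$; its first component simplifies to $c\sigma(y)\sigma(k_2)(x-\sigma(x))$, and letting $y$ and $x$ range independently over $B$ produces $B\sigma(k_2)N=0$ with $N=\mathrm{Im}(id-\sigma)\neq 0$.

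For $Nuc_l(D)$ I expand $[(k_1,k_2),(x,y),(x',y')]$ and collect terms: the first component reduces to $(c\sigma(k_1)-k_1c)\sigma(yy')$ plus a $k_2$-dependent piece, with a symmetric expression in the second component. Specialising $y=y'=1$, $x=x'=0$ forces $c\sigma(k_i)=k_ic$; further specialisations isolate a condition on $k_2$ alone that drives $k_2$ to zero. The converse is a one-line associator check. For $Nuc_r(D)$ I proceed symmetrically with $(k_1,k_2)$ as the third entry: the associator $[(0,1),(0,1),(k_1,k_2)]$ evaluates to $(c(k_1-\sigma(k_1)),c(k_2-\sigma(k_2)))$, forcing $k_1,k_2\in Fix(\sigma)$ at once, after which $[(u,0),(0,y),(k_1,k_2)]$ yields first component $(c\sigma(u)-uc)\sigma(y)k_2$, which must vanish for all $u,y\in B$.

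The hard part is closing each ``$k_2=0$'' step, since in every case I am reduced to an identity of the shape $aBk_2=0$ (or $\sigma(k_2)BN=0$) with hypothetically $k_2\neq 0$. The leverage comes from the central simple structure of $B$: the two-sided ideal $Bk_2B$ equals $B$ whenever $k_2\neq 0$, so $aBk_2=0$ upgrades to $aBk_2B=aB=0$, forcing $a=0$ and hence the required vanishing. For the middle nucleus a short bootstrap from $\sigma(k_2)N=0$ to $\sigma(k_2)BN=0$ (via multiplicativity of $\sigma$ applied to products $xy$) precedes this step. Combining the three descriptions gives $Nuc(D)=Fix(\sigma)\cap\{k\in B:c\sigma(k)=kc\}$ embedded in the first summand, and then $Z(D)=Nuc(D)\cap Comm(D)=Nuc(D)\cap(F\oplus F)=F$ by Lemma \ref{CommD}(i), since $F\subseteq Fix(\sigma)$ and $F$ centralises $c$.
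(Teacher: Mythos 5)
Your overall plan matches the paper's: both compute associators directly and specialise the free entries to isolate the constraints $l=0$ and $c\sigma(k)=kc$; the paper only writes out $Nuc_l(D)$ and declares the other two ``similar''. Your treatment of $Nuc_m(D)$ and $Nuc_l(D)$ is sound, and in fact a bit more robust than the paper's: the two-sided-ideal trick $Bk_2B=B$ plus the $\sigma$-multiplicativity bootstrap closes the $k_2=0$ step for an arbitrary central simple $B$, whereas the paper's phrase ``as $\sigma$ is a non-trivial automorphism... this holds iff $l=0$'' silently uses that $B$ has no zero divisors.

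The gap is in $Nuc_r(D)$, and it is a real one. After forcing $k_1,k_2\in Fix(\sigma)$, the remaining associator identity is $(c\sigma(u)-uc)\sigma(y)k_2=0$ for all $u,y\in B$, i.e.\ $aBk_2=0$ with $a=c\sigma(u)-uc$. Applying your ideal argument to this identity, what it forces to vanish is $a$, not $k_2$: you conclude $c\sigma(u)=uc$ for every $u\in B$, which says that $\sigma$ is the inner automorphism $x\mapsto c^{-1}xc$. That is a constraint on the pair $(\sigma,c)$, not on $k_2$, so the step ``which drives $k_2$ to zero'' is a misreading of which factor the simplicity argument kills. Concretely, if $\sigma(x)=c^{-1}xc$ with $c\notin F$, the identity $(c\sigma(u)-uc)\sigma(y)k_2=0$ holds identically, and a direct computation of the full associator shows $Nuc_r(D)=Fix(\sigma)\oplus Fix(\sigma)$, strictly larger than $Fix(\sigma)\oplus 0$. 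So the equality $Nuc_r(D)=Fix(\sigma)$ needs the extra hypothesis $\sigma\neq\mathrm{Inn}(c^{-1})$, which neither your write-up nor the paper records; your proof reaches exactly the point where this hypothesis is needed and should flag it rather than asserting $k_2=0$. (Note that $Nuc(D)$ and $Z(D)$ are unaffected, since $Nuc_l\cap Nuc_m$ already lies in $B\oplus 0$.) Also, a small inaccuracy: the second component of $[(k_1,k_2),(x,y),(x',y')]$ is $c\sigma(k_2)\sigma(y)y'-k_2c\sigma(y)\sigma(y')$, which is not literally ``symmetric'' to the first; the specialisation $y=y'=1$ still yields $c\sigma(k_2)=k_2c$ as you need, but the phrasing overstates the symmetry.
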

\begin{proof}
We will show the proof for the left nucleus. The calculations for the middle and right nucleus are obtained similarly. \\
Suppose $(k,l)$ lies in the left nucleus for some $k,l\in B$. Then for all $x\in B$, we must have $$((k,l)(0,1))(x,0)=(k,l)((0,1)(x,0)).$$ Computing both sides of this it follows that $$(c\sigma(l)x,kx)=(c\sigma(lx),kx).$$ As $\sigma$ is a non-trivial automorphism of $B$, this is true for all $x\in B$ if and only if $l=0$.
Thus we only need to consider elements of the form $(k,0)$ for $k\in B$. Now $(k,0)\in Nuc_l(D)$ if and only if we obtain $$((k,0)(u,v))(x,y)=(k,0)((u,v)(x,y))$$ for all $u,v,x,y\in B$. Computing both sides of this, this yields $$(kux+c\sigma(kvy),kuy+kvx)=(kux+kc\sigma(vy),kuy+kvx).$$ This is satisfied for all $u,v,x,y\in B$ if and only if $c\sigma(k)=kc$. Hence we have that $$Nuc_l(D)=\lbrace (k,0)\mid k\in B \mbox{ such that }c\sigma(k)=kc\rbrace.$$
As the centre is the intersection of the nucleus and the commutator, this yields $Z(D)=(Fix(\sigma)\cap\lbrace k\in B\mid c\sigma(k)=kc\rbrace\cap F)\oplus 0=F\oplus 0$.
\end{proof}

Similarly, we can calculate the left, middle and right nuclei and centre of $D_r(B,\sigma,c)$ and $D_m(B,\sigma,c)$:

\begin{theorem}\label{CSANucleusR}
Let $D=D_r(B,\sigma,c)$. Then \begin{itemize}
\item $Nuc_l(D)=Fix(\sigma)$,
\item $Nuc_m(D)=B$,
\item $Nuc_r(D)=\lbrace k\in B \mid c\sigma(k)=kc\rbrace\subset B.$
\end{itemize}
In particular, $Nuc(D)=Fix(\sigma)\cap\lbrace k\in B\mid c\sigma(k)=kc\rbrace$ and $Z(D)=F$.
\end{theorem}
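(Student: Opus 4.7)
This theorem is the mirror of Theorem \ref{CSANucleus}: the multiplication of $D_r(B,\sigma,c)$ is obtained from that of $D(B,\sigma,c)$ by moving $c$ from the left to the right of $\sigma(vy)$, which exchanges the roles of the left and right nuclei in the statement. The proof strategy is therefore the same as for Theorem \ref{CSANucleus}, but with the corresponding associativity identities swapped. My plan is to compute each of the three nuclei in turn.

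For the right nucleus, suppose $(k,l) \in Nuc_r(D_r)$. I would expand the right-nuclear associativity on a small test triple, for instance by comparing $((0,y)(0,1))(k,l)$ with $(0,y)((0,1)(k,l))$ for all $y \in B$. Direct expansion using $(u,v)(x,y) = (ux + \sigma(vy)c, uy + vx)$ produces, after cancellation, an equation of the shape $(\sigma(y)-y)\cdot m = 0$ for all $y$, where $m$ depends linearly on $l$; the simplicity of $B$ together with $\sigma \neq id$ then forces $m = 0$ and hence $l = 0$. With $l = 0$, the general right-nuclear identity $((\alpha,\beta)(x,y))(k,0) = (\alpha,\beta)((x,y)(k,0))$ collapses to the single condition $c\sigma(k) = kc$, and a direct verification in reverse shows that every $(k,0)$ satisfying this lies in $Nuc_r(D_r)$.

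For the left nucleus, a symmetric test using identities of the form $((k,l)(0,1))(\cdot)$ again forces $l = 0$ and reduces the remaining condition on $k$ to $\sigma(k) = k$, giving $Nuc_l(D_r) = Fix(\sigma)$. For the middle nucleus, a direct expansion shows $((\alpha,\beta)(k,0))(x,y) = (\alpha,\beta)((k,0)(x,y))$ for every $k \in B$, so $B \oplus 0 \subseteq Nuc_m(D_r)$; a test at $(0,l)$ then forces $l = 0$, yielding $Nuc_m(D_r) = B$.

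The stated formulas for the nucleus and centre follow immediately: $Nuc(D_r) = Nuc_l \cap Nuc_m \cap Nuc_r = Fix(\sigma) \cap \lbrace k \in B \mid c\sigma(k) = kc \rbrace$, and by Lemma \ref{CommD} we have $Comm(D_r) = F \oplus F$, so $Z(D_r) = Nuc(D_r) \cap Comm(D_r) = F$ (any $k \in F$ trivially satisfies both $\sigma(k) = k$ and $c\sigma(k) = ck = kc$). The main obstacle is the step forcing $l = 0$: one has to conclude $m = 0$ from $(\sigma(y)-y)m = 0$ for all $y \in B$, which requires simplicity of $B$ together with the observation that when $\sigma \neq id$, the two-sided ideal generated by $\lbrace \sigma(y) - y \mid y \in B \rbrace$ equals $B$.
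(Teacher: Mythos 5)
Your strategy of mirroring the proof of Theorem~\ref{CSANucleus}, exchanging the roles of the left and right nuclei, is the intended one, and the paper gives no separate proof for $D_r$. But when you actually carry out the right-nucleus computation, it does not produce the condition $c\sigma(k)=kc$ that you quote. Using the $D_r$ multiplication $(u,v)(x,y)=(ux+\sigma(vy)c,uy+vx)$, we have $(x,y)(k,0)=(xk,yk)$, so
\[
(\alpha,\beta)\bigl((x,y)(k,0)\bigr)=(\alpha xk+\sigma(\beta y)\sigma(k)c,\ \alpha yk+\beta xk),
\]
whereas
\[
\bigl((\alpha,\beta)(x,y)\bigr)(k,0)=(\alpha xk+\sigma(\beta y)ck,\ \alpha yk+\beta xk),
\]
and equating the first components forces $ck=\sigma(k)c$, i.e.\ $\sigma(k)=ckc^{-1}$. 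The condition you wrote, $c\sigma(k)=kc$, is $\sigma(k)=c^{-1}kc$; these determine genuinely different subsets of $B$ unless $k$ happens to commute with $c^{2}$ (take $B=M_2(\mathbb{Q})$, $\sigma$ conjugation by a non-scalar $d$, and a generic $c$: one set is the centraliser of $d^{-1}c$, the other that of $cd$). So your sketch has transcribed the theorem's formula rather than the one the computation yields; you should record $Nuc_r(D_r)=\{k\in B: \sigma(k)c=ck\}$. Note that the downstream claims survive, since on $Fix(\sigma)$ both conditions reduce to $ck=kc$, so $Nuc(D_r)=Fix(\sigma)\cap\{k : ck=kc\}$ and $Z(D_r)=F$ are unchanged.

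A smaller point: for the left nucleus, the test $((k,l)(0,1))(x,0)$ does not produce an identity of the form $(\sigma(x)-x)m=0$; it produces $\sigma(l)\bigl(cx-\sigma(x)c\bigr)=0$. The full associator gives $\sigma(l)\,w\,(cx-\sigma(x)c)=0$ for all $w,x\in B$, and primeness of the simple ring $B$ then yields $l=0$ \emph{unless} $\sigma(x)=cxc^{-1}$ for all $x$. So your ``$l=0$'' step implicitly requires that $\sigma$ is not the inner automorphism by $c$; otherwise $(k,l)$ with $k,l\in Fix(\sigma)$ also lies in $Nuc_l(D_r)$. This caveat is already tacitly present in the right and middle nuclei of Theorem~\ref{CSANucleus}, so you are inheriting it rather than introducing it, but it should be acknowledged rather than elided as a clean ``$(\sigma(y)-y)m=0$'' cancellation.
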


\begin{theorem}\label{CSANucleusM}
Let $D=D_m(B,\sigma,c)$. Then \begin{itemize}
\item $Nuc_l(D)=Fix(\sigma)$,
\item $Nuc_m(D)=\lbrace k\in B\mid \sigma(k)c=c\sigma(k)\rbrace \subset B$,
\item $Nuc_r(D)=Fix(\sigma).$
\end{itemize}
In particular, $Nuc(D)=Fix(\sigma)\cap\lbrace k\in B\mid c\sigma(k)=\sigma(k)c\rbrace$ and $Z(D)=F$.
\end{theorem}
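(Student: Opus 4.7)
The plan is to mirror the proof strategy used for Theorem \ref{CSANucleus}. For each of the three nuclei I first use carefully chosen test elements to force either the first or second component of a candidate $(k,l)\in B\oplus B$ to vanish, and then impose the full associator condition to extract the constraint on the surviving component. Throughout I use that $\sigma$ is non-trivial, $c\in B^{\times}$, and $Z(B)=F$.

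For the left nucleus, I would compute $((k,l)(0,1))(x,0)=(\sigma(l)cx,kx)$ versus $(k,l)((0,1)(x,0))=(\sigma(l)c\sigma(x),kx)$. Equality for all $x\in B$, together with $\sigma\neq id$ and $c\in B^{\times}$, forces $l=0$. Then expanding $((k,0)(u,v))(x,y)=(k,0)((u,v)(x,y))$ and comparing first components reduces to $\sigma(k)\sigma(v)c\sigma(y)=k\sigma(v)c\sigma(y)$ for all $v,y\in B$; setting $v=y=1$ and using invertibility of $c$ yields $\sigma(k)=k$, so $Nuc_l(D)=Fix(\sigma)$. The middle nucleus is handled analogously: the restriction via $((0,1)(k,l))(x,0)$ versus $(0,1)((k,l)(x,0))$ gives $c\sigma(l)x=c\sigma(l)\sigma(x)$, forcing $l=0$; expanding $((u,v)(k,0))(x,y)=(u,v)((k,0)(x,y))$ then reduces to $\sigma(v)\sigma(k)c=\sigma(v)c\sigma(k)$ for all $v$, and specialising $v=1$ gives the commutation condition $\sigma(k)c=c\sigma(k)$.

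For the right nucleus, the case $c\in F^{\times}$ is immediate since then $D_m(B,\sigma,c)=D(B,\sigma,c)$ and Theorem \ref{CSANucleus} applies directly. When $c\notin F$, testing $((0,1)(x,0))(k,l)$ against $(0,1)((x,0)(k,l))$ yields $\sigma(x)c\sigma(l)=c\sigma(x)\sigma(l)$ for all $x$; if $\sigma(l)\neq 0$ this would force $c\in Z(B)=F$, a contradiction, so $l=0$. The remaining condition on $(k,0)$ reduces to $\sigma(v)c\sigma(y)k=\sigma(v)c\sigma(y)\sigma(k)$, and $v=y=1$ gives $k=\sigma(k)$. Finally, $Nuc(D)$ is the intersection of the three nuclei just computed, and $Z(D)=Nuc(D)\cap Comm(D)=F$ follows from Lemma \ref{CommD}(ii).

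I would not expect any major obstacle beyond the bookkeeping inherent to these bilinear calculations. The one subtle point worth flagging is the case split in the right-nucleus argument: when $c\in F$ the test element $(0,1)$ fails to discriminate $\sigma(l)$ from $0$, so one must fall back on the reduction to Theorem \ref{CSANucleus} rather than attempt a direct elimination.
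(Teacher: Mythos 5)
Your proof is correct and mirrors the paper's approach: the paper proves Theorem \ref{CSANucleus} explicitly and states that Theorems \ref{CSANucleusR} and \ref{CSANucleusM} follow similarly, which is precisely what you carry out, including the use of Lemma \ref{CommD}(ii) for the centre. A minor optional simplification: the case split in your right-nucleus argument can be avoided by instead comparing $((x,0)(0,1))(k,l)$ against $(x,0)((0,1)(k,l))$, which yields $(\sigma(x)-x)c\sigma(l)=0$ for all $x$ and hence forces $l=0$ directly from $\sigma\neq id$ and the invertibility of $c$, without any appeal to whether $c$ lies in $F$.
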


Note that if $c\in F^{\times}$, the three algebras we obtain are identical as noted earlier. In this case, the left and right nuclei are equal to $Fix(\sigma)$ and the middle nucleus is equal to $B$.
%
%Hence we conclude that $D(B,\sigma,c)$, $D_m(B,\sigma,c)$ and $D_r(B,\sigma,c)$ are non-associative algebras over $F$.

\begin{corollary}
Let $c\in B\setminus F^{\times}$. Then \begin{itemize}
\item $D(B,\sigma,c)\not\cong D_m(B,\sigma,c)$,
\item $D_m(B,\sigma,c)\not\cong D_r(B,\sigma,c)$.
\end{itemize}
If $c$ does not commute with all elements of $Fix(\sigma)$, then $D(B,\sigma,c)\not\cong D_r(B,\sigma,c)$.
\end{corollary}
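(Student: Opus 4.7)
The plan is to use the nucleus descriptions from Theorems~\ref{CSANucleus}, \ref{CSANucleusR} and~\ref{CSANucleusM} together with the fact that each of the left, middle and right nuclei is an invariant of algebra isomorphism.

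For the first two non-isomorphisms, a dimension count on the middle nucleus is enough. Both $D(B,\sigma,c)$ and $D_r(B,\sigma,c)$ have $Nuc_m = B$, whereas $Nuc_m(D_m(B,\sigma,c)) = \{k \in B : \sigma(k)c = c\sigma(k)\}$. The substitution $k' = \sigma(k)$ identifies this set with the centralizer $C_B(c) = \{k' \in B : k'c = ck'\}$. The hypothesis $c \in B \setminus F^{\times} = B \setminus Z(B)$ forces $C_B(c)$ to be a proper $F$-subalgebra of $B$, so $\dim_F Nuc_m(D_m(B,\sigma,c)) < \dim_F B$, and the middle-nucleus dimensions therefore rule out both $D(B,\sigma,c) \cong D_m(B,\sigma,c)$ and $D_m(B,\sigma,c) \cong D_r(B,\sigma,c)$ immediately.

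For the third statement the middle nuclei coincide, so I would suppose $f \colon D(B,\sigma,c) \to D_r(B,\sigma,c)$ is an isomorphism and pin down its shape by mimicking the argument of Theorem~\ref{CDSIsomorphisms}. Preservation of $Nuc_m$ forces $\tau := f|_B$ to lie in $Aut_F(B)$. Writing $f(0,1) = (\alpha,\beta)$ and equating $f((u,0)(0,1))$ with $f(u,0)\,f(0,1)$ for arbitrary $u \in B$ forces $\alpha$ and $\beta$ to centralize $\tau(B) = B$, so $\alpha,\beta \in Z(B) = F$. Expanding $f((0,1)^2) = f(0,1)^2$ then yields $\tau(c) = \alpha^2 + \beta^2 c$ and $2\alpha\beta = 0$; since $\mathrm{char}(F) \neq 2$ and $f$ must be bijective, this gives $\alpha = 0$, $\beta \in F^{\times}$ and $\tau(c) = \beta^2 c$. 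Feeding this form back into the full multiplicativity relation for a generic product $(u,v)(x,y)$ collapses it to the single compatibility $\tau\sigma\tau^{-1} = Ad_{c^{-1}} \circ \sigma$, and combining this with the image constraint $\tau(Fix(\sigma)) = \tau(Nuc_r(D)) = Nuc_r(D_r) = \{k \in B : c\sigma(k) = kc\}$ is the mechanism by which the hypothesis is meant to be contradicted.

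The principal obstacle is the closing step of the third case. The compatibility $\tau\sigma\tau^{-1} = Ad_{c^{-1}}\circ\sigma$ applied at $v\in Fix(\sigma)$ only yields $\sigma(\tau(v)) = c\tau(v)c^{-1}$, which, when intersected with the $Nuc_r(D_r)$-condition $c\sigma(\tau(v)) = \tau(v)c$, gives no better than $[\tau(v),c^{2}] = 0$. Upgrading this to $[v,c] = 0$ requires careful exploitation of the symmetric image constraint on $Nuc_l$ and of the equality $\tau(c) \in Fc$, and is the technical heart of the argument.
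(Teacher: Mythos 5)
Your first two non-isomorphisms are handled correctly and by the same route the paper takes: since the hypothesis $c\notin F=Z(B)$ makes $Nuc_m(D_m(B,\sigma,c))=\{k\in B:\sigma(k)c=c\sigma(k)\}$ a proper $F$-subalgebra of $B$, while $Nuc_m=B$ for both $D(B,\sigma,c)$ and $D_r(B,\sigma,c)$, a dimension count on middle nuclei rules out both isomorphisms at once. This is exactly the paper's argument, just phrased via dimensions rather than the impossibility of $\{k:\sigma(k)c=c\sigma(k)\}=B$.

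The obstacle you flag in the third case is genuine, and in fact it cannot be closed: the third assertion of the corollary is false as stated, and the paper's own proof is flawed at precisely the point you probe. The paper passes from $D(B,\sigma,c)\cong D_r(B,\sigma,c)$ to $Fix(\sigma)=\{k\in B: kc=c\sigma(k)\}$, but an isomorphism $G$ only forces $\tau(Nuc_l(D))=Nuc_l(D_r)$ for the induced $\tau\in Aut_F(B)$, and $\tau$ need not be the identity. Your own analysis shows the full constraint set is $\tau(c)=\beta^2 c$ with $\beta\in F^\times$ together with $\sigma\tau(w)=c\,\tau\sigma(w)\,c^{-1}$ for all $w\in B$, and these equations admit solutions even when $c$ fails to commute with $Fix(\sigma)$. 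Concretely, take $F=\mathbb{Q}$, $B=(3,-1)_{\mathbb{Q}}$ (a quaternion division algebra with $i^2=3$, $j^2=-1$), $\sigma(x)=ixi^{-1}$, and $c=j$; then $Fix(\sigma)=\mathbb{Q}(i)$ and $ji=-ij\neq ij$, yet $\tau(x)=(1-j)x(1-j)^{-1}$ satisfies $\tau(j)=j$ (so $\beta=1$) and $i(1-j)=j(1-j)i$, hence $\sigma\tau(w)=j\,\tau\sigma(w)\,j^{-1}$ for all $w$, and $G(x,y)=(\tau(x),\tau(y))$ is an isomorphism $D(B,\sigma,j)\to D_r(B,\sigma,j)$. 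The $[\tau(v),c^2]=0$ relation you derive really is all the information available; rather than trying to upgrade it, you should conclude that the corollary's third statement needs a stronger hypothesis to exclude such $\tau$.
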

\begin{proof}
Since automorphisms preserve each of the left, middle and right nuclei, if $D(B,\sigma,c)\cong D_m(B,\sigma,c)$ this implies that $\lbrace k\in B\mid \sigma(k)c=c\sigma(k)\rbrace=B.$ As $c\not\in F$, we can find $k\in B$ such that $\sigma(k)$ does not commute with $c$ so this is never true. An identical argument shows that $D_m(B,\sigma,c)\not\cong D_r(B,\sigma,c)$.\\
Finally, we see that $D(B,\sigma,c)\cong D_r(B,\sigma,c)$ occurs only if $Fix(\sigma)=\lbrace k\in B\mid kc=c\sigma(k)\rbrace.$ Let $x\in Fix(\sigma)$. We have $x\in \lbrace k\in B\mid kc=c\sigma(k)\rbrace$ if and only if $cx=xc.$\\ Similarly, if we take an element $y\in \lbrace k\in B\mid kc=c\sigma(k)\rbrace$, it lies in $Fix(\sigma)$ if and only if $cy=yc$. Thus the left nuclei of the two algebras are equal only when $c$ commutes with all of $Fix(\sigma)$. Otherwise, we must have $D(B,\sigma,c)\not\cong D_r(B,\sigma,c)$.
\end{proof}

Similarly to the algebras we obtained from doubling a field extension, any $F$-subalgebra of $B$ appears as a subalgebra of $D(B,\sigma,c)$, $D_m(B,\sigma,c)$ and $D_r(B,\sigma,c)$. Additionally, if $E\subset B$ is such that $c\in E^{\times}$ and $\sigma\!\mid_E\in Aut_F(E)$, then $D(E,\sigma\!\mid_E,c)$ (resp. $D_m(E,\sigma\!\mid_E,c)$ and $D_r(E,\sigma\!\mid_E,c)$) is a subalgebra of $D(B,\sigma,c)$ (resp. $D_m(B,\sigma,c)$ and $D_r(B,\sigma,c)$). In particular, this yields the following:

\begin{theorem}
If $c\in K^{\times}$ for some separable field extension $K/F$ contained in $B$ such that $\sigma\!\mid_K=\phi\in Aut_F(K)$, then $D(K,\phi,c)$ is a commutative Dickson subalgebra of $D(B,\sigma,c)$, $D_m(B,\sigma,c)$ and $D_r(B,\sigma,c)$.
\end{theorem}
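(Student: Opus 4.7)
The plan is to combine the subalgebra remark stated immediately before the theorem with the observation that, on a commutative subspace, the three generalised multiplications collapse to the single Dickson multiplication.

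First, I would apply the general subalgebra statement preceding the theorem: since $K \subset B$ is an $F$-subalgebra, $c \in K^\times$, and $\sigma\!\mid_K = \phi \in Aut_F(K)$, the $F$-subspace $K \oplus K$ of $B \oplus B$ is closed under each of the three multiplications defining $D(B,\sigma,c)$, $D_m(B,\sigma,c)$ and $D_r(B,\sigma,c)$. Thus $K \oplus K$ inherits a (possibly different) subalgebra structure in each of the three ambient algebras.

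Next, I would check that these three inherited multiplications in fact coincide. Take $u,v,x,y \in K$ and $c \in K$. Because $K$ is a field, any two elements of $K$ commute, and $\phi$ is a ring homomorphism of $K$, so $\phi(vy) = \phi(v)\phi(y)$. Consequently
\[
c\,\phi(vy) \;=\; \phi(v)\,c\,\phi(y) \;=\; \phi(vy)\,c,
\]
which shows that the first coordinates of $(u,v)(x,y)$ agree in all three constructions; the second coordinates $uy + vx$ already agree by definition. Hence, restricted to $K \oplus K$, each of the three multiplications becomes
\[
(u,v)(x,y) \;=\; \bigl(ux + c\,\phi(vy),\, uy + vx\bigr),
\]
which is precisely the commutative Dickson multiplication defining $D(K,\phi,c)$.

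There is essentially no obstacle here: the work is simply recognising that commutativity of $K$ together with multiplicativity of $\phi$ forces the three variants (with $c$ on the left, in the middle, or on the right) to agree. The separability hypothesis on $K/F$ plays no role in the argument itself; it is used only to guarantee that such a subfield $K$ embeds in $B$ in the way one expects and to ensure that $\phi$ behaves as a genuine field automorphism on $K$, which is needed when $D(K,\phi,c)$ is to be studied as a commutative Dickson algebra in the sense of Section~\ref{GeneralisingDicksonAlgebras}.
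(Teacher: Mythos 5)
Your proof is correct and matches the paper's intent exactly: the paper gives no separate proof, simply deriving the theorem as an immediate consequence of the preceding remark that $E \oplus E$ is a subalgebra whenever $c \in E^\times$ and $\sigma\!\mid_E \in Aut_F(E)$, and your explicit check that commutativity of $K$ together with multiplicativity of $\phi$ collapses the three products to the single Dickson multiplication is precisely what the paper leaves implicit behind ``In particular, this yields the following.''
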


\begin{theorem}\label{CDSDivisionB}
\begin{enumerate}[(i)]
\item $D=D(B,\sigma,c)$ is a division algebra if and only if $c\neq rt^{-1}rs\sigma(s^{-1}t^{-1})$ for all $r,s,t\in B^{\times}$.
\item $D_m(B,\sigma,c)$ is a division algebra if and only if $c\neq \sigma(t)^{-1}rt^{-1}rs\sigma(s)^{-1}$ for all $r,s,t\in B^{\times}$. 
\item $D_r(B,\sigma,c)$ is a division algebra if and only if $c\neq \sigma(s^{-1}t^{-1})rt^{-1}rs$ for all $r,s,t\in B^{\times}$.
\end{enumerate}
\end{theorem}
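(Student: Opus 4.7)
The plan is to follow the same template as the proof of Theorem \ref{CDSDivision}, extended to the noncommutative setting. For each of the three multiplications I would argue directly from the zero-divisor equations: since $D$ is finite-dimensional, it is a division algebra if and only if it has no zero divisors. In each case the second component of $(u,v)(x,y)$ is $uy+vx$, so the equation $uy+vx=0$ is common to all three parts, while the three conditions on $c$ arise from the three different first-component equations.

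First I would dispose of the case $v=0$: then $uy+vx=0$ forces $uy=0$, and since $B$ must itself be a division algebra (otherwise $B\oplus 0\subset D$ already contains zero divisors and the statements hold trivially), $u\in B^{\times}$ forces $y=0$, whereupon the first-component equation collapses to $ux=0$ and yields $x=0$, contradicting $(x,y)\neq(0,0)$. So I may assume $v\in B^{\times}$ and write $x=-v^{-1}uy$. Substituting into the three first-component equations and solving for $c$ produces, respectively,
\begin{align*}
(\mathrm{i})\quad & c=uv^{-1}uy\,\sigma(vy)^{-1}=uv^{-1}uy\,\sigma(y)^{-1}\sigma(v)^{-1},\\
(\mathrm{ii})\quad & c=\sigma(v)^{-1}uv^{-1}uy\,\sigma(y)^{-1},\\
(\mathrm{iii})\quad & c=\sigma(vy)^{-1}uv^{-1}uy=\sigma(y)^{-1}\sigma(v)^{-1}uv^{-1}uy.
\end{align*}
Setting $r:=u$, $s:=y$, $t:=v$ and using that $\sigma$ is an automorphism (so $\sigma(a)^{-1}=\sigma(a^{-1})$ and $\sigma(a)\sigma(b)=\sigma(ab)$) yields exactly the three forms in the theorem statement. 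In particular the intermediate $y\neq 0$ step uses that if $y=0$ then $x=0$, again a contradiction.

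For the converse in each part, I would exhibit explicit zero divisors. Given $c=rt^{-1}rs\,\sigma(s^{-1}t^{-1})$ (respectively the forms in (ii) and (iii)) with $r,s,t\in B^{\times}$, the pair $(u,v)=(r,t)$ and $(x,y)=(-t^{-1}rs,s)$ are both nonzero; the second component of their product is $rs+t(-t^{-1}rs)=0$ automatically, and the first component reduces to $-rt^{-1}rs+rt^{-1}rs=0$ after the factor of $c$ is substituted in and the $\sigma$-terms cancel. This is a short bookkeeping verification in each of the three cases, using only that $\sigma$ is multiplicative; the computations are the direct analogues of the one performed in the proof of Theorem \ref{CDSDivision}.

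The main obstacle is keeping the noncommutative arithmetic straight: unlike the commutative case, every rearrangement must preserve the left-to-right order of the factors, and the choice of which side $c$, $\sigma(v)$ and $\sigma(y)$ sit on is exactly what distinguishes the three conditions. In particular the form of $c$ picked up in each part directly reflects the placement of $c$ and $\sigma$ in the multiplication table, which is why (i), (ii) and (iii) produce genuinely different polynomial-type obstructions to division. Once the case analysis on $v$ is handled and the substitution $x=-v^{-1}uy$ is made, the rest is mechanical.
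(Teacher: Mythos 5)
Your proof follows the paper's argument essentially verbatim: case-split on $v=0$ versus $v$ invertible, substitute $x=-v^{-1}uy$ into the first-component equation, solve for $c$, and for the converse exhibit the zero divisors $(r,t)$ and $(-t^{-1}rs,s)$. The bookkeeping you sketch for parts (ii) and (iii) reproduces the paper's computations, and your observation that only the first component differs across the three multiplications is exactly how the paper organizes the argument (proving (i) in detail and noting (ii)--(iii) follow identically).

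One small but substantive caveat: your parenthetical claim that "if $B$ is not a division algebra then the statements hold trivially" is not correct. If $B$ has zero divisors then $D$ is certainly not a division algebra, but that makes the biconditional potentially \emph{false}, not trivially true, since nothing forces $c$ to be expressible as $rt^{-1}rs\,\sigma(s^{-1}t^{-1})$ with $r,s,t\in B^{\times}$ in that case. You are right, though, that the argument needs $v^{-1}$ to exist whenever $v\neq 0$; the paper deals with this by tacitly assuming $B$ is a division algebra (the proof states "As $B$ is an associative division algebra, we have $v^{-1}\in B$", and this standing assumption is made explicit just before Theorem \ref{CDSBIsomorphisms}). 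The correct framing is that the theorem carries the hypothesis that $B$ is division, not that the non-division case is vacuous.
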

\begin{proof}
(i): Suppose that $D$ is not a division algebra. Then there exist nonzero elements $(u,v),(x,y)\in K\oplus K$ such that $(u,v)(x,y)=(0,0).$ This is equivalent to the simultaneous equations \begin{align}
ux+c\sigma(vy)=&0, \label{CDSDivisionB1}\\
uy+vx=&0. \label{CDSDivisionB2}
\end{align}
If $v=0$, then (\ref{CDSDivisionB2}) becomes $uy=0$, so either $u=0$ or $y=0$. However, $u$ must be nonzero, else $(u,v)=(0,0)$ which is a contradiction, so we must have $y=0$. Additionally, (\ref{CDSDivisionB1}) gives $ux=0$. As $u$ is nonzero, this implies $x=0$ and so $(x,y)=(0,0)$ which is again a contradiction.\\
So let $v\neq 0$. As $B$ is an associative division algebra, we have $v^{-1}\in B$ and hence we obtain $$x=-v^{-1}uy$$ from (\ref{CDSDivisionB2}). Now if $y=0$, this implies that $x=0$ which is a contradiction to $(x,y)\neq(0,0)$. Substituting this into (\ref{CDSDivisionB1}), we get $$-uv^{-1}uy+c\sigma(vy)=0,$$ which rearranges to give $c=uv^{-1}uy\sigma(y)^{-1}\sigma(v)^{-1}$.\\
Conversely, suppose $c=rt^{-1}rs\sigma(s)^{-1}\sigma(t)^{-1}$ for some $r,s,t\in K^{\times}$. Consider the elements $(r,t)$ and $(-t^{-1}rs,s)$. Both elements are nonzero but satisfy \begin{align*}
(r,t)(-t^{-1}rs,s)=&(-rt^{-1}rs+rt^{-1}rs\sigma(s)^{-1}\sigma(t)^{-1}\sigma(ts),rs-tt^{-1}rs)\\
=&(0,0).
\end{align*}
Hence $D$ is not a division algebra.\\
The proofs of (ii) and (iii) follow almost identically to (i).
\end{proof}

\begin{corollary} If $c\in B^{\times 2}$, then $D(B,\sigma,c), D_m(B,\sigma,c)$, and $D_r(B,\sigma,c)$ are not division algebras.
\end{corollary}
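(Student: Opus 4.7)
The plan is to deduce this directly from Theorem \ref{CDSDivisionB} by specialising the parameters $r,s,t\in B^{\times}$ appearing in each of the three characterisations to trivial values.

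Write $c=a^{2}$ for some $a\in B^{\times}$. For part (i), I would set $s=t=1$ in the condition of Theorem \ref{CDSDivisionB}(i); then the right-hand side $rt^{-1}rs\sigma(s)^{-1}\sigma(t)^{-1}$ collapses to $r^{2}$, since $\sigma(1)=1$. Choosing $r=a$ gives $c=r^{2}$, so the failure condition for being a division algebra is met, and $D(B,\sigma,c)$ has zero divisors. Exactly the same specialisation $s=t=1$, $r=a$ makes the right-hand sides in parts (ii) and (iii) equal to $r^{2}=a^{2}=c$ as well, since all the $\sigma$-terms trivialise. Hence none of $D(B,\sigma,c)$, $D_{m}(B,\sigma,c)$, $D_{r}(B,\sigma,c)$ is a division algebra.

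There is essentially no obstacle here: the work has already been done in Theorem \ref{CDSDivisionB}, and the corollary is the natural generalisation of Corollary \ref{CDSDivisionSquare} from the commutative setting. One could alternatively exhibit the zero divisors explicitly, mirroring the converse direction of the proof of Theorem \ref{CDSDivisionB}: in case (i) the pair $(a,1)$ and $(-a,1)$ multiplies to $(0,0)$, and analogous pairs work in cases (ii) and (iii). I would present the short argument via Theorem \ref{CDSDivisionB} since it keeps the three cases uniform.
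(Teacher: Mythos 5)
Your proof is correct and matches the paper's approach exactly: the paper's proof is the single line "This follows from setting $s=t=1$ in Theorem~\ref{CDSDivisionB}," which is precisely the specialisation you carry out (with $r=a$). Your explicit zero divisors $(a,1)$ and $(-a,1)$ are also exactly what the converse direction of Theorem~\ref{CDSDivisionB} produces under this specialisation.
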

\begin{proof}
This follows from setting $s=t=1$ in Theorem \ref{CDSDivisionB}.
\end{proof}

\begin{corollary} Let $N_{B/F}:B\to F$ be the nondegenerate multiplicative norm form on $B$. The algebras $D=D(B,\sigma,c)$, $D_m(B,\sigma,c)$, $D_r(B,\sigma,c)$ are division algebras if $$N_{B/F}(c)\neq N_{B/F}(a)^2$$ for all $a\in B$.
\end{corollary}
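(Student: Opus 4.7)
The plan is to run the same contrapositive argument that proves Corollary \ref{CDSDivisionNorm}, but uniformly across the three constructions. Suppose one of $D=D(B,\sigma,c)$, $D_m(B,\sigma,c)$, $D_r(B,\sigma,c)$ is not a division algebra. Then Theorem \ref{CDSDivisionB} produces elements $r,s,t\in B^{\times}$ together with an explicit factorisation of $c$: namely $c=rt^{-1}rs\sigma(s)^{-1}\sigma(t)^{-1}$, $c=\sigma(t)^{-1}rt^{-1}rs\sigma(s)^{-1}$, or $c=\sigma(s)^{-1}\sigma(t)^{-1}rt^{-1}rs$ respectively. I would apply $N_{B/F}$ to both sides of whichever identity is relevant and show that the right-hand side is forced to be $N_{B/F}(rt^{-1})^2$, contradicting the hypothesis.

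The main ingredient is the identity $N_{B/F}(\sigma(x))=N_{B/F}(x)$ for every $x\in B$. This follows from the uniqueness of the reduced norm recalled in Section 1: because $\sigma$ is an $F$-algebra automorphism of $B$, the composition $N_{B/F}\circ\sigma$ is again a nondegenerate multiplicative form of degree $\deg B$ on $B$, and by uniqueness must coincide with $N_{B/F}$. Combined with multiplicativity of $N_{B/F}$ and the fact that its values lie in the commutative field $F$, we may both distribute the norm over each product and reorder the resulting factors freely.

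Once these tools are in place, every case collapses to the same computation: the factors $N_{B/F}(\sigma(s))^{-1}$ and $N_{B/F}(\sigma(t))^{-1}$ become $N_{B/F}(s)^{-1}$ and $N_{B/F}(t)^{-1}$; the $s$-contributions cancel in the commutative field $F$; and what remains is $N_{B/F}(c)=N_{B/F}(r)^2N_{B/F}(t)^{-2}=N_{B/F}(rt^{-1})^2$, contradicting $N_{B/F}(c)\neq N_{B/F}(a)^2$ for all $a\in B$.

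The only genuine obstacle is justifying $N_{B/F}\circ\sigma=N_{B/F}$; after that the three cases are essentially identical string manipulations and can be written as a single unified calculation with a brief remark that the positions of the $\sigma(s)^{\pm 1}$, $\sigma(t)^{\pm 1}$ factors on the left or right of $c$ are immaterial once one has passed into $F$.
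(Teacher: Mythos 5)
Your argument is correct and is exactly the "analogous" argument the paper intends, namely the contrapositive of Corollary~\ref{CDSDivisionNorm}: take norms of whichever factorisation Theorem~\ref{CDSDivisionB} produces, use multiplicativity and $N_{B/F}\circ\sigma=N_{B/F}$, and arrive at $N_{B/F}(c)=N_{B/F}(rt^{-1})^2$. You also supply the one step the paper leaves implicit, that $N_{B/F}\circ\sigma=N_{B/F}$ follows from uniqueness of the nondegenerate multiplicative degree-$d$ form on $B$, which is a welcome (and correct) clarification rather than a departure from the paper's route.
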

\begin{proof}
This follows analogously to Corollary \ref{CDSDivisionNorm}.
\end{proof}

\begin{example}
\begin{enumerate}[(i)]
\item Let $F=\mathbb{Q}$ and $B=(a,b)$ be a quaternion division algebra over $\mathbb{Q}$ with $a,b>0$. For all $x\in B^{\times}$, we see that $N_{B/\mathbb{Q}}(x)^2> 0$; as a consequence, $D(B,\sigma,c)$ is a division algebra for any $c\in B^{\times}$ such that $N_{B/\mathbb{Q}}(c)<0$. For example, if we pick $c=c_1i+c_2j$ for some $c_i\in \mathbb{Q}$ not both zero, then $$N_{B/\mathbb{Q}}(c)=-c_1^2a-c_2^2b<0,$$ so $D(B,\sigma,c)$ is a division algebra.
\item Let $F=\mathbb{Q}_p$ and $B=(u,p)$ be the unique quaternion division algebra over $\mathbb{Q}_p$ for some $u\in \mathbb{Z}_p\setminus (\mathbb{Z}_p)^2$. Then for all $c\in B$, it follows that $N_{B/\mathbb{Q}_p}(c)=x^2-y^2u-z^2p+w^2up$ for some $x,y,z,w\in \mathbb{Q}_p$. As $up$ is not a square in $\mathbb{Q}_p$, for any $c\in B$ such that $N_{B/\mathbb{Q}_p}(c)=w^2up$ we conclude that $D(B,\sigma,c)$ is a division algebra over $\mathbb{Q}_p$.
\end{enumerate}
\end{example}

\subsection{Isomorphisms}

The results and proofs from Section 2 regarding isomorphisms and automorphisms of commutative Dickson algebras generalise almost identically to $D(B,\sigma,c)$ and $D_r(B,\sigma,c)$, as the middle nuclei of these algebras are equal to $B$. First note the following result:

\begin{lemma}\label{otherdicksonnucleus} Let $D=D(B,\sigma,c)$, $D'=D(B,\phi,d)$ be two Dickson algebras over $F$. If there exists an $F$-isomorphism $\tau:B\to B'$ such that $\tau\circ\sigma=\phi\circ\tau$ and $\tau(c)=db^2$ for some $b\in F^{\times}$, then $\tau\!\mid_{Nuc_l(D)}:Nuc_l(D)\to Nuc_l(D')$ is an $F$-isomorphism.
\end{lemma}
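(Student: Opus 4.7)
The plan is to unfold the definition of $Nuc_l$ given by Theorem \ref{CSANucleus} and transport the defining relation across $\tau$. By Theorem \ref{CSANucleus}, writing $D=D(B,\sigma,c)$ and $D'=D(B,\phi,d)$, we have
\[
Nuc_l(D)=\{k\in B\mid c\sigma(k)=kc\},\qquad Nuc_l(D')=\{k\in B\mid d\phi(k)=kd\}.
\]
So the task reduces to showing that the $F$-isomorphism $\tau:B\to B$ maps the first set bijectively onto the second.

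First I would check the inclusion $\tau(Nuc_l(D))\subseteq Nuc_l(D')$. Take $k\in Nuc_l(D)$ and apply $\tau$ to $c\sigma(k)=kc$ to get $\tau(c)\tau(\sigma(k))=\tau(k)\tau(c)$. Using the intertwining relation $\tau\circ\sigma=\phi\circ\tau$ this becomes $\tau(c)\phi(\tau(k))=\tau(k)\tau(c)$. Substituting $\tau(c)=db^2$ with $b\in F^\times$ central in $B$, the factor $b^2$ commutes with everything and cancels, leaving $d\phi(\tau(k))=\tau(k)d$, i.e.\ $\tau(k)\in Nuc_l(D')$.

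Next I would establish the reverse inclusion in the same way. Given $l\in Nuc_l(D')$, choose the unique preimage $k=\tau^{-1}(l)\in B$. Running the above chain of equalities backwards (multiply through by $b^{-2}\in F^\times$, replace $db^2$ by $\tau(c)$, use $\phi\circ\tau=\tau\circ\sigma$, then apply $\tau^{-1}$) yields $c\sigma(k)=kc$, so $k\in Nuc_l(D)$ and $\tau(k)=l$. Since $\tau$ is already an $F$-linear bijection $B\to B$, its restriction to the subspace $Nuc_l(D)$ is automatically $F$-linear and injective, and the two inclusions above make it a bijection onto $Nuc_l(D')$. Finally, $\tau\!\mid_{Nuc_l(D)}$ respects multiplication because $\tau$ does and because by Theorem \ref{CSANucleus} the nuclei are associative subalgebras whose multiplication is inherited from $B$.

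The only subtle point — and it is not really an obstacle, just the one place the hypothesis is used — is that $b^2$ must be central in $B$ in order to be cancellable from both sides of the equation simultaneously; this is exactly why one needs $b\in F^\times$ rather than $b\in B^\times$. Everything else is a direct transport of structure along $\tau$.
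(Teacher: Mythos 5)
Your proof is correct and follows essentially the same route as the paper's: both unfold the description of $Nuc_l$ from Theorem \ref{CSANucleus}, apply $\tau$ to the defining relation $c\sigma(k)=kc$, substitute $\tau(c)=db^2$ and use centrality of $b^2\in F^\times$ to cancel, then establish the reverse inclusion by the analogous argument run through $\tau^{-1}$. The paper abbreviates the reverse inclusion by referring to Lemma \ref{sigmataufixfields}; you spell it out, which is harmless.
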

\begin{proof}
As with the proof of Lemma \ref{sigmataufixfields}, we only need to show that $im(\tau\!\mid_{Nuc_l(D)})=Nuc_l(D')$. First, consider $x\in Nuc_l(D)$. It follows that $x$ must satisfy $c\sigma(k)=kc$. Applying $\tau$ to both sides of the equation and substituting in the condition on $\tau(c)$, we obtain $$db^2\tau(\sigma(k))=\tau(k)db^2.$$ As $b\in F^{\times}$, we can cancel this from both sides. After substituting $\tau\circ\sigma=\phi\circ\tau$, this yields $d\phi(\tau(k))=\tau(k)d$ and thus $\tau(k)\in Nuc_l(D')$. Hence $im(\tau\!\mid_{Nuc_l(D)})\subseteq Nuc_l(D')$. In order to show equality, we follow an analogous process to the one in the proof of Lemma \ref{sigmataufixfields}.
\end{proof}

It is clear that this also holds when considering the right nucleus of $D_r(B,\sigma,c)$, as this is equal to the left nucleus of $D(B,\sigma,c)$. We will always assume that $B, B'$ are central simple division algebras over $F$. We now give a proof of the generalisation of Theorem \ref{CDSIsomorphisms}:

\begin{theorem}\label{CDSBIsomorphisms}
Let $D=D(B,\sigma,c)$ and $D'=D(B',\phi,d)$ be $F$-algebras. Then $G:D\to D'$ is an isomorphism if and only if $G$ has the form $$G(x,y)=(\tau(x),\tau(y)b)$$ for some $F$-isomorphism $\tau:B\to B'$ such that $\phi\circ\tau=\tau\circ\sigma$ and $\tau(c)=db^2$ for some $b\in F^{\times}$.
\end{theorem}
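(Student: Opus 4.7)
The plan is to mirror the proof of Theorem~\ref{CDSIsomorphisms}, with the key new ingredient being an exploitation of the fact that $B$ and $B'$ are central simple over $F$. First I would observe that since any $F$-algebra isomorphism sends middle nucleus to middle nucleus, and since $Nuc_m(D) = B$ and $Nuc_m(D') = B'$ by Theorem~\ref{CSANucleus}, the restriction $G|_B$ must be an $F$-algebra isomorphism $\tau : B \to B'$, so $G(x,0) = (\tau(x), 0)$ for all $x \in B$.

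Next, setting $G(0,1) = (a,b)$ with $a,b \in B'$, I would compute $G(0,x)$ in two ways, using the identity $(0,1)(x,0) = (0,x) = (x,0)(0,1)$ in $D$. Multiplicativity of $G$ gives $(a,b)(\tau(x),0) = (\tau(x),0)(a,b)$ in $D'$, and expanding both sides in the $D'$-multiplication yields $a\tau(x) = \tau(x)a$ and $b\tau(x) = \tau(x)b$ for every $x \in B$. Since $\tau$ is surjective and $Z(B') = F$ by central simplicity, this forces $a, b \in F$.

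Then, applying multiplicativity to $(0,1)^2 = (c,0)$ and comparing components produces the system $a^2 + d\phi(b^2) = \tau(c)$ and $ab + ba = 0$. Because $a,b \in F$ are fixed by $\phi$ and commute, the second equation reduces to $2ab = 0$; since $b = 0$ would make $G(x,y) = (\tau(x) + a\tau(y), 0)$ non-surjective, we must have $a = 0$ and $b \in F^{\times}$, whence $\tau(c) = db^2$. Substituting the resulting form $G(x,y) = (\tau(x), \tau(y)b)$ into the multiplicativity condition on a general product $(u,v)(x,y)$, the first-component equation reduces to $\tau(c\sigma(vy)) = \tau(c)\phi(\tau(vy))$; cancelling the invertible $\tau(c)$ on the left yields $\tau\circ\sigma = \phi\circ\tau$. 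The converse is a routine check that such a map is $F$-linear, bijective and multiplicative, parallel to the end of the proof of Theorem~\ref{CDSIsomorphisms}.

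The main obstacle, compared with the commutative setting, is that in a noncommutative algebra the equation $ab + ba = 0$ does not immediately force $a = 0$ or $b = 0$. The new step, namely the two-sided multiplicativity comparison $G((0,1)(x,0)) = G((x,0)(0,1))$ forcing $a, b \in Z(B') = F$, is precisely what restores the situation of Theorem~\ref{CDSIsomorphisms}, after which the rest of the argument runs in parallel to the field case; this is also consistent with Lemma~\ref{otherdicksonnucleus}, which guarantees that a $\tau$ satisfying the stated conditions automatically sends the left nucleus of $D$ isomorphically onto that of $D'$.
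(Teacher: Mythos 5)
Your proposal is correct and follows essentially the same route as the paper: reduce to the middle nucleus to get $\tau$, use the two-sided decomposition $(0,1)(x,0)=(x,0)(0,1)$ to force $G(0,1)=(a,b)$ with $a,b\in Z(B')=F$, then exploit characteristic $\neq 2$ and surjectivity to get $a=0$, and finally read off $\phi\circ\tau=\tau\circ\sigma$ from multiplicativity on general products. The paper phrases the central-simplicity step by expanding $G(x,y)$ in two ways rather than comparing $G((0,1)(x,0))$ with $G((x,0)(0,1))$ directly, but this is the identical observation.
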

\begin{proof}
Suppose $G:D\to D'$ is an $F$-isomorphism. Then $G$ maps the middle nucleus of $D$ to the middle nucleus of $D'$, so by Theorem \ref{CSANucleus} this implies $B\cong B'.$ This means $G$ restricted to $B$ must be an isomorphism which maps to $B'$; that is, $G\!\mid_B=\tau:B\to B'$, so this yields $G(x,0)=(\tau(x),0)$ for all $x\in B$.\\
Let $G(0,1)=(a,b)$ for some $a,b\in B'$. Then we have $G(x,y)=G(x,0)+G(0,1)G(y,0)=(\tau(x)+a\tau(y),\tau(y)b),$ and $G(x,y)=G(x,0)+G(y,0)G(0,1)=(\tau(x)+\tau(y)a,b\tau(y)).$ This implies that $a,b\in Z(B')=F$.\\
As $G$ is multiplicative, it follows that $G((0,1)^2)=G(0,1)^2$ which holds if and only if $(a,b)(a,b)=(\tau(c),0).$ From this, we obtain the equations \begin{equation*}
a^2+d\phi(b^2)=\tau(c), \ ab+ba=0.
\end{equation*}
Since we established that $a,b\in F$, this simplifies to $a^2+db^2=\tau(c)$ and $2ab=0.$ As $F$ does not have characteristic 2, this implies that either $a=0$ or $b=0$. If $b=0$, then $G(x,y)=(\tau(x)+\tau(y)a,0)$ and so $G$ is not surjective. This is a contradiction, as $G$ is an isomorphism. Thus $a=0$ and we obtain $db^2=\tau(c)$.\\
Finally, as $G$ is multiplicative it follows that $G(u,v)G(x,y)=G((u,v)(x,y))$ for all $u,v,x,y\in K$. Computing both sides of this equation, we get $$(\tau(ux)+d\phi(\tau(v)b\tau(y)b),\tau(uy)b+\tau(v)b\tau(x))=(\tau(ux+c\sigma(vy)),\tau(uy+vx)b)$$ for all $u,v,x,y\in K$. As $b\in F$, this implies $db^2\phi(\tau(vy))=\tau(c\sigma(vy)).$ After substituting the condition $\tau(c)=d\phi(b^2)$, we conclude $\phi\circ\tau=\tau\circ\sigma.$\\
Conversely, let $G:B\oplus B\to B'\oplus B'$ be defined by $G(x,y)=(\tau(x),\tau(y)b)$ for some $F$-isomorphism $\tau:B\to B'$ such that $\phi\circ\tau=\tau\circ\sigma$ and $\tau(c)=db^2$ for some $b\in F^{\times}$. By Lemma \ref{sigmataufixfields} and Lemma \ref{otherdicksonnucleus}, we see that $G$ maps the nuclei of $D$ isomorphically to the nuclei of $D'$. Thus, it is easily checked that this $G$ gives an $F$-algebra isomorphism from $D$ to $D'$.
\end{proof}

\begin{theorem}\label{CDSBIsomorphismsR}
Let $D=D_r(B,\sigma,c)$ and $D'=D_r(B',\phi,d)$ be $F$-algebras. Then $G:D\to D'$ is an isomorphism if and only if $G$ has the form $$G(x,y)=(\tau(x),\tau(y)b)$$ for some $F$-isomorphism $\tau:B\to B'$ such that $\phi\circ\tau=\tau\circ\sigma$ and $\tau(c)=db^2$ for some $b\in F^{\times}$.
\end{theorem}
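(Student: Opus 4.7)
The plan is to mirror the proof of Theorem \ref{CDSBIsomorphisms} step by step, with only small adjustments to account for the placement of $c$ on the right in the first component of the multiplication of $D_r(B,\sigma,c)$. By Theorem \ref{CSANucleusR} the middle nuclei of $D$ and $D'$ are $B$ and $B'$ respectively, and $(0,1)^2=(c,0)$ holds in $D_r$ exactly as in $D$, so the opening moves of the argument carry over unchanged.

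First I would assume $G:D\to D'$ is an $F$-algebra isomorphism. Preservation of middle nuclei forces $G\!\mid_B=\tau$ for some $F$-isomorphism $\tau:B\to B'$, hence $G(x,0)=(\tau(x),0)$. Writing $G(0,1)=(a,b)$, the two decompositions $G(x,y)=G(x,0)+G(0,1)G(y,0)=G(x,0)+G(y,0)G(0,1)$ agree because $(0,1)$ and $(y,0)$ commute in $D_r$ (both products equal $(0,y)$), which forces $a,b\in Z(B')=F$. Applying $G$ to $(0,1)^2=(c,0)$ and expanding $(a,b)(a,b)$ yields $a^2+db^2=\tau(c)$ and $2ab=0$; surjectivity of $G$ rules out $b=0$, giving $a=0$ and $\tau(c)=db^2$.

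The main computational step is then to impose multiplicativity on general pairs. The second components of $G(u,v)G(x,y)$ and $G((u,v)(x,y))$ agree automatically since $b\in F$ is central. The first components reduce to the condition $\phi(\tau(vy))b^2 d=\tau(\sigma(vy))\tau(c)$; substituting $\tau(c)=db^2$ and using $b^2\in Z(B')$ to interchange $b^2$ and $d$ collapses this to $\phi\circ\tau=\tau\circ\sigma$. This is the only place where the argument differs (mildly) from Theorem \ref{CDSBIsomorphisms}, since in $D_r$ the constant $c$ now sits on the right of $\sigma(vy)$ rather than the left; the centrality of $b^2$ is what makes the two placements yield the same final relation, and is the one point I would watch carefully.

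For the converse, I would verify directly that any map of the stated form is $F$-linear, bijective and multiplicative, using $\tau(c)=db^2$ and $\phi\circ\tau=\tau\circ\sigma$ as just derived. The right-nucleus analogue of Lemma \ref{otherdicksonnucleus}, proved by the identical argument (with the roles of $Nuc_l$ and $Nuc_r$ exchanged and using Theorem \ref{CSANucleusR} in place of Theorem \ref{CSANucleus}), guarantees that $\tau$ maps $Nuc_r(D)$ to $Nuc_r(D')$. I do not anticipate any genuine obstacle here: the whole proof is essentially a transcription of Theorem \ref{CDSBIsomorphisms} with the order of $c$ and $\sigma(vy)$ swapped throughout, and with centrality of $b^2$ doing the extra bookkeeping in the final multiplicativity check.
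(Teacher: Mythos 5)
Your proof is correct and follows the same approach as the paper, which itself just remarks that the argument is "analogous to Theorem \ref{CDSBIsomorphisms}" because the middle nuclei of $D_r(B,\sigma,c)$ and $D_r(B',\phi,d)$ are again $B$ and $B'$. You have simply made that analogy explicit, correctly identifying the single place where the placement of $c$ matters and where centrality of $b^2$ does the bookkeeping.
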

\begin{proof}
The proof is analogous to Theorem \ref{CDSBIsomorphisms}, as the middle nuclei of $D_r(B,\sigma,c)$ and $D_r(B',\phi,d)$ are equal to $B$ and $B'$ respectively. Due to this, we can construct the isomorphisms in the same way as in the previous proof.
\end{proof}

\begin{corollary}\label{CDSBIsomorphisms 2}Let $D=D(B,\sigma,c)$ (resp. $D_r(B,\sigma,c)$)  and $D'=D(B,\phi,d)$ (resp. $D_r(B,\phi,d)$) be $F$-algebras. Then $G:D\to D'$ is an isomorphism if and only if $G$ has the form $$G(x,y)=(\tau(x),\tau(y)b)$$ for some $F$-isomorphism $\tau\in Aut_F(B)$ such that $\phi\circ\tau=\tau\circ\sigma$ and $\tau(c)=db^2$ for some $b\in F^{\times}$.
\end{corollary}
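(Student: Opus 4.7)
The plan is to derive this corollary as a direct specialization of Theorems \ref{CDSBIsomorphisms} and \ref{CDSBIsomorphismsR} by setting $B' = B$. Both theorems already characterise the $F$-algebra isomorphisms $G : D \to D'$ as maps of the form $G(x,y) = (\tau(x), \tau(y)b)$, where $\tau : B \to B'$ is an $F$-isomorphism satisfying $\phi \circ \tau = \tau \circ \sigma$ and $\tau(c) = db^2$ for some $b \in F^{\times}$. Once we require both algebras to be constructed from the same central simple algebra $B$, the hypothesis ``$\tau : B \to B'$ is an $F$-isomorphism'' reduces to ``$\tau \in Aut_F(B)$'', and the remaining two conditions transfer verbatim.

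No additional computation is needed. The proofs of Theorems \ref{CDSBIsomorphisms} and \ref{CDSBIsomorphismsR} already show that $b$ must lie in $F^{\times}$: the element $G(0,1) = (a,b)$ forces $a, b \in Z(B) = F$ via the comparison of $G(x,0) \cdot G(0,1)$ with $G(0,1) \cdot G(x,0)$, after which the equation $2ab = 0$ together with the assumption $\operatorname{char} F \neq 2$ forces $a = 0$ (ruling out $b = 0$ by the surjectivity of $G$). The converse direction similarly transfers, since the verification that $G$ is $F$-linear, bijective, and multiplicative depends only on the structure of $B$ and not on any distinction between $B$ and a second isomorphic copy.

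Accordingly, the only step to spell out is the terminological one: when $B' = B$, an $F$-isomorphism $\tau : B \to B$ is precisely an element of $Aut_F(B)$. The parenthetical case for $D_r$ is handled identically by invoking Theorem \ref{CDSBIsomorphismsR} in place of Theorem \ref{CDSBIsomorphisms}, since the middle nucleus of $D_r(B,\sigma,c)$ is again $B$ and hence the argument for the shape of $G$ is unchanged. There is no genuine obstacle here; the statement is essentially a restatement of the preceding theorems in the self-map setting.
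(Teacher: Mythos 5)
Your proposal is correct and matches the paper's intent: the corollary is stated without proof precisely because it is the immediate specialization of Theorems \ref{CDSBIsomorphisms} and \ref{CDSBIsomorphismsR} to the case $B' = B$, where an $F$-isomorphism $\tau : B \to B$ is simply an element of $Aut_F(B)$. Nothing further is needed.
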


\begin{corollary} If $c\in F^{\times}$ and $d\in B^{\times}\setminus F$, then $D(B,\sigma,c)$ is not isomorphic to any of $D(B,\sigma,d)$, $D_m(B,\sigma,d)$ or $D_r(B,\sigma,d)$.
\end{corollary}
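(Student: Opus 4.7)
The plan is to split this into three separate non-isomorphism statements and exploit the observation, noted earlier in the section, that the hypothesis $c\in F^\times$ makes the three constructions $D(B,\sigma,c)$, $D_m(B,\sigma,c)$, $D_r(B,\sigma,c)$ identical, while $d\in B^\times\setminus F$ yields three genuinely distinct algebras.

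For the inequalities $D(B,\sigma,c)\not\cong D(B,\sigma,d)$ and $D(B,\sigma,c)\not\cong D_r(B,\sigma,d)$, I would invoke Corollary \ref{CDSBIsomorphisms 2}, rewriting the left-hand side as $D_r(B,\sigma,c)$ in the second case. Any such isomorphism would have to take the form $G(x,y)=(\tau(x),\tau(y)b)$ for some $\tau\in Aut_F(B)$ and $b\in F^\times$ satisfying $\tau(c)=db^2$. Since $\tau$ is $F$-linear and $c\in F$, one has $\tau(c)=c$; hence $d=cb^{-2}\in F$, contradicting the hypothesis $d\in B^\times\setminus F$.

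For $D(B,\sigma,c)\not\cong D_m(B,\sigma,d)$ the excerpt contains no isomorphism classification covering this mixed case, so I would argue via the middle nucleus instead. By Theorem \ref{CSANucleus} (together with the coincidence of the three constructions when $c\in F^\times$), $Nuc_m(D(B,\sigma,c))=B$. By Theorem \ref{CSANucleusM}, $Nuc_m(D_m(B,\sigma,d))=\{k\in B \mid \sigma(k)d=d\sigma(k)\}$, i.e.\ the preimage under $\sigma$ of the centralizer of $d$ in $B$. Because $d\notin F=Z(B)$, this centralizer is a proper $F$-subspace of $B$, and since $\sigma$ is an $F$-linear bijection, the middle nucleus of $D_m(B,\sigma,d)$ has strictly smaller $F$-dimension than $B$. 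Algebra isomorphisms preserve middle nuclei, so this dimension mismatch rules out the isomorphism.

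The main obstacle is precisely the $D_m$ case: the paper's isomorphism classifications (Theorem \ref{CDSBIsomorphisms}, Theorem \ref{CDSBIsomorphismsR}, Corollary \ref{CDSBIsomorphisms 2}) never mix constructions, so a direct classification-style contradiction is unavailable. The middle-nucleus invariant is the natural substitute, since it is exactly the piece of structure that distinguishes $D_m$ from $D$ and $D_r$ as soon as the scaling element leaves the centre $F$.
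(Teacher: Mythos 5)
Your proof is correct and matches the paper's argument exactly: Corollary \ref{CDSBIsomorphisms 2} (with $\tau(c)=c$ since $c\in F^{\times}$ and $\tau$ is an $F$-automorphism) forces $d\in F^{\times}$ for the $D$ and $D_r$ cases, and the middle-nucleus comparison from Theorems \ref{CSANucleus} and \ref{CSANucleusM} rules out $D_m$. The only difference is that you spell out two steps the paper leaves implicit, namely the identification $D(B,\sigma,c)=D_r(B,\sigma,c)$ before invoking the corollary and the dimension count showing the middle nuclei have distinct $F$-dimensions.
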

\begin{proof}
If $D(B,\sigma,c)$ is isomorphic to one of $D(B,\sigma,d)$ or $D_r(B,\sigma,d)$, by Corollary \ref{CDSBIsomorphisms 2} there must exist some $b\in F^{\times}$ such that $c=db^2$. This implies $d=cb^{-2}\in F^{\times}$, which is a contradiction.\\
Finally, if $D_m(B,\sigma,d)\cong D(B,\sigma,c)$, then the middle nuclei of the two algebras must be isomorphic; that is, $B\cong \lbrace k\in B\mid \sigma(k)d=d\sigma(k)\rbrace$. This is satisfied if and only if $d\in F^{\times}$, contradicting our assumption.
\end{proof}

Note that we cannot use an analogous proof to the one in Theorem \ref{CDSBIsomorphisms} to determine the isomorphisms of $D_m(B,\sigma,c)$, as the middle nucleus is not equal to $B$. We obtain some weaker results:

\begin{lemma}If $Fix(\sigma)\not\cong Fix(\phi)$, then $D_m(B,\sigma,c)\not\cong D_m(B',\phi,d)$ for any choice of $c\in B^{\times}$ and $d\in B'^{\times}$.
\end{lemma}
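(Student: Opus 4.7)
The plan is to imitate the proof of Lemma \ref{IsomorphismRestrictsToFixFields} almost verbatim, but replace the centre by the left (or, equivalently, the right) nucleus. The reason the centre worked in the commutative case was that $Z(D(K,\sigma,c)) = Fix(\sigma)$, so the fixed field appeared as an isomorphism invariant of the algebra. For $D_m(B,\sigma,c)$ the centre is just $F$ and therefore carries no information about $\sigma$, so it is useless for distinguishing algebras. However, Theorem \ref{CSANucleusM} tells us that $Nuc_l(D_m(B,\sigma,c)) = Fix(\sigma)$ and $Nuc_r(D_m(B,\sigma,c)) = Fix(\sigma)$, so the fixed field reappears as a one-sided nucleus and we can use that instead.

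Concretely, I would suppose that $G : D_m(B,\sigma,c) \to D_m(B',\phi,d)$ is an $F$-algebra isomorphism and argue as follows. The left nucleus is defined purely by the vanishing of certain associators, and associators are preserved by algebra homomorphisms, so $G$ maps $Nuc_l(D_m(B,\sigma,c))$ isomorphically onto $Nuc_l(D_m(B',\phi,d))$. Combining this with Theorem \ref{CSANucleusM} applied to both sides yields an $F$-algebra isomorphism $Fix(\sigma) \cong Fix(\phi)$, which contradicts the hypothesis. Taking the contrapositive gives the statement.

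There is no real obstacle here; the entire content of the argument is the observation that, although the centre of $D_m(B,\sigma,c)$ collapses to $F$, the fixed field still survives as an invariant, namely as the common value of the left and right nuclei. This is exactly the same formal move as in Lemma \ref{IsomorphismRestrictsToFixFields}, and the proof can be written in two or three sentences once Theorem \ref{CSANucleusM} is invoked.
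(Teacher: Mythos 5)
Your proof is correct and matches the paper's argument essentially verbatim: the paper likewise observes that an $F$-algebra isomorphism must carry the left nucleus onto the left nucleus, and then invokes Theorem \ref{CSANucleusM} to identify $Nuc_l(D_m(B,\sigma,c))=Fix(\sigma)$ on both sides. Your added remark explaining why the centre is useless here (it collapses to $F$) is a nice bit of motivation, but the formal content is the same.
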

\begin{proof}
If $D_m(B,\sigma,c)\cong D_m(B',\phi,d)$, the left nucleus of $D_m(B,\sigma,c)$ is mapped isomorphically to the left nucleus of $D_m(B',\phi,d)$. By Lemma \ref{CSANucleusM}, this implies $Fix(\sigma)\cong Fix(\phi)$.
\end{proof}

\begin{theorem}\label{CDSBIsomorphismsR}
Let $D=D_m(B,\sigma,c)$ and $D'=D_m(B',\phi,d)$ be $F$-algebras. If $\tau:B\to B'$ is an $F$-isomorphism such that $\phi\circ\tau=\tau\circ\sigma$ and $\tau(c)=db^2$ for some $b\in F^{\times}$, there is an isomorphism $G:D\to D'$ given by $G(x,y)=(\tau(x),\tau(y)b)$ for all $x,y\in B$.
\end{theorem}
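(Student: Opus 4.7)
The plan is to verify that the proposed map $G(x,y)=(\tau(x),\tau(y)b)$ is an $F$-algebra isomorphism from $D_m(B,\sigma,c)$ to $D_m(B',\phi,d)$. $F$-linearity and bijectivity are immediate from the fact that $\tau$ is an $F$-isomorphism of vector spaces and $b\in F^{\times}$, so right-multiplication by $b$ is an $F$-linear bijection on $B'$; unitality is automatic since $b\in F$ and $\tau(1)=1$ only affect the first component. The entire content of the theorem therefore reduces to checking multiplicativity, i.e. the identity $G((u,v)(x,y))=G(u,v)G(x,y)$ for all $u,v,x,y\in B$.

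To carry this out, I would expand both sides using the middle multiplication. On the left, $G((u,v)(x,y))$ gives
\begin{equation*}
(\tau(ux)+\tau(\sigma(v))\tau(c)\tau(\sigma(y)),\ \tau(uy)b+\tau(vx)b),
\end{equation*}
and on the right, $(\tau(u),\tau(v)b)(\tau(x),\tau(y)b)$ expands, via the $D_m$-multiplication on $B'\oplus B'$, to
\begin{equation*}
(\tau(u)\tau(x)+\phi(\tau(v)b)\,d\,\phi(\tau(y)b),\ \tau(u)\tau(y)b+\tau(v)b\tau(x)).
\end{equation*}

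The second components agree immediately once one observes that $b\in F^{\times}\subset Z(B')$, so $b$ commutes with $\tau(x)$. For the first components, the hypothesis $\phi\circ\tau=\tau\circ\sigma$ replaces $\tau\circ\sigma$ by $\phi\circ\tau$, turning the left-hand term into $\phi(\tau(v))\tau(c)\phi(\tau(y))$, and the right-hand term becomes $\phi(\tau(v))\phi(b)\,d\,\phi(\tau(y))\phi(b)$. Since $b\in F^{\times}$, we have $\phi(b)=b$ and $b$ lies in the centre of $B'$, so this simplifies to $\phi(\tau(v))\,(db^2)\,\phi(\tau(y))$. The assumption $\tau(c)=db^2$ then yields equality of the two first components.

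The only mild subtlety is the combined use of centrality of $b$ and $\phi$-invariance of $F$; both facts rest on $b\in F^\times$ being central. No further obstacles are anticipated since, unlike the forward direction (where one must show every isomorphism has this shape using nucleus arguments that fail for $D_m$), here we merely verify that a candidate formula works.
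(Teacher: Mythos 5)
Your proposal is correct and follows essentially the same route as the paper: both verify that $G$ is an $F$-linear bijection and then expand $G((u,v)(x,y))$ and $G(u,v)G(x,y)$ under the $D_m$ multiplication, using that $b\in F^\times$ is central and $\phi$-fixed together with $\phi\circ\tau=\tau\circ\sigma$ and $\tau(c)=db^2$ to match the first components. You are somewhat more explicit than the paper about isolating exactly where centrality and $\phi$-invariance of $b$ enter, but the argument is the same.
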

\begin{proof}
Clearly this is an $F$-vector space isomorphism from $B\oplus B$ to $B'\oplus B'$ as it is additive, bijective and $F$-linear. To show this map is multiplicative and thus an $F$-algebra isomorphism, we consider $G(u,v)G(x,y)=G((u,v)(x,y)).$ This is equivalent to $$\tau(u)\tau(x)+\phi(\tau(v)b)d\phi(\tau(y)b)=\tau(ux+\sigma(v)c\sigma(y)),\ \tau(u)\tau(y)b+\tau(v)b\tau(x)=\tau(uy+vx)b.$$ As $b\in F^{\times}$, this is equivalent to simply considering $\phi(\tau(v))db^2\phi(\tau(y))=\tau(\sigma(v))\tau(c)\tau(\sigma(y))$. Substituting $\tau(c)=db^2$, we conclude that this is satisfied for all $v,y\in B$ as we assumed $\phi\circ\tau=\tau\circ\sigma$. Hence $G:D\to D'$ is a $F$-algebra isomorphism.
\end{proof}

%\begin{corollary}  Let $\sigma\in Aut_F(B)$ and $\phi\in Aut_F(B')$. If there exists an $F$-isomorphism $\tau:B\to B'$ such that $\phi\circ\tau=\tau\circ\sigma$ and $\tau(c)=db^2$ for some $b\in F^{\times}$, $c\in B^{\times}$ and $d\in B'^{\times}$, then $Fix(\phi)\cong Fix(\sigma)$.
%\end{corollary}
%\begin{proof}
%Analogous to the proof of Corollary \ref{CDSFixfields}.
%\end{proof}

%\begin{corollary}Let $D=D(B,\sigma,c)$ and $D'=D(B,\sigma,d)$ be $F$-algebras. Then $G:D\to D'$ is an isomorphism if and only if $G$ has the form $$G(x,y)=(\tau(x),\tau(y)b)$$ for some $F$-isomorphism $\tau\in Aut_F(B)$ such that $\tau\in C(\sigma)$ and $\tau(c)=db^2$ for some $b\in F^{\times}$.
%\end{corollary}
%
%\begin{corollary}Let $D=D(B,\sigma,c)$ and $D'=D(B,\phi,c)$ be $F$-algebras. Then $G:D\to D'$ is an isomorphism if and only if $G$ has the form $$G(x,y)=(\tau(x),\tau(y)b)$$ for some $F$-isomorphism $\tau\in Aut_F(B)$ such that $\phi\circ\tau=\tau\circ\sigma$ and $\tau(c)=cb^2$ for some $b\in F^{\times}$.
%\end{corollary}

\subsection{Automorphisms}

\begin{theorem}\label{CDSBAutomorphisms}
Let $D=D(B,\sigma, c)$ (resp. $D=D_r(B,\sigma,c)$). All automorphisms $G:D\to D$ are of the form $$G(u,v)=(\tau(u),\tau(v)b)$$ for some $\tau\in Aut_F(B)$ such that $\tau\in C(\sigma)$ and $b\in F^{\times}$ satisfying $\tau(c)=cb^2$. Further, all maps of this form with $\tau\in Aut_F(B)$ and $b\in F^{\times}$ satisfying these conditions yield automorphisms of $D$.
\end{theorem}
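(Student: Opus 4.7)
The plan is to mirror the proof strategy of Theorem \ref{CDSAutGroup} (the automorphism theorem for commutative Dickson algebras), incorporating the observation from the proof of Theorem \ref{CDSBIsomorphisms} that the relevant scalar $b$ must be central in $B$. First, I would use the fact that any $F$-algebra automorphism preserves nuclei. By Theorem \ref{CSANucleus}, $Nuc_m(D) = B$, so $G$ restricted to $B \oplus 0$ gives an $F$-algebra map $B \to B$, which must in fact be an element $\tau \in Aut_F(B)$. Thus $G(x,0) = (\tau(x),0)$ for all $x \in B$.

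Next, writing $G(0,1) = (a,b)$ for some $a,b \in B$, and computing $G(x,y)$ in two ways, namely as $G(x,0) + G(0,1)G(y,0)$ and as $G(x,0) + G(y,0)G(0,1)$, I would obtain the two expressions $G(x,y) = (\tau(x) + a\tau(y), \tau(y)b)$ and $G(x,y) = (\tau(x) + \tau(y)a, b\tau(y))$. Equating the components forces $a$ and $b$ to commute with every element of $B$, hence $a,b \in Z(B) = F$. This is the step where the central simple case diverges from the field case of Theorem \ref{CDSAutGroup} and yields the crucial conclusion $b \in F^{\times}$ asserted in the theorem.

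Then I would apply multiplicativity to $(0,1)^2 = (c,0)$. Computing $G(0,1)^2 = (a,b)(a,b)$ and equating with $G(c,0) = (\tau(c),0)$ gives $a^2 + c\sigma(b^2) = \tau(c)$ and $ab + ba = 0$. Since $b \in F$ we have $\sigma(b^2) = b^2$, reducing the first equation to $a^2 + cb^2 = \tau(c)$, and since $a,b \in F$ with $\mathrm{char}(F) \neq 2$ the second equation forces $a = 0$ or $b = 0$. The case $b = 0$ would force $G$ into $B \oplus 0$, contradicting surjectivity, so $a = 0$ and $\tau(c) = cb^2$, with $b \in F^{\times}$.

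Finally, the full multiplicativity condition $G(u,v)G(x,y) = G((u,v)(x,y))$ reduces, after cancelling the second components and using $b \in F$, to $c\sigma(\tau(vy))b^2 = \tau(c\sigma(vy))$. Substituting $\tau(c) = cb^2$ and cancelling yields $\sigma \circ \tau = \tau \circ \sigma$, i.e.\ $\tau \in C(\sigma)$. The converse direction is a direct verification that any $G$ of the prescribed form is an $F$-linear bijection and preserves the multiplication, using that $b$ is central in $B$ and fixed by $\sigma$. The case $D = D_r(B,\sigma,c)$ follows by the same argument, with the only modification occurring in the multiplicativity computation, where $c\sigma(vy)$ is replaced by $\sigma(vy)c$; since $b \in F$ commutes with $c$ and $\sigma$ fixes $b$, the algebra manipulations go through identically. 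I expect the main point of care to be step two, verifying that the two expressions for $G(x,y)$ really do force $a,b \in Z(B)$; this is what distinguishes the central simple setting from the commutative Dickson setting and is what restricts $b$ to $F^{\times}$ rather than to $B^{\times}$.
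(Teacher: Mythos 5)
Your proposal reproduces the paper's proof essentially step for step: identify $\tau = G\restriction_B$ via preservation of $Nuc_m(D)=B$, derive $a,b\in Z(B)=F$ from the two decompositions $(x,y)=(x,0)+(0,1)(y,0)=(x,0)+(y,0)(0,1)$, use $(0,1)^2=(c,0)$ and surjectivity to force $a=0$ and $\tau(c)=cb^2$, then extract $\tau\in C(\sigma)$ from full multiplicativity, with a routine converse check and the observation that $D_r$ goes through identically. You also correctly flag the $a,b\in Z(B)$ step as the one genuinely new ingredient relative to Theorem \ref{CDSAutGroup}, which is exactly where the paper's argument diverges from the field case.
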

\begin{proof}
Suppose that $G:D\to D$ is an $F$-automorphism. Then $G$ restricts to an automorphism of the middle nucleus of $D$. This means that $G$ restricted to $B$ must be an automorphism of $B$; that is, $G\!\mid_B=\tau\in Aut_F(B)$, so we have $G(x,0)=(\tau(x),0)$ for all $x\in B$.\\
Let $G(0,1)=(a,b)$ for some $a,b\in B$. Then we have $$G(x,y)=G(x,0)+G(0,1)G(y,0)=(\tau(x)+a\tau(y),\tau(y)b),$$ and $G(x,y)=G(x,0)+G(y,0)G(0,1)=(\tau(x)+\tau(y)a,b\tau(y)).$
This implies that $a,b\in Z(B')=F$.\\
As $G$ is multiplicative, we must also have $G((0,1)^2)=G(0,1)^2$ which holds if and only if $(a,b)(a,b)=(\tau(c),0).$ From this, we obtain the equations $a^2+c\phi(b^2)=\tau(c)$ and
$ab+ba=0.$ Since we have $a,b\in F$, this simplifies to $a^2+cb^2=\tau(c)$ and $2ab=0.$ As $F$ does not have characteristic 2, this implies either $a=0$ or $b=0$. If $b=0$, then $G(x,y)=(\tau(x)+\tau(y)a,0)$ and so $G$ is not surjective. This is a contradiction, as $G$ is an automorphism. Thus we conclude $a=0$ and $cb^2=\tau(c)$.\\
Finally, as $G$ is multiplicative we have $G(u,v)G(x,y)=G((u,v)(x,y))$ for all $u,v,x,y\in K$. When $D=D(B,\sigma,c)$, this yields $$(\tau(ux)+c\sigma(\tau(v)b\tau(y)b),\tau(uy)b+\tau(v)b\tau(x))=(\tau(ux+c\sigma(vy)),\tau(uy+vx)b)$$ for all $u,v,x,y\in K$. As $b\in F$, this implies we must have $cb^2\sigma(\tau(vy))=\tau(c\sigma(vy)).$ After substituting the condition $\tau(c)=c\phi(b^2)$, we get $\sigma\circ\tau=\tau\circ\sigma.$ This follows almost identically for $D_r(B,\sigma,c)$.\\
Conversely, let $G:B\oplus B\to B\oplus B$ be defined by $G(x,y)=(\tau(x),\tau(y)b)$ for some $F$-automorphism $\tau:B\to B$ such that $\sigma\circ\tau=\tau\circ\sigma$ and $\tau(c)=cb^2$ for some $b\in F^{\times}$. It is easily checked that this in fact gives an $F$-algebra automorphism of $D$.
\end{proof}

\begin{corollary}
Let $D=D(B,\sigma, c)$ (resp. $D=D_r(B,\sigma,c)$). There is a subgroup of $Aut_F(D)$ isomorphic to $$\lbrace\tau\in Aut_F(B)\mid \tau(c)=c \mbox{ and } \tau\circ\sigma=\sigma\circ\tau\rbrace.$$
\end{corollary}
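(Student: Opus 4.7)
The plan is to follow the argument of Corollary \ref{CDSAutSubgroupb1} verbatim, now appealing to Theorem \ref{CDSBAutomorphisms} in place of Theorem \ref{CDSAutGroup}. By that theorem, every $F$-algebra automorphism of $D = D(B,\sigma,c)$ (respectively $D_r(B,\sigma,c)$) is of the form $G_{\tau,b}(u,v)=(\tau(u),\tau(v)b)$ for some $\tau\in Aut_F(B)$ commuting with $\sigma$ and some $b\in F^\times$ with $\tau(c)=cb^2$, and conversely every such pair $(\tau,b)$ gives rise to an automorphism.

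First I would specialise to the choice $b=1$. This reduces the pair of conditions on $\tau$ to $\tau(c)=c$ and $\tau\circ\sigma=\sigma\circ\tau$, and by the converse direction of Theorem \ref{CDSBAutomorphisms} the corresponding map $G_\tau(u,v):=(\tau(u),\tau(v))$ is indeed an element of $Aut_F(D)$. Let $H$ denote the set of all such $G_\tau$. Next I would define
$$\Phi:\{\tau\in Aut_F(B)\mid \tau(c)=c,\ \tau\circ\sigma=\sigma\circ\tau\}\longrightarrow Aut_F(D), \qquad \Phi(\tau)=G_\tau,$$
and verify the three routine checks: (a) the source is a subgroup of $Aut_F(B)$, being the intersection of the centralizer $C(\sigma)$ and the isotropy group $Aut_F(B)_c$; (b) $\Phi$ is a group homomorphism, since $G_{\tau_1}\circ G_{\tau_2}(u,v)=(\tau_1\tau_2(u),\tau_1\tau_2(v))=G_{\tau_1\circ\tau_2}(u,v)$; (c) $\Phi$ is injective, because $G_\tau=G_{\tau'}$ forces $\tau(u)=\tau'(u)$ for all $u\in B$. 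The image is then a subgroup of $Aut_F(D)$ isomorphic to the stated group.

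There is no substantive obstacle here; the whole argument is a direct specialisation of Theorem \ref{CDSBAutomorphisms}. The only thing to remark is that the proof applies uniformly to both $D(B,\sigma,c)$ and $D_r(B,\sigma,c)$, since Theorem \ref{CDSBAutomorphisms} describes the automorphism groups of both algebras by the same formula.
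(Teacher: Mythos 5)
Your argument is correct and is exactly the specialisation $b=1$ of Theorem \ref{CDSBAutomorphisms} that the paper intends, mirroring the proof of Corollary \ref{CDSAutSubgroupb1}. The extra checks that $\Phi$ is an injective homomorphism are routine but make explicit what the paper leaves implicit.
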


In order to describe the number of automorphisms of $D(B,\sigma,c)$ and $D_r(B,\sigma,c)$, we introduce a slightly different version of the group $J(c)$: $$J_F(c)=\lbrace \tau\in Aut_F(B)\mid X^2-\tau(c)c^{-1} \mbox{ has solutions in }F\rbrace\subset Aut_F(B).$$ Similarly to $J(c)$, this forms a subgroup of $Aut_F(B)$. The proof of this follows identically to the proof of Theorem \ref{JcSubgroup}.

\begin{theorem}
There are exactly $2\left|J_F(c)\cap C(\sigma)\right|$ automorphisms of $D(B,\sigma,c)$ (respectively $D_r(B,\sigma,c)$), each of which is given by the automorphisms $G(x,y)=(\tau(x),\tau(y)b_i)$ for each $\tau\in J_F(c)\cap C(\sigma)$, where $b_i\in F$ are the two solutions of $X^2-\tau(c)c^{-1}$ for $i=1,2$.
\end{theorem}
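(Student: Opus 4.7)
The plan is to combine the parametrisation of automorphisms from Theorem \ref{CDSBAutomorphisms} with a direct counting argument, in close analogy with Theorem \ref{CDSExactNumberOfAutomorphisms}. By Theorem \ref{CDSBAutomorphisms}, every automorphism $G$ of $D=D(B,\sigma,c)$ (resp.\ $D_r(B,\sigma,c)$) has the shape $G(u,v)=(\tau(u),\tau(v)b)$ where $\tau\in Aut_F(B)$ lies in $C(\sigma)$ and $b\in F^{\times}$ satisfies $\tau(c)=cb^{2}$; conversely every such pair $(\tau,b)$ produces an automorphism. So the task reduces to counting admissible pairs $(\tau,b)$.

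First I would rewrite the constraint on $b$: since $b\in F^{\times}\subseteq Z(B)$ is central in $B$, the equation $\tau(c)=cb^{2}$ is equivalent to $b^{2}=\tau(c)c^{-1}$. In particular, the existence of such a $b$ forces $\tau(c)c^{-1}\in F$, and conversely, once $\tau(c)c^{-1}\in F$ is a square in $F$, any square root provides a valid $b$. Thus $\tau\in C(\sigma)$ contributes automorphisms of the required form if and only if the polynomial $X^{2}-\tau(c)c^{-1}$ has a root in $F$, i.e.\ if and only if $\tau\in J_F(c)$. The admissible automorphism-defining $\tau$'s therefore form the set $J_F(c)\cap C(\sigma)$, which is a subgroup of $Aut_F(B)$ since $J_F(c)$ is a subgroup (by the analogue of Lemma \ref{JcSubgroup} noted in the excerpt just before the statement) and $C(\sigma)$ is a subgroup.

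Next I would count the $b$'s for a fixed $\tau\in J_F(c)\cap C(\sigma)$. The polynomial $X^{2}-\tau(c)c^{-1}\in F[X]$ is separable because $\mathrm{char}(F)\neq 2$, so it has exactly two roots $b_{1},b_{2}\in F^{\times}$, with $b_{2}=-b_{1}$. Each choice of root gives a genuinely different automorphism: from $G$ we recover $\tau$ as the restriction $G\!\mid_{B}$ (identifying $B$ with $B\oplus 0$), and we recover $b$ from $G(0,1)=(0,b)$, so the parametrisation $(\tau,b_{i})\mapsto G$ is injective. Multiplying the $|J_F(c)\cap C(\sigma)|$ choices of $\tau$ by the two choices of $b_{i}$ for each $\tau$, we obtain exactly $2|J_F(c)\cap C(\sigma)|$ automorphisms, with the description stated in the theorem. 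The case $D_r(B,\sigma,c)$ is handled verbatim, since Theorem \ref{CDSBAutomorphisms} produces the identical description of the automorphism group in that setting.

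There is no real obstacle in this argument: the heavy lifting is already done in Theorem \ref{CDSBAutomorphisms}; the remaining content is the bookkeeping observation that the set of valid $\tau$ is precisely the subgroup $J_F(c)\cap C(\sigma)$, together with the straightforward fact that each such $\tau$ admits exactly two choices of $b$ over a field of characteristic not $2$.
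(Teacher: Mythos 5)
Your proposal is correct and follows essentially the same route as the paper: the paper's own proof is a one-line reference to the argument of Theorem \ref{CDSExactNumberOfAutomorphisms} with $b_i$ required to lie in $F^\times$, and your write-up simply spells out that argument — using Theorem \ref{CDSBAutomorphisms} to parametrise automorphisms by pairs $(\tau,b)$, observing that admissible $\tau$'s are exactly those in $J_F(c)\cap C(\sigma)$, and counting two choices of $b$ per $\tau$ since $\operatorname{char} F\neq 2$. The added remarks (centrality of $b$ making $\tau(c)=cb^2$ equivalent to $b^2=\tau(c)c^{-1}$, and injectivity of $(\tau,b)\mapsto G$ via $G\!\mid_B$ and $G(0,1)$) are correct and merely make explicit what the paper leaves implicit.
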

\begin{proof}
The proof follows analogously to the proof of Theorem \ref{CDSExactNumberOfAutomorphisms}, apart from requiring that $b_i\in F^{\times}$. This is due to the constraints determined in Theorem \ref{CDSBAutomorphisms}.
\end{proof}

\begin{corollary} If $c\in F^{\times}$, then there are exactly $2\left|C(\sigma)\right|$ automorphisms of $D(B,\sigma,c)$, each of which is given by the automorphisms $G(x,y)=(\tau(x),\pm\tau(y))$ for each $\tau\in C(\sigma)$.
\end{corollary}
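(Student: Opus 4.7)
The plan is to apply the previous theorem, which gives exactly $2\left|J_F(c)\cap C(\sigma)\right|$ automorphisms each of the prescribed form, and show that in the case $c\in F^{\times}$ the group $J_F(c)$ collapses to all of $Aut_F(B)$, so that $J_F(c)\cap C(\sigma)=C(\sigma)$.

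First I would observe that any $\tau\in Aut_F(B)$ fixes $F$ pointwise, since $F$ is the centre of $B$ and automorphisms preserve the centre (and act trivially on it because $\tau$ is $F$-linear). Consequently, for $c\in F^{\times}$ we have $\tau(c)c^{-1}=cc^{-1}=1$ for every $\tau\in Aut_F(B)$, so the polynomial $X^2-\tau(c)c^{-1}=X^2-1$ always splits in $F$ with roots $\pm 1$. This shows $J_F(c)=Aut_F(B)$, and therefore $J_F(c)\cap C(\sigma)=C(\sigma)$.

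Plugging this into the previous theorem gives exactly $2\left|C(\sigma)\right|$ automorphisms of $D(B,\sigma,c)$. Moreover, for each $\tau\in C(\sigma)$ the associated pair of solutions $b_i\in F^{\times}$ of $X^2-\tau(c)c^{-1}=X^2-1$ are precisely $b_1=1$ and $b_2=-1$, so the corresponding automorphisms are $G(x,y)=(\tau(x),\tau(y))$ and $G(x,y)=(\tau(x),-\tau(y))$, which is exactly the stated form $G(x,y)=(\tau(x),\pm\tau(y))$.

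There is no real obstacle here: the only thing to verify carefully is that $\tau(c)=c$ for $c\in F^{\times}$, which follows from the fact that $\tau$ is an $F$-algebra automorphism. Once this is in place, the result is an immediate specialisation of the previous theorem.
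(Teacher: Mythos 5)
Your proof is correct and takes essentially the same approach as the paper: the paper's proof (which defers to the analogous Corollary~\ref{CDSCSigmaAuts}) likewise observes that $\tau(c)=c$ for all $\tau\in Aut_F(B)$ when $c\in F^{\times}$, so $X^2-\tau(c)c^{-1}=X^2-1$ has roots $\pm 1\in F$, whence $J_F(c)=Aut_F(B)$ and the count follows from the preceding theorem.
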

\begin{proof}
This follows similarly to Corollary \ref{CDSCSigmaAuts}.
\end{proof}

An integral part of the proof given in Theorem \ref{CDSBAutomorphisms} is that one of the nuclei of these algebras must be equal to $B$ and so any automorphism of $D(B,\sigma,c)$ must restrict to an automorphism of $B$. For $D_m(B,\sigma,c)$ with $c\not\in F^{\times}$, $B$ is not equal to any of the nuclei so we cannot make this deduction. However, if we assume that an automorphism of $D_m(B,\sigma, c)$ restricts to an automorphism of $B$, then it must be of the same form as the automorphisms of the other Dickson algebras:

\begin{theorem}
Let $D=D_m(B,\sigma,c)$ and suppose $G$ is an automorphism which restricts to an automorphism of $B$. Then $$G(u,v)=(\tau(u),\tau(v)b)$$ for some $\tau\in Aut_F(B)$ such that $\tau\in C(\sigma)$ and $b\in F^{\times}$ satisfying $\tau(c)=cb^2$.
\end{theorem}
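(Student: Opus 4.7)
The plan is to mirror the proof of Theorem~\ref{CDSBAutomorphisms}, with the hypothesis that $G$ restricts to an automorphism of $B$ replacing the middle-nucleus argument used there. Concretely, set $\tau = G\!\mid_B \in Aut_F(B)$, so $G(x,0) = (\tau(x),0)$ for all $x\in B$, and write $G(0,1) = (a,b)$ for some $a,b\in B$.

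First I would pin down $a$ and $b$. Using the middle multiplication one checks $(0,1)\circ(y,0) = (y,0)\circ(0,1) = (0,y)$, so applying $G$ and expanding via the two products gives $(a\tau(y),b\tau(y)) = (\tau(y)a,\tau(y)b)$ for all $y\in B$. Since $\tau$ is surjective, this forces $a,b\in Z(B) = F$.

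Next, multiplicativity on $(0,1)^2 = (\sigma(1)c\sigma(1),0) = (c,0)$ yields, after using $\sigma(b)=b$ for $b\in F$, the equations $a^2 + b^2 c = \tau(c)$ and $2ab = 0$. As $\mathrm{char}\,F \neq 2$, either $a = 0$ or $b = 0$; the case $b = 0$ would give $G(x,y) = (\tau(x)+\tau(y)a,0)$, contradicting surjectivity. So $a = 0$ and $\tau(c) = cb^2$ with $b\in F^{\times}$, and $G(x,y) = (\tau(x),\tau(y)b)$.

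The last and main step is to extract $\tau \in C(\sigma)$. Comparing first components of $G(u,v)\circ G(x,y)$ and $G((u,v)\circ(x,y))$, using $b\in F$ (so $\sigma(b)=b$ and $b^2$ is central) and substituting $\tau(c) = cb^2$ reduces the equation to
\[
\sigma\tau(v)\,c\,\sigma\tau(y) \;=\; \tau\sigma(v)\,c\,\tau\sigma(y) \qquad \text{for all } v,y\in B.
\]
Specialising to $y = 1$ and using $c\in B^{\times}$ to cancel $c$ on the right gives $\sigma\tau(v) = \tau\sigma(v)$ for every $v\in B$, i.e.\ $\tau\in C(\sigma)$. The converse direction is not asserted in the statement, so no check is required there. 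The genuinely new ingredient compared with Theorem~\ref{CDSBAutomorphisms} is precisely the starting hypothesis on $G\!\mid_B$: since $B$ is not any nucleus of $D_m(B,\sigma,c)$ when $c\notin F^{\times}$, one cannot automatically conclude that $G$ preserves $B\oplus 0$, and this is the only obstacle to running the previous proof verbatim.
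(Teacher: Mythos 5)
Your proof is correct and takes the same route the paper gestures at: the paper's proof of this theorem simply says it follows analogously to Theorem~\ref{CDSBAutomorphisms}, with the hypothesis that $G$ restricts to an automorphism of $B$ replacing the middle-nucleus argument, and your write-up carries out exactly that program with the $D_m$ multiplication — pinning $a,b$ in $Z(B)=F$ via the two expansions of $G(0,y)$, using $(0,1)^2=(c,0)$ to get $a=0$ and $\tau(c)=cb^2$, and then extracting $\tau\in C(\sigma)$ from the first-component multiplicativity by specialising $y=1$ and cancelling $c$.
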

\begin{proof}
The proof follows analogously to Theorem \ref{CDSBAutomorphisms} as $G$ restricts to an automorphism of $B$.
\end{proof}

\printbibliography

\end{document}